\documentclass[reqno,onecolumn,letterpaper,12pt]{amsart}
\usepackage{amsmath,amssymb,fullpage,calc,enumerate}
\usepackage[ruled]{algorithm2e}
\usepackage{graphicx,psfrag,multicol}
\usepackage{rotating}
\usepackage{enumitem}
\usepackage{comment}
\usepackage[usenames]{color}
\usepackage{multicol}
\setcounter{MaxMatrixCols}{10}
\usepackage{tabularx}
\usepackage{booktabs}
\usepackage{comment}
\usepackage{tikz}
\usepackage{hyperref}

\DeclareGraphicsExtensions{.jpg,.eps,.mps,.png}
\newtheorem{theorem}{Theorem}
\newtheorem{lemma}[theorem]{Lemma}
\newtheorem{prop}[theorem]{Proposition}
\newtheorem{corollary}[theorem]{Corollary}

\newtheorem*{rem}{Remark}
\theoremstyle{definition}
\newtheorem{definition}{Definition}

\def\NN{\mathbb{N}}
\def\r{\mathbb{R}}

\def\0{\bf 0}

\def\f{\mathcal{F}}

\newcommand{\ra}{\rightarrow}
\newcommand{\diag}{\operatorname{diag}}
\def\base{\mathcal{B}}
\def\M{\mathcal{M}}
\def\inds{\mathcal{I}}
\newcommand{\udot}{\mathbin{\dot{\cup}}}
\newcommand{\rank}{\operatorname{rk}}
\newcommand{\kk}{k}
\newcommand{\clump}{\operatorname{Cp}}

\DeclareMathOperator*{\argmin}{\arg\!\min}
\newcommand{\transp}[1]{#1^{T}}
\newcommand{\copic}{\textsc{COPIC}}
\newcommand{\lcop}{\textsc{LCOP}}
\newcommand{\mplcop}{\textsc{MPLCOP}}
\newcommand{\umat}[2]{\mathcal{U}_{#1}^{#2}}
\newcommand{\pmatch}{\mathcal{PM}}
\newcommand{\paths}{\mathcal{P}}
\newcommand{\cuts}{\mathcal{CUT}}
\newcommand{\runtime}{\operatorname{T}}

\tikzstyle{vertex}=[circle,fill=black!25,minimum size=3pt,inner sep=0pt]
\tikzstyle{selected vertex} = [vertex, fill=red!24]
\tikzstyle{edge} = [draw,thick,-]
\tikzstyle{weight} = [font=\tiny]
\tikzstyle{selected edge} = [draw,line width=1pt,-,red!50]
\tikzstyle{ignored edge} = [draw,line width=1pt,-,black!20]
\newcommand{\setzo}{\{0,1\}}

\begin{document}

\title{Combinatorial Optimization problems with interaction costs: complexity and solvable cases}

\author{Stefan Lendl}
\author{Ante \'Custi\'c}
\author{Abraham P. Punnen}

\address{Department of Mathematics, Simon Fraser University Surrey, Central
City, 250-13450 102nd AV, Surrey, British Columbia,V3T 0A3, Canada}
\email{apunnen@sfu.ca, acustic@sfu.ca}

\address{Institute of Discrete Mathematics, Graz University of Technology, 
\newline Steyrergasse 30, 8010 Graz, Austria}
\email{lendl@math.tugraz.at}

\begin{abstract}
We introduce and study the combinatorial optimization problem with interaction costs (COPIC). 
COPIC is the problem of finding two combinatorial structures, one
from each of two given families, such that the sum of their independent linear
costs and the interaction costs between elements of the two selected structures
is minimized. 
COPIC generalizes the quadratic assignment problem and many other well studied
combinatorial optimization problems, and hence covers many real world
applications.
We show how various topics from different areas in the literature can be
formulated as special cases of COPIC. 
The main contributions of this paper are results on the computational complexity
and approximability of COPIC for different families of combinatorial structures
(e.g.\@ spanning trees, paths, matroids), and special structures of the
interaction costs. More specifically, we analyze the complexity if the
interaction cost matrix is parameterized by its rank and if it is a diagonal
matrix.
Also, we determine the structure of the intersection cost matrix, such that
COPIC is equivalent to independently solving linear optimization problems for
the two given families of combinatorial structures.
\end{abstract}

\keywords{Interaction cost; quadratic combinatorial optimization; complexity;
parametric optimization; parameterized complexity; fixed-rank matrix; linearization}

\thanks{This work was supported by NSERC discovery grant and an NSERC discovery accelerator supplement award awarded to Abraham P. Punnen}
\thanks{Stefan Lendl is supported by the Austrian Science Fund (FWF): W1230}

\maketitle

\section{Introduction}

Let a family $\f_1$ of subset of  $[m]=\{1,2,\ldots ,m\}$, and a family  $\f_2$ of subsets of $[n]=\{1,2,\ldots ,n\}$ represent feasible solutions.
We assume that $\f_1$ and $\f_2$ have a compact representation of size polynomial in
$m$ and $n$, respectively, although the number of feasible solutions in each family
could be of size exponential in $m$ or $n$. For each element $i\in [m]$ a linear cost
$c_i$ is given. Also, for each element $j\in [n]$ a linear cost $d_j$ is given. In
addition, for any $(i,j)\in [m]\times [n]$ their  \emph{interaction cost} $q_{ij}$ is given.  Then the
\emph{combinatorial optimization problem with interaction costs} (COPIC) is the
problem of finding $S_1\in \f_1$ and $S_2\in \f_2$ such that 
\begin{equation}
f(S_1,S_2) = \sum_{i\in S_1}\sum_{j\in S_2}q_{ij} + \sum_{i\in S_1}c_i + \sum_{j\in S_2}d_j
\end{equation}
is minimized. We denote an instance of this problem by
$\copic(\f_{1},\f_{2},Q,c,d)$, where $Q=(q_{ij})$ is the interaction cost matrix
and $c=(c_i)$, $d=(d_j)$ are linear cost vectors of the instance. This
generalizes the classical \emph {linear
cost combinatorial optimization problem}, where for a given family $\f$ of subsets of
$[n]$, and cost vector $w \in \r^{n}$ one tries to find a set $S \in \f$
minimizing
\[ \sum_{i \in S} w_{i}. \]
We denote an instance of this problem by $\lcop(\f, w)$.
\medskip

COPIC generalizes many well studied combinatorial optimization problems. For example, when $\f_1$ and $\f_2$ are respectively the family of perfect matchings in bipartite graphs $G_1$ and $G_2$ 
with respective edge sets $[m]$ and $[n]$, then COPIC reduces to the
\emph{bilinear assignment problem} (BAP)~\cite{CSPB16}. BAP is a generalization
of the well studied \emph{quadratic assignment problem}~\cite{C98} and the
\emph{three-dimensional assignment problem}~\cite{S00} and hence COPIC
generalizes these problems as well. When $\f_1$ and $\f_2$ contain all subsets
of $[m]$ and $[n]$ respectively, COPIC reduces to the
\emph{bipartite unconstrained quadratic programming problem}
\cite{Duarte2014123,  punnen2015bipartite,Glover2015,Karapetyan2017} studied in the literature by various
authors and under different names. Also, when $\f_1$ and $\f_2$ are feasible
solutions of  generalized upper bound constraints on $m$ and $n$ variables,
respectively, COPIC reduces to the \emph{bipartite quadratic assignment problem}
and its variations~\cite{CusticPunnenORL17,Punnen2016715}. Most quadratic combinatorial
optimization problems can also be viewed as special cases of COPIC, including
the \emph{quadratic minimum spanning tree problem}~\cite{AssadXu1992},
\emph{quadratic set covering problem}~\cite{BazaraaGoode1975}, \emph{quadratic
travelling salesman problem}~\cite{JagerMolitor2008}, etc. Thus all the
applications studied in the context of these special cases are applications of
COPIC as well. COPIC is a special case of bilinear integer programs
\cite{Konno81, Adams93, Freire12} when $\f_1$ and $\f_2$ can be represented by polyhedral sets. To further motivate the study of COPIC, let us consider the following illustration.

A spanning tree of a graph needs to be constructed as a backbone network. To
construct a link of the tree, many different tasks need to be completed, such as
digging, building conduits, laying fiber cables, lighting dark fiber etc. Each
of the tasks needs to be  assigned to different contractors and for each link in
a graph the costs vary by quotes from different contractors. We want to assign
the tasks to contractors and choose an appropriate tree topology so that the
overall construction cost is minimized. This optimization problem can be
formulated as a COPIC where feasible solution sets $\f_1$ and $\f_2$ correspond to
spanning trees and assignments of tasks to contractors, respectively.
\medskip

In this paper we investigate various theoretical properties of COPIC. To
understand the impact of interaction costs in combinatorial optimization we will
analyze special cases of the interaction cost matrix $Q$ for representative
well-studied sets of feasible solutions. Among others, the classes of
interaction cost matrices $Q$ that we will be focused on in this paper include
matrices of fixed rank, and diagonal matrices. 
In the literature many quadratic-like optimization problems have been investigated in the context of fixed rank or low rank cost matrices, for example see~\cite{Allemand2001,Bouras96,punnen2015bipartite,YK95}. Further, the
importance of investigating COPIC with diagonal matrices is illustrated by its
direct connections to problems of disjointness of combinatorial
structures~\cite{roskind1985note, gabow1992forests, vygen1994disjoint,
frank1988packing}, packing, covering and partitioning
problems~\cite{bernath2015tractability}, as well as to problems of congestion
games~\cite{ackermann2008impact, werneck2000finding}. In this paper we also pose
the problem of identifying cost structures  of COPIC instances that can be
reduced to an instance with no interaction costs. These instances are
called \emph{linearizable instances}~\cite{CDW16, KP11, PK13, CP15, CSPB16}. We
suggest an approach of identifying such instances for COPIC with specific feasible solution structures along with a characterization of linearizable instances.

The aforementioned topics are investigated on COPIC's with representative well-studied sets of feasible solutions $\f_1$, $\f_2$. 
To make easy future references to different sets of feasible solutions we introduce shorthand notations. We denote by $2^{[n]} = \{S \colon S \subseteq [n]\}$ the unconstrained
solution set. Given a matroid $\M$ we denote by $\base(\M)$ the set of bases of
$\M$. We denote by $\umat{n}{k}$ the uniform matroid, whose base set
$\base(\umat{n}{k})$ is the set of all $k$-sets of $[n]$. Given a graph $G$,
$\M(G)$ is the graphic matroid of $G$, whose base set $\base(\M(G))$ is the set of all spanning
trees of $G$ (or spanning forests if $G$ is not connected). The set of all maximum
matchings of $G$ is denoted by $\pmatch(G)$. Given two terminals $s,t \in V(G)$
the set of all $s$-$t$-paths in $G$ is denoted by $\paths_{s,t}(G)$. If $G$ is a
directed graph $\paths_{s,t}(G)$ is the set of all directed $s$-$t$-paths in
$G$. The set of all cuts in $G$ is denoted by $\cuts(G)$ and $\cuts_{s,t}(G)$ is
the set of all $s$-$t$-cuts in $G$.

Using these definitions, for example,  the bipartite unconstrained quadratic programming
problem~\cite{punnen2015bipartite} is denoted by $\copic(2^{[m]}, 2^{[n]}, Q, c, d)$.
\medskip

The structure of this paper is as follows. We begin by discussing the complexity
of COPIC with no significant constraints on the cost structure in
Section~\ref{sec:complexity}. Section~\ref{sec:rank} investigates the case when
the interaction cost matrix $Q$ is of fixed rank. Using the methods from parametric
optimization we show that in the case when one of the solution sets is
unconstrained, i.e.\@ $\f_1=2^{[n]}$ or $\f_2=2^{[m]}$, and linear cost optimization over the other
solution set can be done in polynomial time, the problem becomes polynomially
solvable. Further, we show that approximability may be achieved in the
case of $Q$ with fixed rank. We also show that if the number of breakpoints of
multi-parametric linear optimization over both sets of feasible solutions is
polynomially bounded and if $Q$ has fixed rank, then COPIC can be solved in polynomial time. Section~\ref{sec:diagonal} investigates COPIC's
where interaction cost matrix $Q$ is diagonal. That is, there is a one-to-one
relation between ground elements of $\f_1$ and $\f_2$ and the interaction costs
appear only between the pairs of the relation. The complexity of COPIC with
various well-knows feasible structures (matroids, paths, matchings, cuts, etc.)
in the context of diagonal matrix $Q$ are considered, and their relationship to
some existing results in the literature is presented. Characterization of linearizable instances is
investigated in Section~\ref{sec:lin}. The paper is concluded with
Section~\ref{sec:conclusion}, where we summarize the results and suggest some
problems for future work.

\section{General complexity}\label{sec:complexity}

Being a generalization of many hard combinatorial optimization problems, the general
COPIC is NP-hard. Moreover, even for the ``simple" case with no constraints on
the feasible solutions it results in the bipartite unconstrained quadratic programming
problem which is NP-hard \cite{punnen2015bipartite}. 
$\copic(2^{[m]}, 2^{[n]}, Q, c, d)$ can easily be embedded into a COPIC for most sets of feasible solutions $\f_1$ and $\f_2$, which implies again NP-hardness.
However, $\copic(2^{[m]}, 2^{[n]}, Q, c, d)$ is known to be solvable in polynomial time if
$Q \leq 0$ and if $Q,c,d \geq 0$ (see Punnen et al.~\cite{punnen2015bipartite}).
This is not true anymore if $\f_{1}, \f_{2}$ are bases of a uniform
matroid, for which we obtain the following hardness result.

\begin{theorem}\label{compumat}
    $\copic(\base(\umat{m}{k_1}), \base(\umat{n}{k_2}), Q, 0, 0)$ is
   strongly NP-hard even if $Q \geq 0$.
\end{theorem}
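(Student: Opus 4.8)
The plan is to prove strong NP-hardness by a reduction from the \emph{Balanced Complete Bipartite Subgraph} problem (BCBS): given a bipartite graph $G=(A\cup B,E)$ and an integer $K$, decide whether $G$ contains a copy of $K_{K,K}$, i.e.\ subsets $A'\subseteq A$, $B'\subseteq B$ with $|A'|=|B'|=K$ and $A'\times B'\subseteq E$. This problem is classically known to be NP-complete. A key structural remark drives the whole argument: since the instance I build will use only the numbers $0$ and $1$, NP-hardness already in this restricted $0/1$ regime yields \emph{strong} NP-hardness automatically, so no large weights ever need to be handled.

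Given a BCBS instance I would set $m=|A|$, $n=|B|$, $k_1=k_2=K$, and define the interaction matrix $Q\in\setzo^{m\times n}$ to be the bipartite \emph{complement} of $G$, namely $q_{ij}=0$ if $ij\in E$ and $q_{ij}=1$ otherwise. The linear costs are $c=0$, $d=0$ as required, and clearly $Q\ge 0$. For any $S_1\in\base(\umat{m}{k_1})$ and $S_2\in\base(\umat{n}{k_2})$, that is, any $k_1$-subset of rows and $k_2$-subset of columns, the objective
\[
 f(S_1,S_2)=\sum_{i\in S_1}\sum_{j\in S_2}q_{ij}
\]
counts precisely the number of \emph{non-edges} of $G$ lying inside $S_1\times S_2$.

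The equivalence is then immediate. Because every summand is nonnegative, $f(S_1,S_2)=0$ holds if and only if $S_1\times S_2\subseteq E$, i.e.\ if and only if $S_1$ and $S_2$ induce a copy of $K_{K,K}$ in $G$; otherwise $f(S_1,S_2)\ge 1$. Hence the optimal value of the constructed instance $\copic(\base(\umat{m}{k_1}),\base(\umat{n}{k_2}),Q,0,0)$ equals $0$ exactly when $G$ contains $K_{K,K}$, and is at least $1$ otherwise. Thus any polynomial-time algorithm for this COPIC (indeed, any procedure merely deciding whether its optimum is zero) would decide BCBS, which establishes NP-hardness; since the reduction is polynomial and, as noted, produces only $0/1$ data, the hardness is in the strong sense. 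Note that using $k_1=k_2=K$ even shows the problem is already hard for equal ranks.

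The one step requiring genuine care is invoking the NP-completeness of BCBS, equivalently the statement that deciding whether a $0/1$ matrix has an all-ones $k_1\times k_2$ submatrix (or whether its complement has an all-zero one) is NP-complete; everything else is a one-line verification. I expect the crux to be precisely the \emph{recognition} that minimizing a nonnegative submatrix sum down to value $0$ is nothing but the biclique / all-zero-submatrix question, after which the construction and its correctness are routine.
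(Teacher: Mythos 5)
Your proof is correct, but it takes a genuinely different route from the paper. The paper reduces from the cardinality-constrained \emph{directed minimum cut} problem: it observes that in the complete bipartite digraph $\vec{K}_{m,n}$ the directed cuts $\delta^{+}(S)$ with $|\delta^{+}(S)|=k$ correspond exactly to pairs $(S_1,S_2)$ with $|S_1|\,|S_2|=k$, and then solves the $k$-card min directed cut problem by calling a hypothetical COPIC oracle once for every factorization $k=k_1 k_2$; the strong NP-hardness of that cut problem is itself only asserted via ``similar arguments'' to a cited paper on the undirected case. Your reduction instead goes directly from Balanced Complete Bipartite Subgraph (GT24 in Garey--Johnson): one reduction, one oracle call, and the constructed matrix $Q$ is $0/1$, so strong NP-hardness is immediate by definition rather than inherited from the source problem. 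What your approach buys: it is more self-contained (the only external ingredient is a classical NP-completeness result), it shows hardness already for $k_1=k_2$ and for $0/1$ interaction costs, and since it hinges on distinguishing optimum value $0$ from optimum value $\geq 1$, it yields as a free corollary that no polynomial-time approximation with any multiplicative guarantee exists unless P $=$ NP --- something the paper's argument does not give. What the paper's approach buys: it exhibits the tight structural equivalence between $\copic(\base(\umat{m}{k_1}), \base(\umat{n}{k_2}), Q, 0, 0)$ and cardinality-constrained bipartite directed cuts, which situates the result within the paper's broader theme of COPIC as a unifying framework for known problems. Both arguments are valid; yours is the cleaner hardness proof.
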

\begin{proof}
We give a reduction from a strongly NP-hard version of the cardinality constrained directed minimum cut problem.


Let $\vec{K}_{m,n}$ be a digraph with vertex sets $[m]$ and $[n]$ and arcs
$(i,j)$ for each $i \in [m]$ and $j \in [n]$. The \emph{k-card min directed cut
problem} asks for a minimum cost directed cut $\delta^{+}(S) = \{ (i,j) \colon i \in S, j
\notin S \}$ such that $|\delta^{+}(S)| = k$.
Using similar
arguments as in~\cite{bruglieri2004cardinality} one can show that this directed version of the minimum cut problem is strongly NP-hard.
Now we show how this problem can be solved in polynomial time,
assuming a polynomial time algorithm for $\copic(\base(\umat{m}{k_1}),
\base(\umat{n}{k_2}), Q, 0, 0)$ exists. 

For each $k_{1} = 1,2, \dots, m$ check if $\frac{k}{k_{1}}$ is an integer. If so set
$k_{2} = \frac{k}{k_{1}}$ and solve the instance $\copic(\base(\umat{m}{k_{1}}),
\base(\umat{n}{k_{2}}), Q, 0, 0)$, obtaining solution sets $S_{1}, S_{2}$.
Note that $|S_{1}| |S_{2}| = k$, i.e. it corresponds to exactly $k$ edges.
We can define an equivalent directed cut $\delta^{+}(S)$ by setting 
\[ S = S_{1} \cup ([n] \setminus S_{2}). \]
This way the directed cuts $\delta^{+}(S)$ are in one to one correspondence with
solutions of COPIC. Doing this for all possible pairs $(k_1, k_2)$, we can
obtain all possible $k$-cuts as feasible solutions of instances of
$\copic(\base(\umat{m}{k_{1}}), \base(\umat{n}{k_{2}}), Q, 0, 0)$.
Taking the minimum found via all such COPIC problems solves the $k$-card directed
min cut problem in the given bipartite digraph. 
\end{proof}

Theorem~\ref{compumat} can be used to show that $\copic(\f_{1}, \f_{2}, Q, 0,
0)$ is NP-hard already for $Q \geq 0$ for most sets of feasible solutions
$\f_{1}, \f_{2}$, since in many cases cardinality constraints can be easily
encoded in more complicated sets of feasible solutions.


\smallskip
On the positive side, if we fix one of the two solutions, e.g.\@ $S_{1} \in \f_{1}$,
then finding the corresponding optimal solution $S_{2} \in \f_{2}$ reduces to
solving $\lcop(\f_{2}, h)$, where 
\begin{equation}\label{lcop_costs}
	h_{j}:= \sum_{i \in S_{1}} q_{ij} + d_{j} \ \ \ \text{ for } j \in [n].
\end{equation}
This implies that if the cardinality of one set of feasible solutions, say $\f_1$, is
polynomially bounded in the size of the input, then we can solve COPIC by solving
linear instances $\lcop(\f_2,h)$ (where $h$ is defined by \eqref{lcop_costs}) for all $S_1\in\f_1$.

\begin{theorem}
    If $m = O(\log n)$ and $\lcop(\f_{2}, h)$ can
    be solved in polynomial time for any cost vector $h \in \r^{n}$,
    then $\copic(\f_{1}, \f_{2}, Q, c, d)$ can be solved in polynomial time.
\end{theorem}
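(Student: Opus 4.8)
The plan is to combine the reduction described immediately before the statement with a counting bound on $\f_1$. Recall that once $S_1 \in \f_1$ is fixed, the optimal companion $S_2 \in \f_2$ is obtained by a single call to $\lcop(\f_2, h)$, where $h$ is the cost vector of \eqref{lcop_costs}. Thus it suffices to run over all candidate $S_1$ and keep the best resulting pair, and the only real question is whether there are few enough candidates to process in polynomial time.

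First I would bound $|\f_1|$. Since $\f_1 \subseteq 2^{[m]}$ we have $|\f_1| \le 2^m$, and the hypothesis $m = O(\log n)$ yields $2^m = 2^{O(\log n)} = n^{O(1)}$. Hence $\f_1$ has only polynomially many members, and we may enumerate them, for instance by iterating over the $2^m$ subsets of $[m]$ and retaining those that lie in $\f_1$.

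Next, for each fixed $S_1 \in \f_1$ I would build the vector $h \in \r^n$ given by $h_j = \sum_{i \in S_1} q_{ij} + d_j$, which costs $O(mn)$ arithmetic operations, and solve $\lcop(\f_2, h)$ in polynomial time to obtain an optimal $S_2(S_1)$. The identity $\sum_{j \in S_2} h_j = f(S_1, S_2) - \sum_{i \in S_1} c_i$ shows that the term $\sum_{i \in S_1} c_i$ is a constant for fixed $S_1$; it does not influence which $S_2$ is selected, but it must be added back when comparing solutions across different choices of $S_1$. I would therefore record the value $f(S_1, S_2(S_1)) = \sum_{j \in S_2(S_1)} h_j + \sum_{i \in S_1} c_i$ and finally output the pair minimizing this quantity over all $S_1 \in \f_1$.

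Correctness is immediate: at a global optimum $(S_1^*, S_2^*)$, the call to $\lcop(\f_2, h)$ with $S_1 = S_1^*$ returns some $S_2(S_1^*)$ with $f(S_1^*, S_2(S_1^*)) \le f(S_1^*, S_2^*)$, so the optimal value is never missed. For the running time, there are $n^{O(1)}$ choices of $S_1$, each handled with $O(mn)$ arithmetic operations and one polynomial-time linear optimization, giving a polynomial total. I do not expect a genuine obstacle here: the argument is essentially the preceding remark applied with the observation $2^m = n^{O(1)}$, and the only mild subtlety is keeping the constant offset $\sum_{i \in S_1} c_i$ correct across iterations.
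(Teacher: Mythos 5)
Your proposal is correct and follows the same route as the paper, which proves this theorem via the discussion immediately preceding it: enumerate the polynomially many candidates for $S_1$ (here $2^m = n^{O(1)}$ by the hypothesis $m = O(\log n)$), solve $\lcop(\f_2, h)$ with $h$ as in \eqref{lcop_costs} for each, and return the best pair. Your explicit bookkeeping of the constant offset $\sum_{i \in S_1} c_i$ is a detail the paper leaves implicit, but it is the same argument.
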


\section{The interaction matrix with fixed rank}\label{sec:rank}

In this section we investigate the behavior of COPIC in terms of complexity and approximability when the rank of the interaction costs matrix $Q$ is fixed. 
In the literature, many optimization problems have been investigated in the context of fixed rank or low rank cost matrices. This also includes problems with quadratic-like objective functions. For example, the Koopmans-Beckmann QAP~\cite{Bouras96}, the unconstrained zero-one quadratic maximization problem~\cite{Allemand2001}, bilinear programming problems~\cite{YK95}, the bipartite unconstrained quadratic programming problem~\cite{punnen2015bipartite}, among others.

Let $\rank(Q)$ denotes the rank of a matrix $Q$. Then $\rank(Q)$ is at most $r$, if and only if there exist vectors $a_{p}
= (a^{(p)}_{1}, a^{(p)}_{2}, \dots, a^{(p)}_{m}) \in \r^{m}$
and $b_{p} = (b^{(p)}_{1}, b^{(p)}_{2}, \dots, b^{(p)}_{n}) \in \r^{n}$ for $p=1,2,\ldots,r$, such that
\begin{equation}\label{fact}
    Q = \sum_{p=1}^r a_{p} \transp{b_{p}}.
\end{equation}
We say that \eqref{fact} is a \emph{factored form} of $Q$. Then
$\copic(\f_{1}, \f_{2}, Q,c,d)$, where $Q$ is of fixed rank $r$, becomes minimizing
\begin{equation}\label{COPICfixed}
    f(S_1,S_2)=\sum_{p=1}^r \left(\sum_{i\in S_1} a_{i}^{(p)} \sum_{j\in S_2}
    b_{j}^{(p)} \right) + \sum_{i\in S_1} c_{i} + \sum_{j\in S_2} d_{j},
\end{equation}
such that $S_1\in\f_1$, $S_2\in\f_2$.

In the following, we show that if $\f_{1} (= 2^{[m]})$ is unrestricted, i.e.\@ the set
of all subsets of $[m]$, then we can generalize the results of
Punnen~et~al.~\cite{punnen2015bipartite} to solve the problem. Using methods of
multi-parametric optimization we also demonstrate how to tackle more-general
problems where both sets of feasible solutions are constrained, if their
parametric complexity is bounded.

These results are obtained using methods from binary and linear optimization.
To apply these techniques we will formulate our problem in terms of binary
variables. We achieve this in a straightforward way, by introducing variables $x \in
\{0,1\}^{m}, y \in \{0,1\}^{n}$ in one to one correspondence with a solution $S_{1}, S_{2}$,
such that $x_{i} = 1$ iff $i \in S_{1}$, and $y_{j} = 1$ iff $j \in S_{2}$. 
The vector $x$ and $y$ are respectively called the incidence vectors of $S_1$ and $S_2$.
Thus the family of feasible solutions can be represented in terms of the incidence vectors, i.e.\@
 $\f'_{1} = \{ x \in \{0,1\}^{m} \colon S_{1} \in \f_{1}\text{ and } (x_{j} = 1 \Leftrightarrow j \in
S_{1}) \}$ and $\f'_{2} = \{ y \in \{0,1\}^{n} \colon S_{2} \in \f_{2} \text{
and } (y_{j} = 1 \Leftrightarrow j \in S_{2}) \} $.
Now, rank $r$ COPIC can be formulated as the binary optimization problem:
\begin{align*}
    \min\ & \sum_{p=1}^{r} (\transp{a_{p}} x) (\transp{b_{p}} y) + \transp{c} x +
    \transp{d} y\\
     \text{s.t.\ \ } & x \in \f'_{1}\\
     & y \in \f'_{2}
\end{align*}

\subsection{One-sided unconstrained fixed rank COPIC}

In this section we consider the case where $\f'_{1} = \{0,1\}^{m}$. Observe that
COPIC is equivalent to
the following linear relaxation of the constraint $x \in \{0,1\}^{m}$.
\begin{align*}
    \min\  & \sum_{p=1}^{r} (\transp{a_{p}} x) (\transp{b_{p}} y) + \transp{c} x +
    \transp{d} y\\
     \text{s.t.\ \ } & x \in [0,1]^{m}\\
     & y \in \f'_{2}
\end{align*}
To solve this problem, consider the multi-parametric linear program (MLP)
\begin{align*}
    h_{1}(\lambda) := \min\ & c^T x\\
    \text{s.t.\ } & \transp{a_{p}} x = \lambda_{p} \quad \text{for }p = 1,2,\dots,r\\
    & x \in [0,1]^{m},
\end{align*}
where $\lambda = (\lambda_{1}, \lambda_{2}, \dots, \lambda_{r}) \in \r^{r}$. Then
$h_{1}(\lambda)$ is a piecewise linear convex function~\cite{gal1995param}.
A basis structure for MLP is a partition $(\base, \mathcal{L}, \mathcal{U})$ of
$[m]$, such that $|\base| = r$. With each basic feasible solution of MLP we
associate a basis structure $(\base, \mathcal{L}, \mathcal{U})$, where
$\mathcal{L}$ is the index set of nonbasic variables at the lower bound $0$,
$\mathcal{U}$ is the index set of nonbasic variables at the upper bound $1$ and
$\base$ is the index set of basic variables.
Given a dual feasible basis structure $(\base, \mathcal{L}, \mathcal{U})$, the
set of values $\lambda \in \r^{r}$ for which the corresponding basic solution is
optimal is called the characteristic region of $(\base, \mathcal{L},
\mathcal{U})$.
Since $h_{1}(\lambda)$ is piecewise linear convex, $h_{1}(\lambda)$ is linear if $\lambda$ is
restricted to a characteristic region associated with a dual feasible basic
structure $(\base, \mathcal{L}, \mathcal{U})$. We call the extreme points of the
characteristic regions of $(\base, \mathcal{L}, \mathcal{U})$ as breakpoints and
denote the set of these breakpoints by $B_{1}$ and define $x(\lambda)$
as the optimal basic feasible solution of $h_{1}(\lambda)$ at each $\lambda \in
B_{1}$.
By the results of Punnen et al.~\cite[Theorem~3]{punnen2015bipartite} we
know that $x(\lambda) \in \{0,1\}^{m}$. Let $y(\lambda) \in \f_{2}$ be an
optimal solution to our instance of COPIC when $x$ is fixed at $x(\lambda)$.
In this case COPIC reduces to
\begin{align*}
    \min\ & \left(\sum_{p=1}^{r} (\transp{a_{p}} x(\lambda)) \transp{b_{p}}  +
    \transp{d}\right) y\\
     \text{s.t.\ \ } & y \in \f_{2}
\end{align*}
which is an instance of $\lcop(\f_{2}, f)$, with $f = \sum_{p=1}^{r}
(\transp{a_{p}} x(\lambda)) b_{p} + d$. This allows us to calculate $y(\lambda)$
in $O(\max\{r m n, T(\f_{2}) \})$ time, using an $T(\f_{2})$-time algorithm for
$\lcop(\f_{2}, f)$, for each $\lambda \in B_{1}$.

\begin{theorem}\label{thm:fixrank-paramsol}
    There exists an optimal solution to $\copic(2^{[m]}, \f_{2}, Q, c, d)$ with
    $\rank(Q) = r$ amongst the solutions $\{(x(\lambda), y(\lambda)) \colon
        \lambda \in B_{1}
\}$.
\end{theorem}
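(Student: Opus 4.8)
The plan is to show that the parametric family of candidates already contains a global optimum, by treating the problem one fixed $y$ at a time and exploiting the integrality of the MLP at its breakpoints. Let $(x^*, y^*)$ be an optimal solution of $\copic(2^{[m]}, \f_{2}, Q, c, d)$, and abbreviate the objective by
\[
F(x,y) := \sum_{p=1}^{r} (\transp{a_{p}} x)(\transp{b_{p}} y) + \transp{c} x + \transp{d} y.
\]
For fixed $y^*$ the map $F(\cdot, y^*)$ is linear in $x$, so minimizing it over the box $[0,1]^{m}$ has the same value as minimizing over $\{0,1\}^{m}$; in particular $\min_{x \in [0,1]^{m}} F(x, y^*) = F(x^*, y^*)$, the global optimum value. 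First I would rewrite this relaxed minimization through the parameter $\lambda = (\transp{a_{1}} x, \dots, \transp{a_{r}} x)$: separating out $h_{1}(\lambda)$ gives
\[
\min_{x \in [0,1]^{m}} F(x, y^*) = \transp{d} y^* + \min_{\lambda}\Big( \sum_{p=1}^{r} (\transp{b_{p}} y^*)\, \lambda_{p} + h_{1}(\lambda) \Big),
\]
where $\lambda$ ranges over the compact set of attainable parameter vectors.

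Denote the inner objective by $g(\lambda) := \sum_{p=1}^{r} (\transp{b_{p}} y^*)\,\lambda_{p} + h_{1}(\lambda)$. It is the sum of a linear function and the piecewise linear convex function $h_{1}$, hence it is piecewise linear and, crucially, affine on each characteristic region of the MLP. The next step is to observe that such a function attains its minimum over its compact domain at a vertex of this polyhedral subdivision, i.e.\@ at a breakpoint in $B_{1}$: restricted to any single characteristic region $g$ is affine, so its minimum there lies at an extreme point of the region, and taking the best over all regions the overall minimizer is again an extreme point, hence some $\lambda^* \in B_{1}$. At $\lambda^*$ the associated optimal basic solution is $x(\lambda^*)$, and by Punnen et al.~\cite[Theorem~3]{punnen2015bipartite} we have $x(\lambda^*) \in \{0,1\}^{m}$. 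Combining with the displayed identity yields $F(x(\lambda^*), y^*) = \min_{x \in [0,1]^{m}} F(x, y^*) = F(x^*, y^*)$, so the integral pair $(x(\lambda^*), y^*)$ is already globally optimal.

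It then remains to replace $y^*$ by $y(\lambda^*)$ without increasing the objective. By definition $y(\lambda^*)$ solves $\lcop(\f_{2}, f)$ with $x$ fixed at $x(\lambda^*)$, i.e.\@ it minimizes $F(x(\lambda^*), \cdot)$ over $\f'_{2}$; therefore $F(x(\lambda^*), y(\lambda^*)) \le F(x(\lambda^*), y^*) = F(x^*, y^*)$. Since the right-hand side is the global minimum, equality holds throughout, so $(x(\lambda^*), y(\lambda^*))$ with $\lambda^* \in B_{1}$ is optimal, which is the claim. I expect the main obstacle to be the middle step: one must argue carefully that passing to the parameter $\lambda$ preserves feasibility and optimality, that $g$ is genuinely affine on each characteristic region so that its minimum is attained at a breakpoint rather than in some relative interior, and that the integrality guarantee of~\cite{punnen2015bipartite} indeed applies at every breakpoint of $B_{1}$. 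The concluding best-response argument is then routine.
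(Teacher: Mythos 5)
Your proof is correct, and it reaches the theorem by a genuinely different finishing argument than the paper's. The paper minimizes over $y \in \f_{2}$ for \emph{every} fixed $\lambda$, decomposing the optimal value as $h(\lambda) = h_{1}(\lambda) + h_{2}(\lambda)$; its key lemma is that $h_{2}$, as a pointwise minimum of linear functions of $\lambda$, is piecewise linear concave, so $h$ is concave on each characteristic region of $h_{1}$ and hence attains its minimum at a breakpoint in $B_{1}$. You instead freeze the optimal $y^{*}$, so that the inner function $g(\lambda) = \sum_{p=1}^{r}(\transp{b_{p}} y^{*})\lambda_{p} + h_{1}(\lambda)$ is affine (not merely concave) on each characteristic region; you locate a minimizing breakpoint $\lambda^{*} \in B_{1}$, invoke the integrality of $x(\lambda^{*})$ from Punnen et al., and then exchange $y^{*}$ for the best response $y(\lambda^{*})$, which by definition cannot increase the objective. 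What each buys: your route is more elementary, needing only that an affine function on a compact polytope attains its minimum at an extreme point plus a routine exchange step, and it entirely avoids the concavity-of-envelopes argument; the paper's route proves the stronger structural fact that the full envelope $h(\lambda)$ is concave on every characteristic region, which is the property that conceptually drives the enumeration algorithm built on this theorem in the remainder of the section. Both arguments rest on the same implicit prerequisites, which you rightly flag as the delicate points: the characteristic regions, restricted to the attainable parameter set (the image of $[0,1]^{m}$ under $x \mapsto (\transp{a_{p}}x)_{p}$, a compact polytope), are compact so that extreme points exist and minima are attained there, and $x(\lambda)$ is binary at every breakpoint by \cite[Theorem~3]{punnen2015bipartite}; neither point fails, so your proof is on equally firm ground.
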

\begin{proof}
    Rank $r$ COPIC is equivalent to solving the bilinear program
    \begin{align*}
        \min\ & \sum_{p=1}^{r} \lambda_{p} (\transp{b_{p}} y) + \transp{c} x +
        \transp{d} y\\
        \text{s.t.\ \ } & \transp{a_{p}} x = \lambda_{p} \quad p = 1, 2, \dots, r\\
        & x \in [0,1]^{m}, y \in \f_{2}, \lambda \in \r^{r}.
    \end{align*}
    Let $h(\lambda)$ be the optimal value if $\lambda$ is fixed, then we can
    decompose $h(\lambda)$ into $h(\lambda) = h_{1}(\lambda) + h_{2}(\lambda)$,
    where
    \begin{align*}
        h_{2}(\lambda) = \min\ & \sum_{p=1}^{r} \lambda_{p} (\transp{b_{p}} y) +
        \transp{d} y \\
       \text{s.t.\ \ } & y \in \f_{2}.
    \end{align*}
    So rank $r$ COPIC can be reduced to solving
    \[ \min_{\lambda \in \r^{r}} h(\lambda). \]
    We already argued above that $h_{1}(\lambda)$ is a piecewise linear convex 
    function in $\lambda$. Using the fact that $h_{2}(\lambda)$ is the pointwise minimum of
    linear functions, we obtain that $h_{2}(\lambda)$ is a piecewise linear
    concave function in $\lambda$~\cite{boyd2004convex}. This implies that
    $h_{1}(\lambda)$ is linear, if $\lambda$ is restricted to any
    characteristic region of $h_{1}(\lambda)$ and thus $h(\lambda)$ is concave on each of
    these regions. This implies that the minimum of $h(\lambda)$ is attained at
    a breakpoint of $h_{1}(\lambda)$, which implies the result since $B_{1}$ is
    defined as the set of these breakpoints.
\end{proof}

Analogously to Punnen et al.~\cite{punnen2015bipartite}, we can use
Theorem~\ref{thm:fixrank-paramsol} to solve rank $r$ COPIC using the
following approach.

\begin{enumerate}
    \item Compute the set $\bar{S}$ of all optimal basic feasible solutions
        corresponding to the extreme points of the characteristic region of a
        dual  feasible basis structure $(\base, \mathcal{L},
        \mathcal{U})$ of $h_{1}(\lambda)$.
    \item For each $x \in \bar{S}$ compute the best $y \in \f_{2}$ by solving
        $\lcop(\f_{2}, f)$, with $f = \sum_{p=1}^{r} (\transp{a_{p}} x) b_{p} + d$.
    \item Output the best pair $(x,y)$ with minimum total cost found in the last
        step.
\end{enumerate}

By the arguments above it follows that this algorithm finds an optimal
solution. There are $\binom{m}{r}$ choices for $\base$ and each of them gives a
unique allocation of non-basic variables to $\mathcal{L}$ and $\mathcal{U}$
(uniqueness following from non-degeneracy which can be achieved by appropriate
perturbation of the cost vector). The basis inverse can be obtained in
$O(r^{3})$ time and given this inverse $\mathcal{L}$ and $\mathcal{U}$ can be
identified in $O(m r^{3})$ time, such that $(\base, \mathcal{L}, \mathcal{U})$
is dual feasible. This implies that the set of dual feasible basis structures is
bounded by $\binom{m}{r}$ and can be calculated in $O(\binom{m}{r} (r^{3} + m
r^{2}))$ time. By~\cite[Theorem 3]{punnen2015bipartite}, we know that the number
of extreme points associated with $(\base, \mathcal{L}, \mathcal{U})$ is bounded
by $2^{r}$ and how to calculate the optimal solution of $h_{1}(\lambda)$ for
$\lambda$ fixed at these extreme points without explicitly calculating
$\lambda$. This allows us to compute $\bar{S}$ in $O(\binom{m}{r} 2^{r} m)$
time. Fixing $x \in \bar{S}$, the best corresponding solution $y$ can be computed
in $O(\max\{m r n, \runtime(\f_{2})\})$ time. Summarizing this gives the following result.

\begin{theorem}
    If $\rank(Q) = r$ and there is a $\runtime(\f_{2})$-time algorithm for
    $\lcop(\f_{2}, f)$ for every $f \in \r^{n}$, then $\copic(2^{[m]}, \f_{2}, Q, c,
    d)$ can be solved in $O(\binom{m}{r} 2^{r} \max\{m r n, \runtime(\f_{2})\})$ time.
\end{theorem}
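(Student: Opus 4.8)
The plan is to establish the stated running time by carefully counting the cost of each of the three steps in the algorithm presented immediately before the theorem, and then verifying that the total is dominated by the claimed bound. The correctness of the algorithm is already guaranteed by Theorem~\ref{thm:fixrank-paramsol}, which tells us that an optimal solution lies among the pairs $\{(x(\lambda), y(\lambda)) \colon \lambda \in B_{1}\}$; so the theorem is purely a complexity statement and the proof is essentially a bookkeeping argument.

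First I would bound the number of dual feasible basis structures $(\base, \mathcal{L}, \mathcal{U})$. Since $|\base| = r$, there are $\binom{m}{r}$ ways to choose $\base$, and for each such choice the allocation of the remaining indices to $\mathcal{L}$ and $\mathcal{U}$ is forced (up to the non-degeneracy assumption, which can be guaranteed by perturbing $c$). I would then account for the work per basis structure: inverting the $r \times r$ basis takes $O(r^{3})$ time, and identifying the correct partition into $\mathcal{L}$ and $\mathcal{U}$ so that dual feasibility holds takes $O(m r^{2})$ time given the inverse. This yields the $O\!\left(\binom{m}{r}(r^{3} + m r^{2})\right)$ bound for enumerating all relevant basis structures.

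Next I would invoke \cite[Theorem~3]{punnen2015bipartite} to bound the number of extreme points of the characteristic region associated with each basis structure by $2^{r}$, and to justify that the optimal solution $x(\lambda)$ of $h_{1}(\lambda)$ at each such extreme point can be read off directly (integral, in $\{0,1\}^{m}$) without explicitly computing $\lambda$. Collecting these over all basis structures gives $|\bar{S}| = O\!\left(\binom{m}{r} 2^{r}\right)$ candidate vectors $x$, computable in $O\!\left(\binom{m}{r} 2^{r} m\right)$ time. Finally, for each fixed $x \in \bar{S}$, I would solve $\lcop(\f_{2}, f)$ with $f = \sum_{p=1}^{r} (\transp{a_{p}} x) b_{p} + d$; forming $f$ costs $O(m r n)$ (assembling the $r$ scalars $\transp{a_{p}} x$ and the length-$n$ combination), and solving the linear problem costs $\runtime(\f_{2})$, for a per-candidate cost of $O(\max\{m r n, \runtime(\f_{2})\})$.

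The last part is to combine these pieces: multiplying the number $O\!\left(\binom{m}{r} 2^{r}\right)$ of candidates by the per-candidate work $O(\max\{m r n, \runtime(\f_{2})\})$ gives the dominant term $O\!\left(\binom{m}{r} 2^{r} \max\{m r n, \runtime(\f_{2})\}\right)$, and I would check that the enumeration cost from the first step is absorbed by this. The step I expect to require the most care is not the arithmetic but the clean attribution of where the $2^{r}$ factor and the integrality of $x(\lambda)$ come from; both are imported wholesale from \cite[Theorem~3]{punnen2015bipartite}, so the main obstacle is simply to state precisely which guarantees of that cited result we are relying on, rather than to reprove them here.
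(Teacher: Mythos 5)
Your proposal is correct and follows essentially the same route as the paper: enumerate the $\binom{m}{r}$ dual feasible basis structures (with uniqueness of the $\mathcal{L},\mathcal{U}$ allocation via perturbation), import the $2^{r}$ bound on extreme points and the integrality of $x(\lambda)$ from \cite[Theorem~3]{punnen2015bipartite}, and charge $O(\max\{m r n, \runtime(\f_{2})\})$ per candidate $x \in \bar{S}$, with correctness delegated to Theorem~\ref{thm:fixrank-paramsol}. The only cosmetic difference is that you verify explicitly that the basis-enumeration cost $O\bigl(\binom{m}{r}(r^{3}+mr^{2})\bigr)$ is absorbed by the dominant term, a check the paper leaves implicit.
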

\begin{rem}
    An identical approach works for sets of feasible solutions $\f_{1}$, for which we can
    solve the linear cost minimization problem, extended by a constant number of
    side constraints of the form $\transp{a_{p}} x = \lambda_{p}$ and the number of breakpoints (in $\lambda$) is polynomially
    bounded. But this does not help for most non-continuous problems, because
    already for the bases of a uniform matroid this corresponds to a partition problem. 
\end{rem}
We can now use Theorem~\ref{thm:fixrank-paramsol} to obtain 
approximation algorithms for rank $r$ COPIC based on approximation algorithms
for the linear problem with feasible solutions in $\f_{2}$.

\begin{theorem}\label{thm:fixrank-approx}
    $\copic(2^{[m]}, \f_{2}, Q, c, d)$ such that $\lcop(\f_{2}, f)$ admits a $\runtime(\f_{2})$ time
    $\alpha$-approximation algorithm for arbitrary $f \in \r^{n}$, has a 
        $O(\binom{m}{r} 2^{r} \max\{m r n, \runtime(\f_{2})\})$ time 
        $\alpha$-approximation algorithm.
\end{theorem}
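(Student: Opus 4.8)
The plan is to run essentially the same three-step procedure that establishes the exact-solution result of the preceding theorem, but to replace the exact linear oracle in its second step by the assumed $\alpha$-approximation algorithm. Concretely, I would first compute the set $\bar{S}$ of integral optimal basic feasible solutions $x(\lambda)$ associated with the breakpoints $B_{1}$ of $h_{1}(\lambda)$, exactly as in the exact algorithm; this set is produced in $O(\binom{m}{r} 2^{r} m)$ time and, crucially, its construction does not involve $\f_{2}$ at all. Then, for each $x \in \bar{S}$, I would form the linear cost vector $f = \sum_{p=1}^{r} (\transp{a_{p}} x) b_{p} + d$ (computable in $O(mrn)$ time) and invoke the $\alpha$-approximation algorithm for $\lcop(\f_{2}, f)$ to obtain an approximate $\tilde{y} \in \f_{2}$ in $\runtime(\f_{2})$ time, recording the pair $(x, \tilde{y})$ together with its true COPIC cost $\transp{f}\tilde{y} + \transp{c}x$. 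Finally I would output the recorded pair of least cost. Since the per-$x$ work is $O(\max\{mrn, \runtime(\f_{2})\})$ and the loop ranges over at most $\binom{m}{r} 2^{r}$ candidates, the total running time matches the claimed bound, and no new timing argument beyond the exact case is needed.

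For correctness I would argue as follows. Let $(x^{*}, y^{*})$ be an optimal solution to $\copic(2^{[m]}, \f_{2}, Q, c, d)$. By Theorem~\ref{thm:fixrank-paramsol} we may take $x^{*} = x(\lambda^{*})$ for some $\lambda^{*} \in B_{1}$, with $y^{*}$ an optimal linear solution for this fixed $x^{*}$; hence $x^{*} \in \bar{S}$ is among the candidates examined. When $x^{*}$ is processed, writing $f = \sum_{p=1}^{r}(\transp{a_{p}} x^{*}) b_{p} + d$, the returned $\tilde{y}$ satisfies $\transp{f}\tilde{y} \le \alpha \cdot \min_{y \in \f_{2}} \transp{f} y = \alpha\, \transp{f} y^{*}$, while $\mathrm{OPT} = \transp{f} y^{*} + \transp{c} x^{*}$ since $\transp{c} x^{*}$ is constant once $x^{*}$ is fixed. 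Adding this offset to both sides gives
\[
    \transp{f}\tilde{y} + \transp{c}x^{*}
    \le \alpha\, \transp{f} y^{*} + \transp{c} x^{*},
\]
whose left side is the true COPIC cost of the candidate $(x^{*}, \tilde{y})$, and the cost reported by the algorithm is at most this value.

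The main obstacle is precisely this additive offset $\transp{c} x^{*}$: a multiplicative guarantee on the fixed-$x$ linear subproblem does not by itself absorb a constant term. To conclude $\transp{f}\tilde{y} + \transp{c}x^{*} \le \alpha\,\mathrm{OPT} = \alpha(\transp{f} y^{*} + \transp{c} x^{*})$ one needs $(\alpha - 1)\transp{c} x^{*} \ge 0$, i.e.\@ $\transp{c} x^{*} \ge 0$, and more broadly the optimal linear value $\transp{f} y^{*}$ must be nonnegative for a ratio $\alpha > 1$ over the subproblems to be meaningful at all. I would therefore make explicit the standing nonnegativity convention under which the $\alpha$-approximation for $\lcop(\f_{2}, \cdot)$ is stated, so that each fixed-$x$ subproblem has nonnegative optimum and nonnegative offset, and verify that under this convention the displayed inequality yields $\transp{f}\tilde{y} + \transp{c}x^{*} \le \alpha \cdot \mathrm{OPT}$. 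The remaining ingredients—the correctness of $\bar{S}$ as a set containing an optimal $x^{*}$, and the timing—are inherited verbatim from the exact algorithm, so this offset bookkeeping is the only genuinely new step.
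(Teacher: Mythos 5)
Your proposal is correct and follows the paper's proof essentially verbatim: compute the candidate set $\bar{S}$ as in the exact algorithm, use Theorem~\ref{thm:fixrank-paramsol} to guarantee that the optimal $x^{*}=x(\lambda^{*})$, $\lambda^{*}\in B_{1}$, is among the candidates, and when $x^{*}$ is processed replace the exact oracle for $\lcop(\f_{2},f)$ by the $\alpha$-approximation, yielding $h_{1}(\lambda^{*})+\tilde{h}_{2}\leq h_{1}(\lambda^{*})+\alpha h_{2}(\lambda^{*})\leq \alpha h(\lambda^{*})$ with the same timing analysis. The ``offset obstacle'' you single out is genuine but is equally present in the paper's own argument, whose last inequality silently requires $h_{1}(\lambda^{*})=\transp{c}x^{*}\geq 0$ (and, for the subproblem ratio to be meaningful, $h_{2}(\lambda^{*})\geq 0$), so the explicit nonnegativity convention you propose is precisely what the paper tacitly assumes rather than a defect of your approach relative to theirs.
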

\begin{proof}
    By Theorem~\ref{thm:fixrank-paramsol} there exists an optimal solution
    \[ (x^{*}, y^{*}) = (x(\lambda^{*}), y(\lambda^{*})) \in \{(x(\lambda),
    y(\lambda)) \colon \lambda \in B_{1}\}.\] By the method above
    we will in some iteration find $x^{*}$ as one of the extreme points of a
    characteristic region of $h_{1}(\lambda)$. Then calculating $y^{*}$ is
    equivalent to solving $\lcop(\f_{2}, f)$ with $f = \sum_{p=1}^{r}
    (\transp{a_{p}} x^{*}) b_{p} + d$.
    Instead of solving this problem to optimality we can use our
    $\alpha$-approximation algorithm and obtain a solution $\tilde{y} \in
    \f_{2}$ such that 
    \[ \tilde{h}_{2} := \sum_{p=1}^{r} (\transp{a_{p}} x^{*}) (\transp{b_{p}}
        \tilde{y}) + \transp{d} \tilde{y}
    \leq \alpha h_{2}(\lambda^{*}). \]
    Altogether for our found solution $(x^{*}, \tilde{y})$ we obtain a bound on
    the objective value given by
    \[ h_{1}(\lambda^{*}) + \tilde{h}_{2} \leq h_{1}(\lambda^{*}) + \alpha
    h_{2}(\lambda^{*}) \leq \alpha h(\lambda^{*}). \]    
\end{proof}

For the more general case of rank $r$ COPIC, where both $\f_{1}$ and
$\f_{2}$ are constrained, we can still obtain a FPTAS based on the results of
Mittal and Schulz~\cite{mittal2013fptas}, for a restricted class of objective
functions.

\begin{theorem}[Mittal and Schulz~\cite{mittal2013fptas}]
    Consider the separable bi-linear programming problem
	\begin{align*}
        \min\ & \sum_{p=1}^{r} (\transp{a_{p}} x) (\transp{b_{p}} y) + \transp{c} x + \transp{d} y\\
         \text{s.t.\ \ } & x \in P_{1}\\
         & y \in P_{2}
    \end{align*}
    where $P_{1}, P_{2}$ are polytopes, completely given in terms of linear
    inequalities or by a polynomial time separation oracle, for fixed $r$. Then
    the problem admits a FPTAS giving a solution that is an extreme point of
    $P_{1}, P_{2}$, if $\transp{c} x > 0, \transp{d} y > 0$ and
    $\transp{a_{p}} x > 0, \transp{b_{p}} y > 0$ for $p=1,2,\dots,r$ over
    the polytopes $P_{1}, P_{2}$.
\end{theorem}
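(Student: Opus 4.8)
The plan is to exhibit the separable bilinear objective as a \emph{low-rank function} of a constant number of linear functionals over the product polytope $P_1\times P_2$, and then invoke the general geometric-rounding FPTAS of Mittal and Schulz. First I would introduce the $2r+2$ linear maps $u_p:=\transp{a_p}x$, $u_0:=\transp{c}x$ (for $x\in P_1$) and $v_p:=\transp{b_p}y$, $v_0:=\transp{d}y$ (for $y\in P_2$), $p=1,\dots,r$, so that the objective becomes $g(u,v)=\sum_{p=1}^r u_pv_p+u_0+v_0$. Under the hypotheses $\transp{a_p}x,\transp{c}x>0$ on $P_1$ and $\transp{b_p}y,\transp{d}y>0$ on $P_2$, every one of these functionals is strictly positive over its polytope, so $g$ is strictly positive and monotone nondecreasing in each of its $2r+2$ arguments. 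This is exactly the situation the Mittal--Schulz framework is built for, with $k=2r+2$ fixed.

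Second, I would set up the geometric grid. For each of the $2r+2$ functionals $q$ I would compute its range $[l_q,u_q]$ over the corresponding polytope by solving the two linear programs $\min q$ and $\max q$; this is possible in polynomial time since $P_1,P_2$ are given by inequalities or by a separation oracle, and positivity guarantees $l_q>0$, so the logarithmic ratio $\log(u_q/l_q)$ is finite. Choosing a step $\delta=\Theta(\epsilon/r)$ and the grid values $l_q(1+\delta)^i$ for $i=0,1,\dots,\lceil\log_{1+\delta}(u_q/l_q)\rceil$, the total number of grid target vectors $\sigma$ is $\prod_q\lceil\log_{1+\delta}(u_q/l_q)\rceil=O\big((\tfrac1\delta\log\tfrac{U}{L})^{2r+2}\big)$, which is polynomial for fixed $r$.

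Third, for each grid target $\sigma$ I would use monotonicity to reduce evaluation to a feasibility LP: find $(x,y)\in P_1\times P_2$ with every functional bounded above by the corresponding coordinate of $\sigma$. This decouples into one LP over $P_1$ and one over $P_2$, and whenever it is feasible the true objective at the returned point is at most $g(\sigma)$ by monotonicity. I would then return, among all feasible $\sigma$, the point minimizing $g(\sigma)$; to meet the extreme-point requirement I would, at the winning $\sigma$, additionally optimize a generic linear objective over the (nonempty) feasible face to move to a vertex of $P_1\times P_2$ without increasing any functional, hence without increasing $g$.

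For the approximation guarantee I would take an optimum $(x^*,y^*)$, look at its functional vector, and round each coordinate up to the next grid value, obtaining a feasible $\sigma$ with each coordinate inflated by at most $(1+\delta)$. Monotonicity together with the multiplicative form of $g$ then yields $g(\sigma)\le(1+\delta)^2\, g(f(x^*,y^*))\le(1+\epsilon)\cdot\mathrm{OPT}$ for $\delta$ chosen as above. The main obstacle I anticipate is precisely the verification of this ``bounded multiplicative perturbation'' step: the bilinear terms $u_pv_p$ inflate by $(1+\delta)^2$ while the linear terms $u_0,v_0$ inflate only by $(1+\delta)$, so one must confirm that the strict positivity of all $2r+2$ functionals --- which is exactly what the hypotheses supply --- is what simultaneously keeps the grid finite ($l_q>0$) and makes the rounding error multiplicatively controlled, so that a single $\delta=\Theta(\epsilon/r)$ converts into a $(1+\epsilon)$ bound. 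Everything else is routine linear-programming machinery.
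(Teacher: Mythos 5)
The paper offers no proof of this statement: it is an imported result, stated with attribution to Mittal and Schulz~\cite{mittal2013fptas}, and the text explicitly refers the reader to that paper for the details of the FPTAS. So your sketch can only be compared against the original Mittal--Schulz argument, and on that score you have reconstructed their geometric-rounding framework faithfully: the reduction to $k=2r+2$ strictly positive linear functionals, the geometric grid built from the ranges $[l_q,u_q]$ (with $l_q>0$ guaranteed because a strictly positive linear functional on a compact polytope has positive minimum), the decoupled feasibility LPs for each grid target, and the rounding analysis in which bilinear terms inflate by $(1+\delta)^2$ and linear terms by $(1+\delta)$. In their language, $g(u,v)=\sum_p u_pv_p+u_0+v_0$ is nondecreasing on the positive orthant and satisfies $g(\lambda u,\lambda v)\le\lambda^2 g(u,v)$, which is exactly the bounded-multiplicative-jump condition you verify; your choice $\delta=\Theta(\epsilon/r)$ is more conservative than needed (a constant exponent $2$ only requires $\delta=\Theta(\epsilon)$), but that is harmless.

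There is, however, one genuine gap: the extreme-point claim. Optimizing a generic linear objective over $\{(x,y)\in P_1\times P_2\colon \transp{a_p}x\le\sigma_p,\ \transp{b_p}y\le\tau_p,\ \transp{c}x\le\sigma_0,\ \transp{d}y\le\tau_0\}$ lands you at a vertex of that \emph{intersection}, not of $P_1\times P_2$: the added halfspaces cut the polytope and create new vertices (the feasible region of the grid LP is not a face of $P_1\times P_2$), so this step does not deliver what the theorem promises. The standard repair uses bilinearity rather than the grid. Given the feasible pair $(x,y)$ returned at the winning grid target, fix $y$; the objective is then linear in $x$, so minimizing it over $P_1$ yields a vertex $x'$ of $P_1$ with no larger objective value; then fix $x'$ and minimize the (now linear in $y$) objective over $P_2$ to reach a vertex $y'$ of $P_2$. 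The pair $(x',y')$ is an extreme point of $P_1\times P_2$ whose objective value is at most the $(1+\epsilon)$-approximate value you already certified. This repair is not cosmetic in the context of the present paper: the FPTAS is applied to $\copic(\f_1,\f_2,Q,c,d)$ precisely by identifying $\f_1,\f_2$ with the integral vertices of $P_1,P_2$ (e.g.\@ matroid base polytopes), so a solution that is not an extreme point does not correspond to any feasible solution of COPIC at all.
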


This result directly implies a FPTAS for $\copic(\f_{1}, \f_{2}, Q, c, d)$, if
$\rank(Q) = r$ and the sets $\f_{1},
\f_{2}$ can be represented as polytopes of polynomial size or polytopes with a polynomial
time separation oracle. This is for instance the case for matroid constraints.
See \cite{mittal2013fptas} for a detailed description of the FPTAS.

\subsection{General fixed rank COPIC via multi-parametric optimization}

To solve fixed rank COPIC when both sets of feasible solutions $\f_{1}$ and
$\f_{2}$ are constrained, we again apply methods from parametric optimization.
Since in many cases additional linear constraints of the form $\transp{a_{p}} x
= \lambda_{p}$ imply NP-hardness, we cannot follow an identical approach as
above. Instead, we analyze and solve multi-parametric objective versions for both sets of
feasible solutions directly. Given  linear cost vectors $a_{1}, a_{2}, \dots
a_{r} \in \r^{n}$ and $c \in \r^{n}$ in addition to a set of feasible solutions
$\f \subseteq \{0,1\}^{n}$, the problem of finding optimal solutions to
\begin{align*}
    \min\   & \sum_{p=1}^{r} \mu_p ( \transp{a_{p}} x) +
    \transp{c} x \\
    \text{s.t. } & x \in \f 
\end{align*}
for all possible values of $\mu \in \r^{r}$ is called multi-parametric linear
optimization over $\f$. In this section the number of vectors $a$ will always be
fixed to $r$. For every fixed $\mu \in \r^{r}$ this is equivalent to
solving an instance of $\lcop(\f, h)$ for $h = \sum_{p=1}^{r} \mu_{p} a_{p} +
c$. We denote this problem by $\mplcop(\f, a, c)(\mu)$.

It is well known that $\mplcop(\f, a, c)(\mu)$ is a piecewise-linear
concave function in $\mu$ on $\r^{r}$. For such a function the parameter space $\r^{r}$
can be partitioned into regions $M_{1}, M_{2}, \dots, M_{l}$, such that in each of
these regions the optimal objective value is linear in $\mu$ and for each $i =
1,2, \dots, l$ there exists a solution $x_{i} \in \f$ that achieves this value on the
whole region $M_{i}$. The smallest needed number $l$ of such regions is called the
parametric complexity of $\mplcop(\f, a, c)$.
B\"{o}kler and Mutzel~\cite{bokler2015output} showed that there is an
output-sensitive algorithm for $\mplcop(\f, a, c)$ to obtain all the 
solutions $x_{1}, x_{2}, \dots, x_{l}$ with running time
$O(\operatorname{poly}(n, m, l^{r}))$, if $\lcop(\f, h)$ can be solved in
polynomial time.

\smallskip

Given an instance of fixed rank COPIC
\begin{align*}
    \min\ & \sum_{p=1}^{r} (\transp{a_{p}} x) (\transp{b_{p}} y) + \transp{c} x +
    \transp{d} y\\
     \text{s.t.\ \ } & x \in \f'_{1}\\
     & y \in \f'_{2}
\end{align*}
and its optimal solution $(x^{*}, y^{*}) \in \f'_{1} \times \f'_{2}$, we observe 
that $x^{*}$ is an optimal solution to $\mplcop(\f'_{1}, a, c)(\mu^{*})$ for
$\mu_{p}^{*} = \transp{b_{p}} y^{*}$ and $y^{*}$ is an optimal solution to
$\mplcop(\f'_{2}, b, d)(\lambda^{*})$ for $\lambda_{p}^{*} = \transp{a_{p}}
x^{*}$. This yields the following approach for solving such instances of COPIC:

\begin{enumerate}
    \item Obtain optimal solutions $x_{1}, x_{2}, \dots, x_{l_{1}}$ for all
        possible parameter values $\mu$ of
        
        \noindent $\mplcop(\f'_{1}, a, c)(\mu)$ and $y_{1},
        y_{2}, \dots, y_{l_{2}}$ for all possible parameter values $\lambda$ of
        
        \noindent $\mplcop(\f'_{2}, b, d)$.
    \item Calculate their corresponding parameter values
        $\lambda^{(1)}, \lambda^{(2)}, \dots, \lambda^{(l_{1})}$ and $\mu^{(1)},
        \mu^{(2)}, \dots, \mu^{(l_{1})}$ as $\lambda^{(i)}_{p} = \transp{a_{p}} x_{i}$
        and $\mu^{(j)}_{p} = \transp{b_{p}} y_{j}$.
    \item For each pair $(x_{i}, y_{j})$ 
        check if $x_{i}$ is optimal for $\lcop(\f'_{1}, a, c)(\mu^{(j)})$ and $y_{j}$ is
        optimal for $\lcop(\f'_{2}, b, d)(\lambda^{(i)})$.
    \item Among all the pairs that fulfill conditions in (3), take the one with minimum objective value
        for our instance of COPIC.
\end{enumerate}
To guarantee that this method finds the optimal solution $(x^{*}, y^{*})$ the two given instances of
$\mplcop$ must be non-degenerate. This can be guaranteed by appropriate
perturbations of the cost vectors. Based on the algorithm of B\"{o}kler and
Mutzel~\cite{bokler2015output} we obtain the following result.

\begin{theorem}
    Let $l_{1}, l_{2}$ be the parametric complexity of $\mplcop(\f'_{1}, a, c),
    \mplcop(\f'_{2}, b, d)$ respectively, and
    $\rank(Q) = r$ is a constant. If both
    $\lcop(\f_{1}, h)$ and $\lcop(\f_{2}, h)$ can be solved in polynomial time
    for arbitrary linear cost vectors $h$,
    then $\copic(\f_{1}, \f_{2}, Q, c, d)$ can be solved in
    $O(\operatorname{poly}(n, m, l_{1}^{r}, l_{2}^{r}))$ time.
\end{theorem}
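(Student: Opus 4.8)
The plan is to establish correctness of the four-step algorithm described immediately above the theorem and then bound its running time. The structural backbone is the observation preceding the theorem: for any optimal pair $(x^{*}, y^{*})$ of $\copic(\f_{1}, \f_{2}, Q, c, d)$, the vector $x^{*}$ is optimal for $\mplcop(\f'_{1}, a, c)(\mu^{*})$ with $\mu_{p}^{*} = \transp{b_{p}} y^{*}$, and symmetrically $y^{*}$ is optimal for $\mplcop(\f'_{2}, b, d)(\lambda^{*})$ with $\lambda_{p}^{*} = \transp{a_{p}} x^{*}$. I would first verify this rigorously by fixing $y = y^{*}$ in the COPIC objective: the interaction term collapses to $\sum_{p} (\transp{b_{p}} y^{*})(\transp{a_{p}} x)$, so the objective becomes $\sum_{p} \mu_{p}^{*} (\transp{a_{p}} x) + \transp{c} x$ plus the constant $\transp{d} y^{*}$, which is exactly the $\mplcop(\f'_{1}, a, c)(\mu^{*})$ objective. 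Optimality of $(x^{*}, y^{*})$ forces $x^{*}$ to minimize this over $\f'_{1}$, and the symmetric argument handles $y^{*}$.

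For correctness I would then argue that the enumerations produced in step (1) contain $x^{*}$ and $y^{*}$. After perturbing the cost vectors to make both $\mplcop$ instances non-degenerate, each parameter value admits a unique optimal solution, so the region representatives $x_{1}, \dots, x_{l_{1}}$ and $y_{1}, \dots, y_{l_{2}}$ are precisely the distinct vertex solutions optimal for some parameter. Since $x^{*}$ is optimal for the specific value $\mu^{*}$, it coincides with the representative of the region containing $\mu^{*}$, hence $x^{*} \in \{x_{1}, \dots, x_{l_{1}}\}$; likewise $y^{*} \in \{y_{1}, \dots, y_{l_{2}}\}$. Writing $x^{*} = x_{i}$ and $y^{*} = y_{j}$, the parameter values computed in step (2) satisfy $\mu^{(j)}_{p} = \transp{b_{p}} y_{j} = \mu^{*}_{p}$ and $\lambda^{(i)}_{p} = \transp{a_{p}} x_{i} = \lambda^{*}_{p}$, so the pair $(x_{i}, y_{j})$ passes both optimality checks in step (3). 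As every pair surviving step (3) lies in $\f'_{1} \times \f'_{2}$ and is therefore a feasible COPIC solution whose cost is at least the optimum, the minimum-cost surviving pair selected in step (4) must equal $(x^{*}, y^{*})$.

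For the running time I would invoke the output-sensitive algorithm of B\"{o}kler and Mutzel: since $\lcop(\f_{1}, h)$ and $\lcop(\f_{2}, h)$ are solvable in polynomial time for arbitrary $h$, step (1) produces all $l_{1}$ solutions for $\f'_{1}$ in $O(\operatorname{poly}(n, m, l_{1}^{r}))$ time and all $l_{2}$ solutions for $\f'_{2}$ in $O(\operatorname{poly}(n, m, l_{2}^{r}))$ time. Step (2) computes the $O(l_{1} + l_{2})$ parameter vectors via $O(rm)$ and $O(rn)$ inner products each. Step (3) iterates over the $l_{1} l_{2}$ pairs, solving two polynomial-time $\lcop$ instances per pair to verify the checks, and step (4) compares the $O(l_{1} l_{2})$ objective values. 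Summing these contributions yields the claimed $O(\operatorname{poly}(n, m, l_{1}^{r}, l_{2}^{r}))$ bound.

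The step I expect to be the main obstacle is guaranteeing that the discrete optimizer $x^{*}$ actually appears among the continuous region representatives, i.e.\ the non-degeneracy requirement flagged in the algorithm description. The delicate point is that $\mu^{*}$ could a priori fall on a boundary between characteristic regions, where several vertices tie for optimality and the enumeration need not list $x^{*}$. Since the relevant parameter values are exactly those of the form $\transp{b_{p}} y$ for $y \in \f'_{2}$ (and symmetrically $\transp{a_{p}} x$ for $x \in \f'_{1}$), there are only finitely many of them, so a single generic perturbation of $c$ and $d$ can push all of them into region interiors simultaneously; I would make this perturbation argument precise to close the proof.
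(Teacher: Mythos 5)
Your proposal is correct and takes essentially the same route as the paper: the paper's proof is exactly the material preceding the theorem, namely the observation that an optimal pair $(x^{*},y^{*})$ consists of parametric optima at $\mu^{*}_{p}=\transp{b_{p}}y^{*}$ and $\lambda^{*}_{p}=\transp{a_{p}}x^{*}$, the four-step enumerate-and-check algorithm, the remark that non-degeneracy is ensured by perturbing the cost vectors, and the running-time bound from B\"okler and Mutzel's output-sensitive algorithm. Your closing discussion of pushing the finitely many relevant parameter values off region boundaries is a faithful (and slightly more explicit) elaboration of what the paper disposes of with ``appropriate perturbations of the cost vectors.''
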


If $\f$ is the set of bases of a matroid, Ganley~et~al.~\cite{ganley1995multi}
showed that the parametric complexity of  $\mplcop(\f, a,
c)$ for arbitrary $a, c$ over the whole parameter region $\r^{r}$ is polynomially bounded, if $r$ is fixed.

\begin{theorem}[Ganley et al.~\cite{ganley1995multi}]\label{thmganley}
    If $\f = \base(\M)$ is the set of bases of a matroid $\M$ with $n$ elements,
    the parametric complexity of $\mplcop(\f, a, c)$ for arbitrary
    $a$ and $c$ is bounded by $O(n^{2r-2})$.
\end{theorem}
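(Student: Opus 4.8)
The plan is to reduce the parametric problem to the combinatorics of the greedy algorithm and then to a hyperplane arrangement. First I would observe that for any fixed $\mu$ the problem $\mplcop(\f, a, c)(\mu)$ is a single instance of $\lcop(\base(\M), h)$ with weights $h_i(\mu) = c_i + \sum_{p=1}^{r} \mu_p a^{(p)}_i$, which the greedy algorithm solves: sort the ground set by increasing $h_i(\mu)$ and insert elements greedily, keeping independence. The crucial point is that the basis returned by greedy depends on $\mu$ only through the \emph{linear order} that $\mu$ induces on the values $h_1(\mu), \dots, h_n(\mu)$. Hence any two parameter vectors that sort the elements in the same way yield the same optimal basis, so the number of distinct optimal bases, and therefore the parametric complexity, is at most the number of distinct orders realized as $\mu$ ranges over $\r^{r}$.

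Next I would translate this into geometry. For each pair $i < j$ the locus where $h_i(\mu) = h_j(\mu)$ is the hyperplane $H_{ij} = \{\mu \in \r^{r} : \langle v_i - v_j, \mu\rangle = c_j - c_i\}$, where $v_i = (a^{(1)}_i, \dots, a^{(r)}_i)$. Inside each full-dimensional cell of the arrangement $\mathcal{A}$ formed by these $\binom{n}{2}$ hyperplanes the relative order of all weights is constant, so the greedy basis is constant on the cell; thus the parametric complexity is bounded by the number of cells of $\mathcal{A}$. Equivalently, I would identify each basis $B$ with the lifted point $\bigl(\sum_{i \in B} v_i,\ \sum_{i \in B} c_i\bigr) \in \r^{r+1}$ and note that the ever-optimal bases are exactly the vertices of the lower hull of this point set, i.e. the vertices of the projection of the matroid base polytope onto $\r^{r+1}$; the number of full-dimensional regions of $\mplcop(\f, a, c)$ equals the number of such vertices.

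Finally I would bound the size of this structure. A crude count already yields a polynomial bound, since an arrangement of $\binom{n}{2} = O(n^{2})$ hyperplanes in $\r^{r}$ has $O(n^{2r})$ cells for fixed $r$. The real work is to sharpen this to $O(n^{2r-2})$, and this is exactly where the matroid structure must be used rather than a generic arrangement. The key observation is that two adjacent cells separated by a wall on $H_{ij}$ give the \emph{same} basis unless crossing $H_{ij}$ triggers a genuine exchange $B \mapsto B - i + j$; only these ``active'' walls create new bases. I would therefore count new bases by counting active walls, organized hyperplane by hyperplane: on each $H_{ij}$ the active crossings are governed by the induced arrangement of the remaining tie-hyperplanes inside the $(r-1)$-dimensional space $H_{ij}$, which sets up an induction on $r$ (equivalently, it bounds the number of vertices of the projected base polytope). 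The main obstacle is precisely this last step, namely showing that the exchange property forces enough of the $O(n^{2r})$ cells to share a basis that the count drops by the full factor $n^{2}$; the greedy reduction and the passage to the arrangement are routine, and the saving that distinguishes matroids from an arbitrary arrangement is the crux.
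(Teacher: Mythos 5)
The paper itself offers no proof of this statement---it is quoted from Ganley et al.---so your attempt has to be judged on its own merits. Your first two steps are sound, and they are in fact the standard argument: by the greedy/matroid optimality criterion the optimal basis depends on $\mu$ only through the ordering of the weights $h_{i}(\mu)$, that ordering is constant on each cell of the arrangement of the $\binom{n}{2}$ tie hyperplanes $H_{ij}$, and an arrangement of $O(n^{2})$ hyperplanes in $\r^{r}$ has $O(n^{2r})$ cells. But this yields only $O(n^{2r})$, and the remaining factor-$n^{2}$ improvement---which you explicitly flag as the step you cannot complete---is a genuine gap. Moreover, the route you propose for closing it (matroid exchange forcing adjacent cells to share a basis, an induction over ``active walls'') is not how the cited bound is actually obtained, and there is no reason to believe it can be made to work: the argument up to that point uses the matroid only through ``ordering determines the optimum,'' and nothing in your proposal extracts a further factor of $n^{2}$ from the exchange axiom.

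The true source of the exponent $2r-2$ is not matroid structure but \emph{homogeneity} of the parametrization. In Ganley et al.'s setting each element carries $r$ weights and the cost used is the homogeneous combination $\sum_{p} \mu_{p} a^{(p)}_{i}$ with no fixed additive term; since rescaling $\mu$ by a positive constant changes no ordering, the parameter space is effectively a sphere/projective space of dimension $r-1$, and the identical cell count performed in dimension $r-1$ gives $O\bigl((n^{2})^{r-1}\bigr) = O(n^{2r-2})$. In the formulation used in this paper, where an unparametrized cost $c$ is added to $r$ free parameters (so effectively $r+1$ weight vectors with one coefficient pinned to $1$), the parametrization is affine rather than homogeneous, the dimension reduction is unavailable, and your $O(n^{2r})$ is what the argument genuinely yields; note that for $r=1$ the stated bound would read $O(1)$, which is false for one-parameter parametric spanning trees, so the discrepancy reflects a mismatch between the paper's $r$ and Ganley et al.'s number of weights rather than a deficiency a cleverer matroid argument should repair. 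In short: keep your reduction to orderings and the arrangement count, but replace the proposed exchange-based induction by the observation that, in the homogeneous setting to which the citation refers, the arrangement lives in a parameter space of dimension $r-1$, not $r$.
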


This implies a polynomial time algorithm for $\copic(\base(\M_{1}),
\base(\M_{2}), Q, c, d)$ for arbitrary matroids $\M_{1}, \M_{2}$ and fixed rank
matrix $Q$.
For rank 1 problems Eppstein~\cite{eppstein1998geometric} gives stronger bounds
for the parametric complexity. It is conjectured that for higher rank
matrices and bases of matroids as feasible solution,
stronger bounds than the one given in Theorem~\ref{thmganley} can be achieved.
For other types of feasible solutions, like
paths or bipartite matchings, this approach does not yield polynomial time
algorithms.

Another set of feasible solutions for which this approach yields a polynomial
time algorithm are global cuts in a graph. Karger~\cite{karger2016enumerating}
 currently gives the best bound for the parametric complexity of cuts and
 obtains several other related sets of feasible solutions with similar polynomial bounds.
In this case we are even able to bound the number of distinct cuts that can become optimal,
instead of just the parametric complexity, so degeneracy is not even an issue here.

\begin{theorem}[Karger~\cite{karger2016enumerating}]
    If $\f = \cuts(G)$ the number of cuts that can become optimal in $\mplcop(\f, a,
    c)$ for arbitrary $a$ and $c$ over all choices of $\mu$ is bounded by $O(n^{r+1})$.
\end{theorem}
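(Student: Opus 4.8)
The statement is a result of Karger~\cite{karger2016enumerating}, so the plan is to reconstruct his random-interleaving argument. First I would recast the problem geometrically. Writing $x_{C} \in \{0,1\}^{|E(G)|}$ for the incidence vector of the edge set of a cut $C \in \cuts(G)$, the objective of $\mplcop(\f, a, c)(\mu)$ evaluated at $C$ is the affine function
\[
    w_{C}(\mu) = \transp{c} x_{C} + \sum_{p=1}^{r} \mu_{p} (\transp{a_{p}} x_{C}),
\]
so the optimal value $V(\mu) = \min_{C} w_{C}(\mu)$ is the lower envelope of finitely many affine functions on $\r^{r}$, hence piecewise-linear and concave. A cut ``becomes optimal'' precisely when it labels a full-dimensional cell of this envelope, and the quantity to bound is the number of distinct such labels.

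Second, to count these labels I would extend Karger's random contraction algorithm, working in the nonnegative-weight regime to which the relevant parameter range reduces. Recall that for a single nonnegative weight vector $h$, repeatedly contracting a random edge (chosen with probability proportional to its weight) until two supernodes remain produces any fixed minimum cut with probability at least $1/\binom{n}{2}$, which is what yields the $O(n^{2})$ bound on the number of minimum cuts. The plan is to run an \emph{interleaved} version of this process that samples, at each contraction step, one of finitely many representative parameter vectors $\mu$ and contracts according to the corresponding weights $h(\mu) = c + \sum_{p} \mu_{p} a_{p}$. The goal is to show that any cut $C^{*}$ that is optimal for some $\mu^{*}$ survives this interleaved process with probability at least $1/O(n^{r+1})$; since the algorithm outputs a single cut, distinct surviving cuts correspond to disjoint events, and a union/expectation bound then caps the number of distinct optimal cuts at $O(n^{r+1})$.

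Third, I would carry out the survival-probability analysis, which I expect to be the main obstacle. The difficulty is that a cut which is cheap at $\mu^{*}$ may be expensive at other parameter values, so a contraction schedule tuned to a single weight vector only protects cuts optimal at that vector; the interleaving must simultaneously protect $C^{*}$ across the whole cell of $\r^{r}$ on which it is optimal. The crux is to design the interleaving schedule and to bound, step by step, the probability that a contraction destroys $C^{*}$ uniformly over the relevant parameter directions; this is exactly where the extra factor $n^{r-1}$ beyond the base $n^{2}$ enters, accounting for the $r$ parametric degrees of freedom. For $r=1$ this recovers the $O(n^{2})$ bound on distinct parametric minimum cuts, and the bookkeeping over the $r$ directions produces the general $O(n^{r+1})$ estimate. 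Once the per-cut survival bound is established, the final counting step is routine.
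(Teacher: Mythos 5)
The paper gives no proof of this statement at all: it is imported directly from Karger~\cite{karger2016enumerating}, so the only meaningful reconstruction is to actually carry out Karger's argument. Your proposal correctly identifies the architecture of that argument (lower envelope of affine cut-value functions, random contraction, interleaving across parameter values, then the disjoint-events counting step), but the proof itself is absent. The entire mathematical content of the theorem lives in your third step, which you explicitly defer: specifying the interleaved contraction schedule (which weight functions $h(\mu)$ are used at which steps, with what probabilities, and how the sampled parameter values can relate to the unknown region on which $C^{*}$ is optimal) and proving, contraction by contraction, that a cut $C^{*}$ minimum at some $\mu^{*}$ survives with probability $\Omega(n^{-(r+1)})$. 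Saying that the factor $n^{r-1}$ beyond the base $n^{2}$ ``enters where the parametric degrees of freedom are accounted for'' describes the answer rather than deriving it; the concluding union/expectation bound is indeed routine, but everything before it is precisely what Karger's paper exists to establish. As it stands this is a plan for a proof, not a proof. (A smaller imprecision: the theorem counts every cut that is optimal for \emph{some} $\mu$, including cuts optimal only on lower-dimensional pieces of the envelope, so identifying optimal cuts with full-dimensional cells undercounts unless you add a perturbation argument.)

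There is also a hypothesis being waved away. The contraction algorithm's $1/\binom{n}{2}$ survival guarantee requires nonnegative edge weights, and you assert that the problem reduces to ``the nonnegative-weight regime'' without justification. For arbitrary $a$, $c$ and $\mu \in \r^{r}$, the weights $c + \sum_{p} \mu_{p} a_{p}$ can be negative on part of the parameter space, and no reduction is available in general: minimizing a cut under weights of arbitrary sign contains max-cut, so even a single evaluation of $\mplcop(\cuts(G), a, c)(\mu)$ is NP-hard there, and the $O(n^{r+1})$ count cannot be expected to follow from a contraction argument on that region. Karger's theorem is proved for parameter domains on which all edge weights remain nonnegative; a faithful reconstruction must either state that hypothesis explicitly or argue why the restriction suffices for the way the bound is invoked in this paper's multi-parametric algorithm. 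Your proposal does neither, so both the central survival lemma and the weight-sign issue remain open gaps.
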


Already for the case $r=1$ subexponential lower bounds for
the parametric complexity of $\mplcop(\f, a, c)$ for paths ($\f =
\paths_{s,t}(G)$) and matchings ($\f = \pmatch(G)$) in a graph $G$ are known
(Gusfield~\cite{gusfield1980sensitivity},
Carstensen~\cite{carstensen1984parametric}). However, in the
setting of smoothed
analysis Brunsch and R\"{o}glin~\cite{brunsch2015improved} showed that
the parametric complexity of $\mplcop(\f, a, c)$ is bounded by $O(n^{2r} \phi^{r})$ for every perturbation
parameter $\phi \geq 1$, all costs $a,c$ and arbitrary sets of feasible solutions $\f$.

\section{Diagonal interaction matrix}\label{sec:diagonal}

In this section we analyze the special case of COPIC, refereed to as \emph{diagonal COPIC}, where for a given vector
$a \in \r^{n}$ the matrix $Q = (q_{ij})$ is given as the diagonal $n\times n$ matrix 
\[ q_{ij} = \begin{cases}
        a_{i} & \text{if } i=j\\
        0 & \text{otherwise}.
\end{cases} \]
This results in finding solutions $S_{1} \in \f_{1} \subseteq \{0,1\}^{n}$ and $S_{2} \in \f_{2} \subseteq \{0,1\}^{n}$ that
minimize the objective function
\[ f(S_1,S_2) = \sum_{i\in S_1 \cap S_{2}} a_{i} + \sum_{i\in S_1}c_i +
\sum_{j\in S_2}d_j. \]
Such instances are denoted by $\copic(\f_{1}, \f_{2}, \diag(a), c, d)$.

Already this very restricted version of COPIC includes many well-studied
problems of combinatorial optimization. For example, problems that ask for two
disjoint combinatorial structures among an element set can all be handled by
solving COPIC with identity interaction matrix $Q = I$ and $c=d=0$. This includes the disjoint spanning tree problem~\cite{roskind1985note}, disjoint matroid base
problem~\cite{gabow1992forests}, disjoint path
problems~\cite{vygen1994disjoint,frank1988packing}, disjoint matchings problem~\cite{Frieze83}
and many others. Bernáth and Király~\cite{bernath2015tractability} analyzed the
computational complexity of many
combinations of different packing, covering and partitioning problems on graphs
and matroids. It is easy to model all of these problems as instances of diagonal
COPIC. The hardness results for packing problems in this paper directly imply
NP-hardness results for diagonal COPIC with $Q=I$ and $c=d=0$ for several classes of
problems. 
In this section we further investigate complexity of diagonal COPIC.
Some results investigated in this section are summarized in Table~\ref{tab:diagonal-summary}.

\begin{table}[h]
    \begin{tabularx}{\textwidth}{l | X X X X X }
        \toprule
        $\f_{1}\ \setminus\ \f_{2}$  & $2^{[n]}$ & $\base(\umat{n}{k_{2}})$ &
        $\base(\M_{2})$ & $\pmatch(G)$ & $\paths_{s_{2}, t_{2}}(G)$  \\
        \midrule
        $2^{[n]}$ & $O(n)$  & P & P & P & P \\
        $\base(\umat{n}{k_{1}})$ &  & P  & P
        \tiny{($c=d=0$)} & open & open \\
        $\base(\M_{1})$ & & & P \tiny{($c=d=0$)}
         & open & NP-hard \\
        $\pmatch(G)$ &  & & & NP-hard \cite{Frieze83} & open \\
        $\paths_{s_{1}, t_{1}}(G)$ &  & & & & NP-hard  \\
        \midrule
    \end{tabularx}
	\smallskip
    \caption{Summary of complexity results for COPIC with a diagonal matrix}    
	\label{tab:diagonal-summary}
\end{table}

\subsection{Unconstrained feasible sets}
\label{sec:diagonal-unconstrained}

We start by considering diagonal COPIC with unconstrained feasible sets.
\begin{theorem}\label{uncondiag}
    $\copic(2^{[n]}, 2^{[n]}, \diag(a), c, d)$ can be
    solved in linear time.
\end{theorem}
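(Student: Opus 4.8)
The plan is to exploit the fact that, with both feasible sets unconstrained, the objective function decomposes completely across the ground elements, so that each index can be optimized independently. Introducing incidence variables $x_i = 1$ iff $i \in S_1$ and $y_i = 1$ iff $i \in S_2$, the objective becomes
\[ f(S_1, S_2) = \sum_{i=1}^{n} \left( a_i x_i y_i + c_i x_i + d_i y_i \right), \]
since index $i$ contributes $a_i$ exactly when $i \in S_1 \cap S_2$ (i.e.\@ $x_i y_i = 1$), contributes $c_i$ exactly when $i \in S_1$, and contributes $d_i$ exactly when $i \in S_2$. Because $\f_1 = \f_2 = 2^{[n]}$, there are no constraints coupling the pairs $(x_i, y_i)$ for distinct indices, so minimizing $f$ is equivalent to independently minimizing each summand over $(x_i, y_i) \in \{0,1\}^2$.

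The second step is to evaluate, for each fixed index $i$, the minimum of $a_i x_i y_i + c_i x_i + d_i y_i$ over the four possible assignments of $(x_i, y_i)$. The assignments $(0,0)$, $(1,0)$, $(0,1)$, $(1,1)$ yield contributions $0$, $c_i$, $d_i$, and $a_i + c_i + d_i$ respectively, so the per-index optimum is
\[ m_i := \min\{0,\ c_i,\ d_i,\ a_i + c_i + d_i\}. \]
The optimal value of the instance is therefore $\sum_{i=1}^n m_i$, and an optimal pair $(S_1, S_2)$ is recovered by selecting, for each $i$, the membership pattern that attains $m_i$.

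Computationally this gives a single-pass algorithm: iterate once over the $n$ indices, compute each $m_i$ as the minimum of four precomputed quantities, accumulate the running sum, and record the minimizing $(x_i, y_i)$ to reconstruct the two solution sets at the end. Each index requires only $O(1)$ arithmetic operations, for a total running time of $O(n)$.

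The argument is essentially self-contained, and I do not expect a genuine obstacle. The one point that carries the whole proof is the separability established in the first step, namely the observation that the absence of any constraints on $S_1$ and $S_2$ is precisely what licenses the independent optimization of the per-index contributions; once this is made explicit, the remaining steps are routine bookkeeping.
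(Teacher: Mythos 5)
Your proof is correct and follows essentially the same approach as the paper: both enumerate the four membership patterns per element, observe that the absence of constraints lets each element be optimized independently as $\min\{0,\ c_i,\ d_i,\ a_i+c_i+d_i\}$, and conclude an $O(n)$ running time. Your incidence-variable formulation just makes the separability slightly more explicit than the paper's case analysis, but the argument is the same.
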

\begin{proof}
    For each $e \in [n]$ independently we have four different choices:
    \begin{itemize}
        \item $e \notin S_{1} \cup S_{2}$: this contributes $0$ to $f(S_{1}, S_{2})$
        \item $e \in S_{1}, i \notin S_{2}$: this contributes $c_{e}$ to
            $f(S_{1}, S_{2})$
        \item $e \notin S_{1}, i \in S_{2}$: this contributes $d_{e}$ to
            $f(S_{1}, S_{2})$
        \item $e \in S_{1} \cap S_{2}$: this contributes $a_{e} + c_{e} + d_{e}$
            to $f(S_{1}, S_{2})$
    \end{itemize}
    So for each $e \in [n]$ we can independently find $\min\{ 0, c_{e}, d_{e},
    a_{e} + c_{e} + d_{e} \}$ and select the corresponding solution accordingly. This can be done in
    constant time for each $e \in [n]$, so the overall running time is $O(n)$.
\end{proof}

The result of Theorem~\ref{uncondiag} can be generalized. Using a
straightforward dynamic programming approach, $\copic(\f_1,\f_2,Q,c,d)$ with matrix $Q$ of bandwidth $O(\log n)$ can be solved in polynomial time. This result is presented as Theorem~2.11 in the PhD thesis of Sripratak~\cite{Sripratak}.

\begin{theorem}
    $\copic(\f, 2^{[n]}, \diag(a), c, d)$ can be solved by solving
    $\lcop(\f, f)$, where $f_{i} = \min\{c_{i} + d_{i} + a_{i}, c_{i}\} -
    \min\{d_{i}, 0\}$ for each $i \in [n]$.
\end{theorem}
\begin{proof}
    For each $i \in [n]$ we can determine independently if it should be
    included in $S_{2}$, given that it is included in $S_{1}$ or not. The
    cost for an element $i\in [n]$ is therefore uniquely determined as $f_{1}(i) =
    \min\{ c_{i} + d_{i} + a_{i}, c_{i} \}$, if $i \in S_{1}$ and as $f_{2}(i) = 
    \min\{ d_{i}, 0 \}$ if $i \notin S_{1}$. So we can determine an optimal solution $S_{1}$ by solving the
    minimization problem over $\f$ for the linear cost function $f_{1} -
    f_{2}$. The corresponding corresponding optimal $S_{2}$ can be easily
    obtained in $O(n)$ time.
\end{proof}

\subsection{Uniform and Partition Matroids}
\label{sec:diagonal-uniformmat}

In the following two subsections we investigate diagonal COPIC where $\f_1$ and
$\f_2$ correspond to bases of different types of matroids. For bases of uniform
and partition matroids, which are defined by standard cardinality constraints,
the main insight is that we can solve our problem in polynomial time using matching
algorithms.

Given a graph $G=(V,E)$ and a function $b \colon V \ra 2^{\NN}$ an edge set $M
\subseteq E$ is a $b$-factor, if $|M \cap \delta(v)| \in b(v)$ for each $v \in
V$. If $b(v) = \{k\}$ for some integer $k \in \NN$ we simply write $b(v) =
k$. Given an additional cost function $c \colon E \ra \r$ a minimum cost
$b$-factor can be found in polynomial time, if all the $b$-values $b(v)$ are
sequences of consecutive integers $[b_{1}; b_{2}] = \{b_{1}, b_{1}+1, \dots,
b_{2}\}$ (see~\cite[Section 10.2]{lovasz2009matching}).

\begin{theorem}\label{thm:uniformmatroid-matching}
    $\copic(\base(\umat{n}{k_{1}}), \base(\umat{n}{k_{2}}), \diag(a), c, d)$ can be  solved in polynomial time.
\end{theorem}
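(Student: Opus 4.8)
The plan is to reduce the problem to polynomially many minimum-cost bipartite $b$-factor (equivalently, transportation) problems, one for each possible size of the intersection $S_1 \cap S_2$. The starting point is the per-element state analysis already used in the proof of Theorem~\ref{uncondiag}: for each element $i \in [n]$ there are exactly four possible states, contributing $0$ if $i \notin S_1 \cup S_2$, $c_i$ if $i \in S_1 \setminus S_2$, $d_i$ if $i \in S_2 \setminus S_1$, and $a_i + c_i + d_i$ if $i \in S_1 \cap S_2$. Thus a feasible solution is exactly a partition of $[n]$ into four classes $B, L_1, L_2, N$ (both, only $S_1$, only $S_2$, neither), and the only coupling between the elements comes from the two global cardinality constraints $|B| + |L_1| = k_1$ and $|B| + |L_2| = k_2$.

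The main obstacle is the bilinear term $\sum_i a_i x_i y_i$ in the presence of these two constraints: a single min-cost-flow model would have to charge $a_i$ precisely when an element lies in both sets, which no flow gadget can enforce once $a_i$ is allowed to be negative. I would circumvent this by fixing the intersection size $t := |S_1 \cap S_2| = |B|$. Once $t$ is fixed, the two equalities force $|L_1| = k_1 - t$, $|L_2| = k_2 - t$, and $|N| = n - k_1 - k_2 + t$, so all four class sizes are determined and the objective becomes a purely \emph{linear} cost of assigning each element to one of the four classes. The feasible values of $t$ are the integers with $\max\{0, k_1 + k_2 - n\} \le t \le \min\{k_1, k_2\}$, of which there are only $O(n)$.

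For each fixed $t$ I would then solve the resulting assignment as a minimum-cost bipartite $b$-factor. Take a bipartite graph with one element vertex $v_i$ for each $i \in [n]$ and four category vertices $w_B, w_{L_1}, w_{L_2}, w_N$, joining $v_i$ to each category vertex by an edge carrying the corresponding per-element cost ($a_i + c_i + d_i$ to $w_B$, $c_i$ to $w_{L_1}$, $d_i$ to $w_{L_2}$, and $0$ to $w_N$). Set $b(v_i) = 1$ for every element vertex and $b(w_B) = t$, $b(w_{L_1}) = k_1 - t$, $b(w_{L_2}) = k_2 - t$, $b(w_N) = n - k_1 - k_2 + t$. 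A $b$-factor of this graph assigns each element to exactly one class with the prescribed class sizes, so the $b$-factors are in cost-preserving bijection with the feasible solutions having $|S_1 \cap S_2| = t$. Since every $b$-value is a single integer (a degenerate consecutive-integer sequence, exactly the case $b(v) = k$ noted above), the minimum-cost $b$-factor can be computed in polynomial time by the algorithm cited above (\cite[Section~10.2]{lovasz2009matching}), and this handles arbitrary, possibly negative, costs.

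Finally, I would take the minimum over the $O(n)$ admissible values of $t$. Each of the $O(n)$ $b$-factor computations runs in polynomial time, so the whole procedure is polynomial, proving the theorem. The only point requiring care is verifying that the $b$-factor constraints faithfully encode both the partition of $[n]$ into the four classes and the exact class sizes; this is immediate from $b(v_i) = 1$ together with the choice of the category $b$-values, whose sum equals $n$.
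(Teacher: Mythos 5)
Your proof is correct, but it takes a genuinely different route from the paper's. The paper solves the problem with a \emph{single} (non-bipartite) $b$-factor instance: for each element $i$ it builds a triangle gadget on vertices $i_{x}, i_{y}, i_{m}$ with $b(i_{m})=\{0,1\}$, attaches $i_{x}$ to a hub $x$ with $b(x)=k_{1}$ at cost $c_{i}+\frac{a_{i}}{2}$ and $i_{y}$ to a hub $y$ with $b(y)=k_{2}$ at cost $d_{i}+\frac{a_{i}}{2}$, and uses edges of cost $-\frac{a_{i}}{2}$ inside the triangle; the forced unique completion of each triangle then charges exactly $0$, $c_{i}$, $d_{i}$, or $a_{i}+c_{i}+d_{i}$ in the four cases, for arbitrary (also negative) $a_{i}$. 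You instead enumerate the intersection cardinality $t=|S_{1}\cap S_{2}|$ over its $O(n)$ admissible values; fixing $t$ pins down all four class sizes, turning each subproblem into a plain bipartite transportation/$b$-factor problem with degree-$1$ element vertices. Both reductions are valid and both invoke the same algorithmic engine from \cite{lovasz2009matching}, so your argument establishes Theorem~\ref{thm:uniformmatroid-matching}. What the two approaches buy: yours is more elementary (purely bipartite, no half-cost gadget, and negative costs are harmless since every $b$-factor has exactly $n$ edges), at the price of an extra $O(n)$ factor from the enumeration. The paper's gadget is more powerful in a way that matters for the sequel: it extends immediately to partition matroids (the corollary following the theorem splits $x$ and $y$ into one hub per partition class), whereas your enumeration exploits the fact that there are only \emph{two} global cardinality constraints; with partition matroids one would have to fix an intersection count for every pair of classes, and the number of such combinations is not polynomially enumerable. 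Also, your motivating claim that no flow gadget can charge $a_{i}$ on the intersection when $a_{i}<0$ should be softened: the paper's triangle construction shows that a single $b$-factor instance (albeit non-bipartite, hence outside the flow framework) does achieve exactly this.
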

\begin{proof}
    We create an equivalent instance of the minimum cost $b$-factor problem on a graph
    $G$ (see Figure~\ref{fig:uniformmatroid-matching}).
    To achieve this, we introduce two special vertices $x$ and
    $y$ with $b(x) = k_{1}$ and $b(y) = k_{2}$ and another $3n$ vertices $i_{x}$, $i_{y}$ and
    $i_{m}$ for $i=1,2,\dots,n$, i.e., for each element of the ground set of the
two matroids. We set $b(i_{x}) = b(i_{y}) = 1$ and $b(i_{m}) = \{0, 1 \}$.
    The $k_{1}$ vertices matched with $x$ and $k_2$ vertices matched with $y$ correspond to the sets $S_{1}$    and $S_{2}$, respectively.

    We introduce edges $\{x, i_{x} \}$ with cost $c_{i} + \frac{a_{i}}{2}$
    and $\{y, i_{y} \}$ with cost $d_{i} + \frac{a_{i}}{2}$. We also connect $\{
        i_{x}, i_{y} \}$ with edges of cost $0$ and  $\{ i_{x},
        i_{m} \}$ and $\{ i_{x}, i_{m} \}$ both with cost $-\frac{a_{i}}{2}$.
    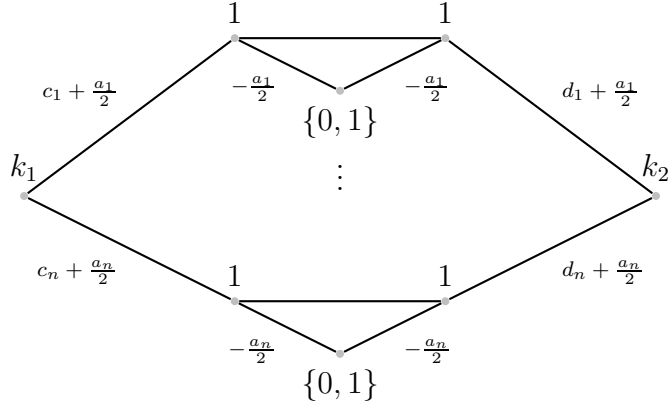
\begin{figure}[h]
        \begin{tikzpicture}[scale=0.7, auto,swap]

            \node[vertex,label=$k_{1}$] (x) at (0,-4) {};
            \node[vertex,label=$k_{2}$] (y) at (12,-4) {};

            \node[vertex,label=1] (x1) at (4,-1) {};
            \node[vertex,label=1] (y1) at (8,-1) {};
            \node[vertex,label=below:$\setzo$] (z1) at (6,-2) {};

            \node[vertex,label=1] (xn) at (4,-6) {};
            \node[vertex,label=1] (yn) at (8,-6) {};
            \node[vertex,label=below:$\setzo$] (zn) at (6,-7) {};

            \node (middle) at (6, -3.5) {\vdots};

            \path[edge] (x1) -- node[weight]{$c_{1} + \frac{a_{1}}{2}$} (x);
            \path[edge] (x) -- node[weight]{$c_{n} + \frac{a_{n}}{2}$} (xn);

            \path[edge] (y) -- node[weight]{$d_{1} + \frac{a_{1}}{2}$} (y1);
            \path[edge] (yn) -- node[weight]{$d_{n} + \frac{a_{n}}{2}$} (y);

            \path[edge] (x1) -- (y1);
            \path[edge] (x1) -- node[weight]{$-\frac{a_{1}}{2}$} (z1);
            \path[edge] (z1) -- node[weight]{$-\frac{a_{1}}{2}$} (y1);

            \path[edge] (xn) -- (yn);
            \path[edge] (xn) -- node[weight]{$-\frac{a_{n}}{2}$} (zn);
            \path[edge] (zn) -- node[weight]{$-\frac{a_{n}}{2}$} (yn);
        \end{tikzpicture}
        \caption{Illustration for the proof of
            Theorem~\ref{thm:uniformmatroid-matching}}
        \label{fig:uniformmatroid-matching}
    \end{figure}

    It is easy to see that there is a one-to-one mapping between feasible
    solutions of the given diagonal COPIC and this instance of the $b$-factor problem,  and moreover, the corresponding costs are the same.
    Any feasible $b$-factor $M$ must contain exactly $k_{1}$ edges of the form $\{x,
        i_{x}\}$ and $k_{2}$
        edges of the form $\{y, i_{y}\}$. These can be identified with the
        solution sets $S_{1}$ and $S_{2}$ for our diagonal COPIC. Given any
    such partial $b$-factor there exists exactly one completion to a feasible
    $b$-factor, using additional edges inside the triangles $i_{x},
    i_{y}, i_{m}$ for each $i \in [n]$, according to the following four cases.
    We can also directly
    observe that the cost of the enforced $b$-factor and the solution
    $S_{1}, S_{2}$ is the same.
    \begin{enumerate}
        \item $i \notin S_{1}, i \notin S_{2}$: Both $i_{x}$ and $i_{y}$ are 
            unmatched. The only way to match both is by using the single edge
            $\{i_{x}, i_{y}\}$ and leaving $i_{m}$ unmatched, which is feasible
            since $0 \in b(i_{m})$. The contribution to the total cost is $0$.
        \item $i \in S_{1}, i \notin S_{2}$: In this case $i_{x}$ is already
            matched but $i_{y}$ is still unmatched. The only feasible way to
            match $i_{y}$ is using the edge $\{i_{y}, i_{m}\}$, which
            contributes $c_{i}$ to the cost.
        \item $i \notin S_{1}, i \in S_{2}$: This case is symmetric to case (2). The cost
            contribution is $d_{i}$.
        \item $i \in S_{1}, i \in S_{2}$: In this case $i_{x}$ and $i_{y}$ are
            both already matched and $i_{m}$ cannot be matched anymore.
            We get a cost contribution of $a_{i} +
            c_{i} + d_{i}$.
    \end{enumerate}
\end{proof}

Given a partition $S_{1}, S_{2}, \dots, S_{t}$ of the ground set $E$ and
integers $g_{1}, g_{2}, \dots, g_{t}$, such that $0 \leq g_{i} \leq |B_{i}|$ for
all $i=1,2,\dots,t$, the set $\{ X \subseteq E \colon |X \cap S_{i}| = g_{i}
\text{ for all }i=1,2,\dots,t \}$ forms the collection of all bases of a partition matroid.

\begin{corollary}
    If $\M_1,\M_2$ are partition matroids, $\copic(\base(\M_{1}),
    \base(\M_{2}), \diag(a), c, d)$ can be solved in polynomial time.
\end{corollary}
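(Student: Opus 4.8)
The plan is to reduce the partition matroid case to the uniform matroid case, or more precisely, to directly adapt the $b$-factor construction from the proof of Theorem~\ref{thm:uniformmatroid-matching}. The key observation is that a partition matroid base problem decomposes across the blocks of its partition: a set $X$ is a base of the partition matroid $\M$ with partition $S_1,\dots,S_t$ and quotas $g_1,\dots,g_t$ precisely when, within each block $S_i$, exactly $g_i$ elements are selected, and these choices are independent across blocks. This is the same structural feature that made the uniform matroid tractable, except that the single cardinality constraint $|X|=k$ is replaced by several cardinality constraints, one per block.

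First I would set up a minimum cost $b$-factor instance that generalizes Figure~\ref{fig:uniformmatroid-matching}. Since $\M_1$ and $\M_2$ may use different partitions, say $\M_1$ has blocks $S^{(1)}_1,\dots,S^{(1)}_{t_1}$ with quotas $g^{(1)}_1,\dots,g^{(1)}_{t_1}$ and $\M_2$ has blocks $S^{(2)}_1,\dots,S^{(2)}_{t_2}$ with quotas $g^{(2)}_1,\dots,g^{(2)}_{t_2}$, I would introduce a collector vertex $x_r$ with $b(x_r)=g^{(1)}_r$ for each block of $\M_1$, and a collector vertex $y_s$ with $b(y_s)=g^{(2)}_s$ for each block of $\M_2$. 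The per-element gadget remains the triangle on $i_x, i_y, i_m$ with $b(i_x)=b(i_y)=1$, $b(i_m)=\{0,1\}$, and identical edge costs $c_i+\frac{a_i}{2}$, $d_i+\frac{a_i}{2}$, $0$, $-\frac{a_i}{2}$, $-\frac{a_i}{2}$ as before. The only change is that the edge $\{x,i_x\}$ now connects $i_x$ to the collector $x_r$ of the unique $\M_1$-block containing $i$, and similarly $\{y,i_y\}$ connects to the appropriate $\M_2$-block collector.

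With this construction, a feasible $b$-factor must pick exactly $g^{(1)}_r$ incident edges at each collector $x_r$ and $g^{(2)}_s$ at each $y_s$, which is exactly the statement that the selected sets $S_1,S_2$ are bases of $\M_1$ and $\M_2$ respectively. The four-case analysis of how each triangle is completed, and the verification that the $b$-factor cost equals $f(S_1,S_2)$, carries over verbatim from the proof of Theorem~\ref{thm:uniformmatroid-matching}, since the triangle gadgets are untouched and their behavior depends only on whether $i_x$ and $i_y$ are already matched to their collectors. The graph has $O(n + t_1 + t_2) = O(n)$ vertices, all $b$-values are singletons or the consecutive range $\{0,1\}$, so a minimum cost $b$-factor can be computed in polynomial time by the algorithm cited from~\cite[Section 10.2]{lovasz2009matching}, giving the claimed bound.

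I do not expect a genuine obstacle here; the result is essentially a bookkeeping generalization. The one point requiring slight care is ensuring the one-to-one correspondence between COPIC solutions and feasible $b$-factors still holds when the quotas $g^{(1)}_r$ or $g^{(2)}_s$ equal $0$ or $|S^{(1)}_r|$ (the degenerate blocks): in those cases every element of the block is forced out of or into the respective solution set, but this is automatically and correctly enforced by the collector's $b$-value together with the $b(i_x)=1$ constraint, so no additional argument is needed.
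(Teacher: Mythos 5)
Your proposal is correct and is essentially identical to the paper's own proof: the paper likewise replaces the two special vertices $x$ and $y$ by one collector vertex per block of each partition (with $b$-values equal to the block quotas), connects each element gadget only to the collectors of its blocks, and notes that the equivalence argument from Theorem~\ref{thm:uniformmatroid-matching} carries over unchanged. Your write-up simply spells out the details (including the degenerate-quota cases) that the paper leaves to the reader.
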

\begin{proof}
    To prove this theorem, we use the approach based on matchings as in the proof of Theorem~\ref{thm:uniformmatroid-matching}, with minor modifications. Instead of special vertices $x$ and $y$, we introduce $x$- and
    $y$-vertices for each set in the partition of the ground set and connect
    these vertices only to the $i_{x}, i_{y}$-vertices that are in the
    corresponding set of the partition. The equivalence of this construction
    can be shown analogously.
\end{proof}

One can even further generalize the concept of partition matroids.
Given a partition $S_{1}, S_{2}, \dots ,S_{t}$ of the
element set $E$ and integers $f_{1}, f_{2}, \dots, f_{t}$,$ g_{1}, g_{2}, \dots,
g_{t}$, $k$, such that $0 \leq f_{i} \leq g_{i} \leq |S_{i}|$ for all $i=1,2,\dots,t$ and $\sum_{i=1}^{t}
f_{i} \leq k \leq  \sum_{i=1}^{t} g_{i}$.
The set $\{ X \subseteq E \colon |X| = k \text{ and } f_{i} \leq |X \cap S_{i}|
\leq g_{i}\, \forall i=1,2,\dots,t \}$ is the set of bases of a generalized
partition matroid~\cite{frank2011connections}.
A similar approach based on matchings still applies, if $\f_{1}, \f_{2}$ are
sets of bases of a generalized partition matroids. 

\medskip

The combination of bases of a uniform matroid with other sets of feasible
solutions in diagonal COPIC is similar to different versions of linear problems with
capacity side constraints. The following result is an example that can be
derived using methods for
the well-studied constrained shortest path problem with uniform edge weights, for
which dynamic programming can be used to solve the problem in polynomial time
(see Dumitrescu and Boland~\cite{dumitrescu2001algorithms} for a review).

\begin{theorem}
    $\copic(\base(\umat{m}{k}), \paths_{s,t}(G), \diag(a), 0, d)$ can be
    solved in polynomial time, if $a \geq 0$ and $d \geq 0$.
\end{theorem}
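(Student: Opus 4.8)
=== PROOF PROPOSAL ===

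The problem is to minimize $\sum_{i\in S_1\cap S_2}a_i + \sum_{j\in S_2}d_j$ over all $k$-subsets $S_1\subseteq[m]$ and all $s$-$t$-paths $S_2$ in $G$, under the assumption $a\geq 0$, $d\geq 0$. The key structural observation is that $S_1$ is completely unconstrained except for its cardinality, and it interacts with $S_2$ only through the diagonal term $\sum_{i\in S_1\cap S_2}a_i$. Since $c=0$ and $S_1$ carries no independent cost, once $S_2$ is fixed the optimal $S_1$ wants to \emph{avoid} the expensive elements of $S_2$: the interaction penalty $a_i$ is incurred only for $i\in S_1\cap S_2$, and because $a\geq 0$ we never benefit from putting an element of $S_2$ into $S_1$.

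My plan is to reduce the joint problem to a constrained shortest path problem over $\paths_{s,t}(G)$ alone. First I would argue that for a fixed path $S_2$, the best choice of $S_1$ is obtained greedily. We must pick exactly $k$ elements of $[m]$ into $S_1$; every element of $[m]\setminus S_2$ is ``free'' (contributes $0$), while every element of $S_2$ chosen into $S_1$ costs its $a_i\geq 0$. Hence, letting $r=\lvert S_2\rvert$ denote the number of arcs/edges on the path, if $m-r\geq k$ there are enough free elements and the optimal interaction cost contributed by $S_1$ is $0$; otherwise we are forced to include $k-(m-r)$ of the path's elements, and we should include the cheapest such $a_i$. Thus the interaction cost depends on $S_2$ essentially through how many path-elements are forced into $S_1$ and which have smallest $a$-values.

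The central step is to encode this into a path problem with an auxiliary cardinality-type counter. I would build a layered/expanded graph in which each vertex of $G$ is replicated across states indexing how many edges (elements) have been used so far, so that the number of path-elements $r=\lvert S_2\rvert$ is tracked, exactly as in the constrained shortest path problem with uniform edge weights reviewed by Dumitrescu and Boland~\cite{dumitrescu2001algorithms}. The dynamic program runs over these $O(mn)$ states in polynomial time. The subtlety is that the forced cost $\sum(\text{smallest } k-(m-r) \text{ values of } a_i\text{ over } i\in S_2)$ is not additively separable along the path, since it picks a \emph{global} bottom set of $a$-values along the chosen path. To handle this I would sort $[m]$ by $a$-value and incorporate a threshold: guess (or sweep over) the number $\ell=\max\{0,\,k-(m-r)\}$ of forced elements and the cutoff, so that the DP charges $a_i$ only for the cheapest path-edges while keeping the rest free, adding the independent costs $\sum_{j\in S_2}d_j$ additively along the path.

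The main obstacle is precisely this non-separability of the interaction term: the greedy ``keep the cheapest forced $a_i$'' rule couples all edges of the path, so a naive edge-additive shortest path formulation does not capture the objective. The resolution I expect to work is to enrich the state space to also track the count of path-elements whose $a$-value lies below a swept threshold $\tau$, turning the global bottom-$\ell$ selection into a series of monotone threshold decisions that \emph{are} additive; for each candidate $\tau$ one solves a uniform-weight constrained shortest path in polynomial time, and since only $O(m)$ distinct thresholds (the values $a_i$) matter, the total running time remains polynomial. The nonnegativity hypotheses $a\geq 0,\ d\geq 0$ are what make the greedy avoidance optimal and the shortest-path subproblems well-posed (no negative cycles), so both assumptions are used essentially.
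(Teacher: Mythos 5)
Your reduction of the inner problem (for a fixed path, avoid its elements; if forced, take the cheapest ones) is correct, and the tool you reach for --- a uniform-weight resource-constrained shortest path DP in the style of Dumitrescu and Boland --- is exactly the one the paper invokes for this theorem. However, the threshold-sweep machinery you develop to handle the ``non-separability'' of the bottom-$\ell$ sum is a complication you created yourself by insisting on solving the inner problem greedily \emph{before} setting up the DP. The joint problem is perfectly separable if you let the DP make the membership decision for $S_1$ edge by edge: when the path traverses an edge $e$, either $e$ is put into $S_1$ (``paid'', contributing $d_e + a_e$) or it is not (``unpaid'', contributing $d_e$). A partial assignment extends to a feasible $k$-set $S_1$ if and only if the number of unpaid path edges is at most $m-k$ (so that $S_1$ can be completed outside the path) and the number of paid edges is at most $k$. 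Hence the whole problem is a single shortest $s$-$t$-path computation with an additive edge cost and uniform $0/1$ resource constraints; a DP over states (vertex, number of unpaid edges, number of paid edges) solves it in polynomial time, and $a \geq 0$, $d \geq 0$ guarantees that removing cycles never increases cost or resource consumption, so the walk-DP returns a simple path. The bottom-$\ell$ selection you fought with is recovered automatically: the DP pays only when the budget of $m-k$ unpaid edges is exhausted, and then pays in the cheapest way.

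As for your threshold sweep itself: it can be patched into a correct algorithm, but as written it has two genuine holes. First, ties. If several elements share an $a$-value, no threshold $\tau \in \{a_i\}$ isolates exactly the $\ell$ cheapest path edges (consider a path whose edges all have $a_i = 1$ and $\ell = 1$); note that $a \equiv 1$ --- the pure disjointness objective --- is the prototypical instance of this theorem, so this is not a corner case. You would need a lexicographic perturbation, which you never introduce, and with it the claim that ``only $O(m)$ distinct thresholds matter'' has to be re-argued. Second, soundness of wrong guesses. For the sweep to be correct you must ensure that for every guessed pair $(\ell, \tau)$, every path accepted by that DP corresponds to a \emph{feasible} pair $(S_1, S_2)$ whose true cost equals its DP cost (for instance by charging all below-threshold edges and enforcing that the number of above-threshold path edges is at most $m-k$); otherwise a wrong guess can report a value strictly below the optimum, and the minimum over the sweep need not equal the optimum --- the usual min-max versus max-min trap. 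Your proposal never addresses this, saying only that you ``expect'' the resolution to work.
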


\subsection{Matroid bases as feasible sets}
\label{sec:diagonal-mat}

Another problem of great interest is the case when $\f_{1}, \f_{2}$ are sets of
spanning trees of a graph,
especially if the underlying graphs are isomorphic.  A generalization of this
problem is the case when $\f_{i} = \base(\M_{i})$ are the sets of bases of (not necessarily isomorphic)
matroids $\M_{1}, \M_{2}$. In this section we assume familiarity with matroids and refer the
reader to Oxley~\cite{oxley2006matroid} for further definitions, results and
notations.

We will first focus on the case without linear costs, i.e.\@ $c \equiv d \equiv 0$. So
the problem we are interested in is, given a ground set $E = [n]$ and a cost
vector $a \in \r^{n}$, to minimize the objective function
\[ f(B_{1}, B_{2}) = \sum_{i \in B_{1} \cap B_{2}} a_{i} \]
under the restrictions that $B_{1} \in \base(\M_{1}), B_{2} \in \base(\M_{2})$
for two given matroids $\M_{1}, \M_{2}$ over the ground set $E$.

\subsubsection{Minimum cardinality base intersection}

If the cost vector $a \equiv 1$ (together with $c\equiv d\equiv 0$) this gives the problem of minimizing the size of the intersection of the two matroid bases $B_{1}$ and $B_{2}$. It contains as a special case the disjoint matroid base problem for two given matroids, since there exist two disjoint bases if and only if the optimal solution has objective value $0$.

Gabow and Westermann~\cite{gabow1992forests} showed that the disjoint matroid base problem can be
efficiently solved under the assumption that there exist efficient oracles to
solve the static-base circuit problem. This means that for both matroids
$\M_{i}$, $i=1,2$,
independent set $S$ and element $e \notin S$, we can efficiently decide if $S
\cup \{e\}$ is independent in $\M_{i}$, and if not, output all elements in $C(e, S)$, the
unique cycle contained in $S \cup \{e\}$ of the matroid $\M_{i}$.

\subsubsection{Minimum cost base intersection}

The more general case, where for each element $i \in B_{1} \cap B_{2}$ we pay a
non-negative cost $a_{i} \geq 0$ was already studied in the algorithmic game
theory literature. It is equivalent to computing the socially optimal state of a
two player matroid congestion game. Ackermann et al.~\cite{ackermann2008impact} show that
this problem can be solved in polynomial time for an arbitrary number of players using the same approach that was used by Werneck et al.~\cite{werneck2000finding} to calculate the
socially optimal state in spanning tree congestion games.

To keep this work self contained we give a summary of their algorithm using the
notation of diagonal COPIC. We reduce the problem to an equivalent instance of
the minimum cost disjoint base problem, for which we can guarantee the existence
of two disjoint bases.

The idea of the construction is to double all elements of $E$.
The new ground set of elements is denoted by $E' = E_{1} \cup E_{2}$, where $E_{1}, E_{2}$
are two disjoint copies of the original ground set $E$. For $i \in E$ we write
$i_{1}$ for the copy of $i$ inside $E_{1}$ and $i_{2}$ for its copy in $E_{2}$.
We set $a_{i_{1}}= a_{i}$ and $a_{i_{2}}:= 0$ for each $i \in E$ 
and introduce two new matroids $\M'_{1}, \M'_{2}$, each with $E'$ as their ground set. 
The independent sets of
$\M'_{j}$ are all sets $S' \subseteq E'$ that do not contain both $i_{1}$ and
$i_{2}$ for any $i \in E$ and where $\{ i \in E \colon i_{1} \in S'\text{ or }
i_{2} \in S' \}$ is independent in $\M_{j}$, for $j=1,2$.

Given two disjoint bases $B'_{1}$ of $\M'_{1}$ and $B'_{2}$ of
$\M'_{2}$, they induce, not necessarily disjoint, bases $B_{1}, B_{2}$ of $\M_{1},\M_{2}$. For every element
$i \in B_{1} \cap B_{2}$ we know that both $i_{1}$ and $i_{2}$ were used in
$B'_{1}$ and $B'_{2}$. So for this element the cost $a_{i}$ is payed in the
disjoint base problem. For all other elements $i_{2}$ is used, since $0 =
a_{i_{2}} \leq a_{i_{1}}$.

Efficient methods for solving the minimum cost disjoint base
problem for  general matroids obtained by Gabow and
Westermann~\cite{gabow1992forests} can be used to solve our transformed minimum cost base intersection instance.
\medskip

The case of arbitrary real costs  $a_{e} \in \r$ can also be handled.
This is not included in the algorithmic game theory literature, since in that
context a positive impact of congestion (i.e.\@ $a_i<0$) does not make sense.

First, we find a set $B \in \inds(\M_{1}) \cap \inds(\M_{2})$ of minimum cost and
we contract this set. For all edges $e \in E \setminus B$ with $a_{e} < 0$ it
holds that $B+e \notin \inds(\M_{1}) \cap \inds(\M_{2})$, or we could improve
the solution, so these elements can never be in the intersection of a feasible
solution together with $B$. Hence we can run the algorithm from above on the 
remaining instance. The optimality of this approach follows from
the following lemma.

\begin{lemma}
    Let $B$ be an element of $\inds(\M_{1}) \cap \inds(\M_{2})$ with minimum cost $a(B):=\sum_{i\in B}a_i$, and $B_{1}, B_{2}$ be two bases. Then $B_{1}, B_{2}$ can be transformed into two new
    bases $\tilde{B}_{1}, \tilde{B}_{2}$ such that $B \subseteq \tilde{B}_{1}
    \cap \tilde{B}_{2}$ and $a(\tilde{B}_{1} \cap \tilde{B}_{2}) \leq a(B_{1} \cap B_{2})$.
\end{lemma}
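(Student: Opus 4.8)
The plan is to prove the statement by an exchange argument, showing that any pair of bases $B_1, B_2$ can be modified to contain a fixed minimum-cost common independent set $B$ without increasing the cost of the intersection.

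\medskip

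First, I would recall the relevant structure. We have $B \in \inds(\M_1) \cap \inds(\M_2)$ of minimum cost, and arbitrary bases $B_1 \in \base(\M_1)$, $B_2 \in \base(\M_2)$. Since $B$ is independent in $\M_1$, it can be extended to a basis of $\M_1$; the same holds for $\M_2$. The goal is to build $\tilde B_1, \tilde B_2$ with $B \subseteq \tilde B_1 \cap \tilde B_2$ and $a(\tilde B_1 \cap \tilde B_2) \le a(B_1 \cap B_2)$. I would process the elements of $B$ one at a time. For each $e \in B \setminus (B_1 \cap B_2)$, I want to force $e$ into both bases. The key step is the following: for a matroid $\M_i$ and basis $B_i$, if $e \notin B_i$ then $B_i + e$ contains a unique circuit $C(e, B_i)$, and for any $f \in C(e, B_i) - e$ the set $B_i + e - f$ is again a basis of $\M_i$. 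I would use this to swap $e$ in and swap out some element $f$ already in $B_i$.

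\medskip

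The heart of the argument — and the step I expect to be the main obstacle — is controlling what happens to the intersection cost under these swaps, and doing so consistently in both matroids at once. When I add $e$ to $B_1$ and remove some $f_1 \in C(e, B_1)$, and add $e$ to $B_2$ removing some $f_2 \in C(e, B_2)$, the intersection $B_1 \cap B_2$ changes: $e$ may enter it, while $f_1, f_2$ may leave it. I would argue that the minimality of $a(B)$ lets me choose the removed elements so that the net change in intersection cost is nonpositive. Specifically, the minimality of $B$ among common independent sets should be leveraged via a matroid intersection / exchange property: if $e \in B$ and $e$ is about to be forced in, then there must exist a suitable $f$ on the relevant circuit(s) with $a_f \ge a_e$, or else $B$ itself could be improved by an exchange, contradicting its minimality as a minimum-cost common independent set. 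The delicate part is simultaneity — the exchanges in $\M_1$ and $\M_2$ interact through the shared ground set, so I would likely phrase this using the common basis / matroid intersection augmenting-path machinery rather than handling the two matroids independently.

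\medskip

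Concretely, the cleanest route is induction on $|B \setminus (B_1 \cap B_2)|$. In the inductive step I pick an uncovered element $e \in B$, perform the coordinated swap described above to obtain new bases $B_1', B_2'$ with $e \in B_1' \cap B_2'$, and verify $a(B_1' \cap B_2') \le a(B_1 \cap B_2)$ together with $|B \setminus (B_1' \cap B_2')| < |B \setminus (B_1 \cap B_2)|$; iterating drives the discrepancy to zero and yields $\tilde B_1, \tilde B_2$. The cost inequality in each step is exactly where minimality of $a(B)$ enters: any element $f$ displaced from the intersection that lies outside $B$ is, by the optimality of $B$ as a minimum-cost element of $\inds(\M_1) \cap \inds(\M_2)$ (after the earlier contraction guaranteeing $a_f \ge 0$ on the remaining instance), no cheaper to keep than $e$ is, so swapping $e$ in for $f$ does not raise the intersection cost. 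Verifying that a consistent choice of displaced elements always exists — i.e.\ that the two circuits $C(e, B_1)$ and $C(e, B_2)$ offer compatible removals keeping both $B_1', B_2'$ valid bases while not disturbing previously fixed elements of $B$ — is the technical crux I would need to discharge carefully.
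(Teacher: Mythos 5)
Your high-level plan (force the elements of $B$ into both bases one at a time by circuit exchanges, never disturbing the already-placed elements of $B$) is the same as the paper's, and your case analysis on whether $e$ lies in $B_1$, $B_2$, or neither matches the paper's. The genuine gap is the cost accounting on which your induction rests. You claim that minimality of $a(B)$ yields, on the relevant circuit, a displaced element $f \notin B$ with $a_f \ge a_e$ (and, in your parenthetical, even $a_f \ge 0$), so that every single swap satisfies $a(B_1' \cap B_2') \le a(B_1 \cap B_2)$. Both claims are false. Take $E = \{e,f,g\}$, let $\M_1$ be the partition matroid with blocks $\{e,f\}, \{g\}$ (capacity $1$ each) and $\M_2$ the partition matroid with blocks $\{f,g\}, \{e\}$ (capacity $1$ each), with costs $a_e = a_g = -3$ and $a_f = -5$. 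The common independent sets are $\emptyset$, the singletons, and $\{e,g\}$, so $B = \{e,g\}$ is the unique minimizer, with $a(B) = -6$. Start from the bases $B_1 = \{f,g\}$ and $B_2 = \{e,f\}$, so $a(B_1 \cap B_2) = a_f = -5$. Whichever element of $B$ you process first, the exchange is forced to remove $f$ (for instance $C_1(e,B_1) = \{e,f\}$, giving $B_1' = \{e,g\}$ and $B_1' \cap B_2 = \{e\}$), and the intersection cost strictly increases from $-5$ to $-3$, the unique displaced element satisfying $a_f = -5 < a_e = -3 < 0$. So the per-step invariant your induction maintains cannot be maintained, and the appeal to ``the earlier contraction guaranteeing $a_f \ge 0$'' is circular: this lemma is precisely what licenses that contraction, and negative-cost elements outside $B$ can perfectly well lie in the intersection of bases not containing $B$, as $f$ does here.

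The missing idea --- and the paper's actual argument, terse as it is --- is that the cost comparison must be made globally, not per step. The exchanges in $\M_1$ and $\M_2$ need no coordination at all (your ``simultaneity'' crux is a non-issue): each basis is repaired separately, and the only requirement is that removed elements avoid $B$, which is always possible because $B$ is independent, so the circuit $C_i(e,B_i)$ cannot lie inside $B$. After all iterations, $\tilde B_1 \cap \tilde B_2 = B \cup \bigl((B_1 \cap B_2) \setminus R\bigr)$, where $R$, the set of all removed elements, is disjoint from $B$. Minimality of $B$ is then used twice, globally: first, every element of $B$ has $a_e \le 0$ (otherwise deleting it gives a cheaper common independent set), so nothing of positive cost is ever added to the intersection; second, $(B \cap B_1 \cap B_2) \cup (R \cap B_1 \cap B_2)$ is a common independent set (being a subset of $B_1 \cap B_2$), so $a(B) \le a(B \cap B_1 \cap B_2) + a(R \cap B_1 \cap B_2)$, i.e.\ $a\bigl(B \setminus (B_1 \cap B_2)\bigr) \le a(R \cap B_1 \cap B_2)$, which rearranges exactly to $a(\tilde B_1 \cap \tilde B_2) \le a(B_1 \cap B_2)$. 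In the example above this is visible as the intersection cost evolving $-5 \to -3 \to -6$: an intermediate increase that only a final, global comparison against $a(B)$ can absorb.
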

\begin{proof}
    Let $e \in B \setminus (B_{1} \cap B_{2})$. There are three different cases
    on how to add $e$ to the intersection.
    \begin{enumerate}
        \item $e \notin B_{1} \cup B_{2}$: In this case we have $f_{i} \in
            C_{i}(e, B_{i}) \setminus B$ for both $i=1,2$. By modifying the
            bases to $\tilde{B}_{i} = B_{i} + e - f_{i}$ we get that
            \[ a(\tilde{B}_{1} \cap \tilde{B}_{2}) = a(B_{1} \cap B_{2}) + a_{e}
            - \begin{cases}a_{f} & f_{1} = f_{2}\\ 0 & f_{1} \neq f_{2}\end{cases} \]
        \item $e \in B_{1}, e \notin B_{2}$: In this case we have $f \in
            C_{2}(e, B_{2}) \setminus B$ and we can modify $\tilde{B}_{2} = B_{2} + e - f$.
            this gives a modified cost of 
            \[ a(\tilde{B}_{1} \cap \tilde{B}_{2}) = a(B_{1} \cap B_{2}) + a_{e}
            - \begin{cases}a_{f} & f \in B_{1}\\ 0 & f \notin B_{1}\end{cases} \]
        \item $e \notin B_{1}, e \in B_{2}$: symmetric to case (2).
    \end{enumerate}

    We apply these steps iteratively until $B$ is contained in the intersection.
    We know that the sum of costs of the elements $e \in \tilde{B}_{1} \cap
    \tilde{B}_{2}$ with $a_{e} \leq 0$ must now be smaller than before, since $B$
    is minimum. We never added any element $e$ to $\tilde{B}_{1} \cap
    \tilde{B}_{2}$ with $a_{e} > 0$. This implies that $a(\tilde{B}_{1} \cap
    \tilde{B}_{2}) \leq a(B_{1} \cap B_{2})$.
\end{proof}

The approach above gives us the following result.

\begin{theorem}
    $\copic(\base(\M_{1}), \base(\M_{2}), \diag(a), 0, 0)$ can be solved in
    polynomial time, for any two matroids $\M_{1}, \M_{2}$ and cost vector $a
    \in \r^{n}$.
\end{theorem}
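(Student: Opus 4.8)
The plan is to reduce the general signed-cost problem to the already-solved non-negative case, where $\copic(\base(\M_1), \base(\M_2), \diag(a), 0, 0)$ with $a \geq 0$ is handled by the doubling construction together with the minimum cost disjoint base algorithm of Gabow and Westermann~\cite{gabow1992forests}. The reduction rests on two ideas: forcing into the common intersection every element that it is profitable to place there, and then verifying that the leftover instance is genuinely non-negative.

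First I would compute, by weighted matroid intersection, a common independent set $B \in \inds(\M_1) \cap \inds(\M_2)$ of minimum cost $a(B)$, which is possible in polynomial time. Because independent sets are closed under taking subsets, $B$ contains no positive-cost element, and by minimality no element $e \notin B$ with $a_e < 0$ can satisfy $B + e \in \inds(\M_1) \cap \inds(\M_2)$ --- otherwise $B+e$ would be a strictly cheaper common independent set. Equivalently, every such $e$ is a loop in $\M_1/B$ or in $\M_2/B$.

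Next I would contract $B$ in both matroids and solve the residual instance $\copic(\base(\M_1/B), \base(\M_2/B), \diag(a'), 0, 0)$ with $a'_e = \max\{a_e, 0\}$, using the non-negative algorithm. Replacing $a$ by $a'$ is harmless: every negative-cost $e$ is a loop in at least one contracted matroid, so it cannot lie in both of the chosen bases and hence never contributes to the intersection. Since $B$ is independent in each $\M_i$, the bases of $\M_i/B$ are exactly the sets $B_i'$ for which $B \cup B_i'$ is a base of $\M_i$, and a residual solution $(B_1', B_2')$ lifts to $\tilde B_1 = B \cup B_1'$, $\tilde B_2 = B \cup B_2'$ with $a(\tilde B_1 \cap \tilde B_2) = a(B) + a'(B_1' \cap B_2')$.

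Finally, optimality follows from the preceding lemma: any optimal pair of bases can be transformed, without increasing its intersection cost, into a pair whose intersection contains $B$. Thus some optimal solution contains $B$, and conditioned on this the problem splits exactly into the residual contracted instance, solved to optimality by the non-negative algorithm; adding $a(B)$ to the residual optimum yields the true optimum, and every step is polynomial. The main obstacle is precisely this reduction: one must argue at once that forcing $B$ into the intersection costs no optimality (the lemma) and that contraction strips away every negative-cost element's ability to re-enter the intersection (minimality of $B$). Once both facts are secured, the non-negative case applies as a black box.
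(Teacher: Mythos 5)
Your proof is correct and follows essentially the same route as the paper: compute a minimum-cost common independent set $B$ by weighted matroid intersection, contract it, observe that minimality of $B$ forces every negative-cost element outside $B$ to be a loop in one of the contracted matroids (hence never in an intersection containing $B$), solve the residual instance with the non-negative-cost algorithm, and invoke the transformation lemma for optimality. Your explicit cost replacement $a'_e = \max\{a_e, 0\}$ merely makes precise the step the paper leaves implicit when it says to run the algorithm from above on the remaining instance.
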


\subsubsection{The case $a \geq 0, c\equiv d$}

This case can be solved analogously to the case without linear costs. We create
two identical helper matroids $\M'_{1}, \M'_{2}$, with the only difference that
we set the costs of the elements to $a_{e} + c_{e}$ and $c_{e}$. Since $a_{e} \geq 0$, it follows that
the algorithm will prefer the copy of cost $c_{e}$ if it takes only one of the
two elements into the solution. This again implies that we obtain a one to one
correspondence of solutions as in the discussion above.

\begin{theorem}
    $\copic(\base(\M_{1}), \base(\M_{2}), \diag(a), c, c)$ can be solved in
    polynomial time, for any two matroids $\M_{1}, \M_{2}$ and cost vectors $a
    \in \r^{n}_{\geq 0}, c \in \r^{n}$.
\end{theorem}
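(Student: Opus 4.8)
The plan is to adapt the element-doubling reduction to the minimum cost disjoint base problem that was developed for the minimum cost base intersection case, adjusting only the weights to absorb the linear term. First I would double the ground set, setting $E' = E_{1} \udot E_{2}$ with copies $i_{1} \in E_{1}$ and $i_{2} \in E_{2}$ of each $i \in E$, and define the helper matroids $\M'_{1}, \M'_{2}$ exactly as in the base intersection subsection: a set $S' \subseteq E'$ is independent in $\M'_{j}$ iff it contains at most one copy of each element and its projection $\{ i \in E \colon i_{1} \in S' \text{ or } i_{2} \in S' \}$ is independent in $\M_{j}$. The only change is in the weights, where I would set $a_{i_{1}} = a_{i} + c_{i}$ for the ``expensive'' copy and $a_{i_{2}} = c_{i}$ for the ``cheap'' copy. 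As before, two disjoint bases always exist, since for any $B_{1} \in \base(\M_{1})$ and $B_{2} \in \base(\M_{2})$ the sets $\{ i_{1} \colon i \in B_{1} \}$ and $\{ i_{2} \colon i \in B_{2} \}$ lie in different copies of $E$ and are therefore automatically disjoint while projecting onto bases of $\M_{1}, \M_{2}$.

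Next I would establish that the minimum cost of a pair of disjoint bases of $\M'_{1}, \M'_{2}$ equals the optimum of the COPIC instance, via two inequalities. For one direction, given any feasible pair $(B_{1}, B_{2})$ I would construct disjoint bases of cost exactly $f(B_{1}, B_{2})$: for $i \in B_{1} \cap B_{2}$ place $i_{1}$ in $B'_{1}$ and $i_{2}$ in $B'_{2}$ (contributing $(a_{i}+c_{i}) + c_{i} = a_{i} + 2c_{i}$), for $i$ lying in exactly one of the two bases place the cheap copy $i_{2}$ in the corresponding $B'_{j}$ (contributing $c_{i}$), and leave the remaining elements out; one checks these project onto $B_{1}, B_{2}$, are bases of $\M'_{1}, \M'_{2}$, and are disjoint. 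For the reverse direction, given disjoint bases $(B'_{1}, B'_{2})$ with induced bases $(B_{1}, B_{2})$, I would compare contributions element by element: an element of $B_{1} \cap B_{2}$ forces both copies to be used, contributing exactly $a_{i} + 2c_{i}$, while an element in exactly one base uses a single copy of weight either $c_{i}$ or $a_{i} + c_{i} \geq c_{i}$. This yields $f(B_{1}, B_{2}) \leq \operatorname{cost}(B'_{1}, B'_{2})$, so that minimizing over disjoint bases minimizes $f$, and the induced pair of an optimal disjoint base solution is optimal for COPIC.

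Finally I would invoke the algorithm of Gabow and Westermann~\cite{gabow1992forests} for the minimum cost disjoint base problem on $\M'_{1}, \M'_{2}$, which runs in polynomial time and handles the arbitrary, possibly negative, weights $c_{i}$ arising from the linear costs. The main obstacle is exactly the reverse inequality above: one must argue that an optimal disjoint base pair never pays for an expensive copy $i_{1}$ when only a single copy of $i$ is needed. This is where the hypothesis $a \geq 0$ is essential, since it guarantees $a_{i_{1}} = a_{i} + c_{i} \geq c_{i} = a_{i_{2}}$, so that whenever the partner base omits $i$ we may swap $i_{1}$ for $i_{2}$ without destroying independence or disjointness and without increasing the cost. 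Hence the optimum uses the cheap copy for every singleton element, the per-element cost matches the COPIC objective exactly, and the equivalence of the two optima holds.
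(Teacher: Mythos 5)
Your proposal is correct and follows essentially the same route as the paper: the paper likewise doubles the ground set with copies of costs $a_e + c_e$ and $c_e$, invokes $a \geq 0$ to argue the cheap copy is preferred when only one copy is needed, and then applies the Gabow--Westermann minimum cost disjoint base algorithm. Your write-up merely makes explicit the two-sided cost comparison that the paper summarizes as a ``one to one correspondence of solutions.''
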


It remains an interesting open question whether we can also solve the case
with arbitrary costs $a \in \r^{n}$ and the case with non-equal linear costs $c
\neq d$ in polynomial time, as it is possible for uniform and partition
matroids.

\subsection{Pairs of paths}
\label{sec:diagonal-paths}

In this section we analyze the special case when $\f_{1}$ and $\f_{2}$
correspond to the set of $s_{1}$-$t_{1}$- and $s_{2}$-$t_{2}$-paths in a graph. We
will again look at the case where the graphs corresponding to $\f_{1}$ and
$\f_{2}$ are identical. One must also make sure that there do not exist negative
circles in the graph, else already optimizing over a linear cost function
without interaction costs is NP-hard. To
simplify the exposition we will focus on $Q,c,d \geq 0$. Table \ref{tab:paths} is a summary of the results in this subsection.
It is important to differentiate between directed and undirected graphs,
which is clear in the light of Proposition~\ref{paths-eq-copic} and the known complexity results of the edge-disjoint
paths problem.

\begin{prop}\label{paths-eq-copic}
    Given a graph $G$, $\copic(\paths_{s_{1}, t_{1}}(G), \paths_{s_{2}, t_{2}}(G), \diag(a), 0, 0)$
    with $a > 0$ has a solution 
    with objective value $0$, if and only if there exist two edge-disjoint paths
    $s_{i}$-$t_{i}$-paths in $G$.
\end{prop}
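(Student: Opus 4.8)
The plan is to observe that this proposition is essentially a translation of the diagonal objective into the language of edge-disjointness, with the hypothesis $a > 0$ doing all the work. With $c \equiv d \equiv 0$ the diagonal COPIC objective collapses to
\[ f(S_1, S_2) = \sum_{i \in S_1 \cap S_2} a_i, \]
where $S_1 \in \paths_{s_1,t_1}(G)$ and $S_2 \in \paths_{s_2,t_2}(G)$ are regarded as their edge sets and $S_1 \cap S_2$ is the set of edges shared by the two paths. The single key observation is that, because every $a_i$ is strictly positive, this sum is a nonnegative quantity that vanishes precisely when $S_1 \cap S_2 = \emptyset$.

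For the reverse implication I would take two edge-disjoint paths $P_1$ (an $s_1$-$t_1$-path) and $P_2$ (an $s_2$-$t_2$-path), set $S_1 = P_1$ and $S_2 = P_2$, and note that edge-disjointness means $S_1 \cap S_2 = \emptyset$, so $f(S_1, S_2) = 0$; this exhibits a feasible COPIC solution of objective value $0$. For the forward implication I would start from a solution $(S_1, S_2)$ attaining objective value $0$. Since $a > 0$ forces every term of $\sum_{i \in S_1 \cap S_2} a_i$ to be positive, the only way the sum can be zero is $S_1 \cap S_2 = \emptyset$, and then $S_1$ and $S_2$ are exactly a pair of edge-disjoint $s_1$-$t_1$- and $s_2$-$t_2$-paths.

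There is no real obstacle here: the statement is a near-definitional equivalence, and the argument is one line in each direction once the objective has been written out explicitly. The only point worth a moment's care is the degenerate case where one of the path families $\paths_{s_i,t_i}(G)$ is empty; then COPIC admits no feasible solution (so the left-hand side fails) and there are no two edge-disjoint paths either (so the right-hand side fails), so the biconditional holds vacuously. The entire content of the proposition lies in the positivity of $a$, which is precisely what converts ``zero interaction cost'' into ``no shared edges.''
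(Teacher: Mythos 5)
Your proof is correct and matches the paper's reasoning exactly: the paper states this proposition without proof precisely because, as you observe, with $c \equiv d \equiv 0$ the objective reduces to $\sum_{i \in S_1 \cap S_2} a_i$, and positivity of $a$ makes ``objective value $0$'' synonymous with ``edge-disjoint paths.'' Your handling of the degenerate case where a path family is empty is a small extra care the paper does not bother with, but nothing more needs to be said.
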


\begin{table}[h]
\begin{tabular}{l  l  l  l}
    \textbf{directedness} & \textbf{terminals} & \textbf{cost restrictions} &
    \textbf{complexity} \\ \hline
    directed & arbitrary & $Q=I, c=d=0$ & NP-hard \\
    directed & common & $Q=\diag(\infty)$ & NP-hard \\
    undirected & arbitrary & $Q=\diag(\infty), d=0$ & NP-hard \\
    undirected & arbitrary & $c=d=0$ & open \\
    undirected & common & $Q=\diag(\infty)$ & NP-hard \\
    both & common & $c=d$ & P\\
\end{tabular}
	\bigskip
\caption{Summary of the results for diagonal COPIC with paths as feasible solutions}
\label{tab:paths}
\end{table}

It is well known that the edge-disjoint paths problem is polynomial time solvable
for every constant number of paths in undirected
graphs~\cite{robertson1995graph}, but NP-hard already
for 2 paths in directed graphs~\cite{fortune1980directed}. This imediatly yields
the following result.

\begin{corollary}
    Given a directed graph $G$, $\copic(\paths_{s_{1}, t_{1}}(G), \paths_{s_{2},  t_{2}}(G), \diag(a), 0, 0)$ is NP-hard, even for $a \equiv 1$.
\end{corollary}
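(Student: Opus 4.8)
The plan is to obtain this hardness result immediately as a reduction from the directed two-edge-disjoint-paths problem, which is NP-hard by Fortune, Hopcroft and Wyllie~\cite{fortune1980directed}, using Proposition~\ref{paths-eq-copic} as the bridge. An instance of the directed two-edge-disjoint-paths problem consists of a directed graph $G$ together with terminals $s_1, t_1, s_2, t_2$, and asks whether there exist an $s_1$-$t_1$-path and an $s_2$-$t_2$-path in $G$ sharing no edge. I would map such an instance directly to the COPIC instance $\copic(\paths_{s_1, t_1}(G), \paths_{s_2, t_2}(G), \diag(a), 0, 0)$ with $a \equiv 1$, which requires no modification of the graph at all.

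The key observation is that, since $a \equiv 1$ and $c = d = 0$, the objective collapses to $f(S_1, S_2) = \sum_{i \in S_1 \cap S_2} 1 = |S_1 \cap S_2|$, i.e.\ it counts precisely the number of edges shared by the two chosen paths. Hence the optimal value of this COPIC instance is zero exactly when some pair of paths shares no edge. Because $a \equiv 1 > 0$, Proposition~\ref{paths-eq-copic} applies verbatim and certifies that the COPIC optimum is zero if and only if two edge-disjoint $s_i$-$t_i$-paths exist in $G$. I would then note that any polynomial-time algorithm for this family of COPIC instances would, simply by reading off whether the returned optimum equals $0$, decide the directed two-edge-disjoint-paths problem in polynomial time, contradicting its NP-hardness. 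This establishes NP-hardness of the COPIC family, and since the construction already uses $a \equiv 1$, the ``even for $a \equiv 1$'' clause comes for free.

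There is essentially no genuine obstacle here: both ingredients — the NP-hardness of directed two-edge-disjoint paths and the equivalence of Proposition~\ref{paths-eq-copic} — are already in hand, so the corollary is merely their composition, which is exactly why the paper can assert that it follows immediately. The only points I would make explicit are that Proposition~\ref{paths-eq-copic} is valid for directed graphs (its statement and proof do not invoke undirectedness, and $\paths_{s,t}(G)$ is defined for digraphs as the set of directed $s$-$t$-paths), and that the reduction is parsimonious enough that the COPIC decision question ``is the optimum equal to $0$?'' is literally the edge-disjoint paths decision question. No cost scaling, auxiliary gadget, or graph transformation is needed.
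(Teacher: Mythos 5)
Your proposal is correct and follows exactly the route the paper intends: the paper derives this corollary immediately from Proposition~\ref{paths-eq-copic} combined with the NP-hardness of the two edge-disjoint paths problem in directed graphs~\cite{fortune1980directed}, which is precisely your composition of the two ingredients. Your additional care in noting that the objective with $a \equiv 1$, $c = d = 0$ counts shared edges and that the proposition applies to digraphs is a faithful elaboration of what the paper leaves implicit.
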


We use the following results obtained by
Eilam-Tzoreff~\cite{eilam1998disjoint} to further classify the complexity of our
problem.
\begin{theorem}[Eilam-Tzoreff~\cite{eilam1998disjoint}]\label{eilam-poly}
    The undirected edge-disjoint two shortest paths problem is polynomial time
    solvable, even in the weighted case.
    On the other hand, the undirected two edge-disjoint one shortest paths problem is NP-hard.
\end{theorem}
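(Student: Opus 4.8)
The plan is to treat the two assertions separately, since they are genuinely different problems: the first asks for two edge-disjoint paths each of which is \emph{individually} shortest for its own terminal pair, whereas the second asks for two edge-disjoint paths only one of which is required to be shortest (the other being an arbitrary path).

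For the polynomial part I would first reduce to shortest-path DAGs. Because the weights are nonnegative, compute with Dijkstra the distances $d_i(v) = \text{dist}(s_i, v)$ and $\bar d_i(v) = \text{dist}(v, t_i)$ for $i=1,2$, and put $L_i = d_i(t_i)$. An undirected edge $e = \{u,v\}$ of weight $w_e$ can lie on a shortest $s_i$-$t_i$ path exactly when $d_i(u) + w_e + \bar d_i(v) = L_i$ (or the symmetric condition with $u,v$ swapped); keeping only such edges, oriented in the direction of increasing $d_i$, yields a DAG $D_i$ whose directed $s_i$-$t_i$ paths are precisely the shortest $s_i$-$t_i$ paths. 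The task becomes: pick one directed path in $D_1$ and one in $D_2$ using no common undirected edge. I would solve this by a dynamic program that grows the two partial paths simultaneously, always advancing along whichever head currently has the smaller $d$-value, so the state is just the pair of current heads $(x,y)$, giving $O(n^2)$ states. The main obstacle, and the technical core of Eilam-Tzoreff's argument, is a structural lemma guaranteeing that an optimal edge-disjoint pair of shortest paths admits a canonical, distance-synchronized form in which the interaction (shared vertices and crossings) of $P_1$ and $P_2$ is limited enough that the local edge-disjointness checks performed during the DP suffice to ensure global edge-disjointness; the monotonicity of $d_1$ along $P_1$ and of $d_2$ along $P_2$ is what makes the advancing order consistent.

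For the NP-hardness part, the crucial observation is that without the shortest-path requirement the two edge-disjoint paths problem is polynomially solvable for a fixed number of pairs~\cite{robertson1995graph}, so the hardness must come entirely from forcing $P_1$ to be shortest. I would therefore reduce from 3-SAT, building a graph in which the shortest $s_1$-$t_1$ paths are in bijection with truth assignments: for each variable a small diamond gadget offers two equal-length branches (encoding true and false), with weights calibrated so that every $s_1$-$t_1$ path traversing all variable gadgets has the same total length $L$ and is shortest exactly when it makes one branch choice per variable. The second pair $(s_2,t_2)$ is then routed through clause gadgets so that an $s_2$-$t_2$ path edge-disjoint from $P_1$ exists if and only if, in every clause, at least one literal edge was left unused by $P_1$, i.e.\@ if and only if the assignment selected by $P_1$ satisfies the formula. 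Edge-disjointness is the coupling device: the edges $P_1$ uses inside a variable gadget block the corresponding literal edges otherwise available to $P_2$. Hence an edge-disjoint pair with $P_1$ shortest exists iff the 3-SAT instance is satisfiable.

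The step I expect to be hardest is the structural lemma in the polynomial case, namely arguing that two edge-disjoint shortest paths can be represented in a distance-synchronized canonical form so that a polynomial-size DP state retains all information needed to avoid reusing an edge. On the hardness side, the delicate point is calibrating the gadget weights so that the assignment-encoding paths are genuinely the unique shortest $s_1$-$t_1$ paths while simultaneously leaving exactly the right residual edges for $P_2$; once the gadgets are verified, the equivalence with satisfiability is routine.
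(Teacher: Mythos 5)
First, a point of calibration: the paper does not prove this statement at all --- it is imported verbatim from Eilam-Tzoreff~\cite{eilam1998disjoint} as a cited tool (used only to derive the subsequent corollaries about diagonal COPIC with path systems), so the benchmark here is whether your blind attempt actually constitutes a proof of the cited result. It does not: it is a plan whose two load-bearing steps are explicitly deferred. In the polynomial part, your reduction to the shortest-path DAGs $D_1, D_2$ is standard and correct, but the dynamic program you then describe --- state space consisting of the pair of current heads $(x,y)$, advancing whichever head has smaller distance label --- is not sound as stated. A state $(x,y)$ carries no memory of which edges the two partial paths have consumed, so when the paths meet at shared vertices (which edge-disjoint shortest paths may do repeatedly) nothing in the state prevents the second path from later traversing an undirected edge already used by the first, possibly in the opposite orientation; ties in the distance labels also break the claimed advancing order. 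You name exactly this obstacle and assign it to a ``structural lemma'' about a distance-synchronized canonical form, but you neither state the lemma precisely nor prove it --- and that lemma, with its case analysis of how two shortest paths can cross and interleave, is essentially the entire content of Eilam-Tzoreff's correctness argument. Asserting that the hard part exists is not the same as supplying it.

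The hardness half has the same character. The reduction template --- 3-SAT, variable diamonds with two equal-length branches so that shortest $s_1$-$t_1$ paths encode assignments, clause gadgets through which $P_2$ can be routed iff every clause has an unused literal edge --- is a plausible shape and correctly isolates where the hardness must come from (you rightly note that without the shortest-path constraint the problem is polynomial by~\cite{robertson1995graph}). But the construction is never instantiated: the weight calibration making assignment paths exactly the shortest paths, the routing that forces $P_2$ through \emph{every} clause gadget rather than around them, and the verification that $P_2$ cannot exploit gadget-internal shortcuts are all deferred as ``delicate points.'' Since the equivalence with satisfiability depends entirely on these unverified gadget properties, the NP-hardness claim is likewise unestablished by the proposal. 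In short: your sketch correctly identifies the architecture of both arguments and where their difficulty lives, which is a fair reconstruction of the literature's approach, but as a proof it has genuine gaps at precisely the two places where a proof is needed.
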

It is important to note that in the results of Eilam-Tzoreff, a shortest path
always means a shortest path in the original graph, not a shortest path after removing 
the edges of the other disjoint path.
This is the reason why using Theorem~\ref{eilam-poly},
we cannot conclude that $\copic(\paths_{s_{1}, t_{1}}(G),
\paths_{s_{2}, t_{2}}(G), \diag(\infty), c, c)$  is polynomial time
solvable, since in our model we cannot enforce two shortest paths of the original graph.
If $c=d=1$ and $Q = \diag(\infty)$ Bj\"orklund and
Husfeldt~\cite{bjorklund2014shortest} showed in 2014 how to solve the problem using a
polynomial time Monte Carlo algorithm. The existence of a deterministic
polynomial time algorithm is still unknown and a long-standing open problem.

Nevertheless, it is possible to use the hardness results of
Eilam-Tzoreff~\cite{eilam1998disjoint} to show that for general costs $c, d \geq
0$ the problem is NP-hard.

\begin{corollary}
    Given an undirected graph $G$, $\copic(\paths_{s_{1}, t_{1}}(G), \paths_{s_{2}, t_{2}}(G), \diag(\infty),
    c, 0)$ is NP-hard for $c \geq 0$.
\end{corollary}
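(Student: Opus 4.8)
The plan is to reduce the NP-hard problem from Theorem~\ref{eilam-poly}, the undirected problem of finding two edge-disjoint paths one of which is a shortest path, to the decision version of the COPIC instance in the statement. Fix such an instance: an undirected graph $G$ with non-negative edge lengths $w$ and terminals $s_1,t_1,s_2,t_2$, where we must decide whether there exist an $s_1$-$t_1$ path $P_1$ and an $s_2$-$t_2$ path $P_2$ that are edge-disjoint and such that $P_1$ is a shortest $s_1$-$t_1$ path in $G$ with respect to $w$. Here the ground set is the edge set $E(G)$, and $\f_1 = \paths_{s_1,t_1}(G)$, $\f_2 = \paths_{s_2,t_2}(G)$ live over this common ground set, matching the identical-graph assumption.

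First I would pin down the semantics of $\copic(\paths_{s_1,t_1}(G), \paths_{s_2,t_2}(G), \diag(\infty), c, 0)$. Since every diagonal entry equals $\infty$, any pair $(S_1,S_2)$ sharing an edge (i.e.\ with $S_1 \cap S_2 \neq \emptyset$) has cost $\infty$; hence the finite-cost feasible solutions are exactly the edge-disjoint pairs consisting of an $s_1$-$t_1$ path and an $s_2$-$t_2$ path. Because $d = 0$, the objective of such a pair collapses to $\sum_{i \in S_1} c_i$, the $c$-length of the first path alone. (If one wishes to avoid the symbol $\infty$, replace it by any $M > \sum_i c_i$, which makes every edge-sharing pair strictly costlier than every edge-disjoint one and leaves the optimum unchanged.)

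The reduction itself sets $c := w$ and keeps $d := 0$, which is legitimate since $w \geq 0$ yields $c \geq 0$. Let $L$ be the length of a shortest $s_1$-$t_1$ path in $G$, computable directly (e.g.\ by Dijkstra) in polynomial time. By the observation above, the optimal value of the COPIC instance equals $\min\{ w(P_1) \colon P_1, P_2 \text{ edge-disjoint} \}$, while every $s_1$-$t_1$ path satisfies $w(P_1) \geq L$. Thus the COPIC optimum is always at least $L$, and it equals $L$ precisely when some shortest $s_1$-$t_1$ path admits an edge-disjoint $s_2$-$t_2$ companion, i.e.\ exactly when the Eilam-Tzoreff instance is a yes-instance. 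Consequently, asking whether the COPIC optimum is at most $L$ decides the NP-hard problem in polynomial time given an oracle for COPIC, establishing that $\copic(\paths_{s_1,t_1}(G), \paths_{s_2,t_2}(G), \diag(\infty), c, 0)$ is NP-hard for $c \geq 0$.

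I expect the only genuinely delicate point to be matching the precise formulation of the \emph{one shortest path} problem: one must ensure that it is the path carrying the linear cost $c$ (namely $P_1$) which is required to be shortest, and that the objective $\sum_{i \in S_1} c_i$ certifies shortestness through the comparison with the unconstrained distance $L$. The degenerate cases are handled uniformly by this comparison: if no edge-disjoint pair exists, the COPIC optimum is $+\infty \neq L$, correctly signalling a no-answer, and if $s_1,t_1$ are disconnected then $L = +\infty$ and the instance is trivial.
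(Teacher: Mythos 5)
Your proposal is correct and follows essentially the same route as the paper: both reduce from Eilam-Tzoreff's NP-hard undirected two edge-disjoint one-shortest-path problem by taking $c$ to be the edge lengths, letting $\diag(\infty)$ enforce edge-disjointness, and checking whether the COPIC optimum (which equals the $c$-length of $S_1$ over edge-disjoint pairs) matches the shortest $s_1$-$t_1$ distance. Your write-up merely makes explicit the details the paper leaves implicit (the finite-$M$ substitute for $\infty$, the inequality $\mathrm{opt} \geq L$, and the degenerate cases), so there is nothing to correct.
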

\begin{proof}
    Using a polynomial time algorithm for $\copic$ we can determine, if the two
    edge-disjoint one shortest paths problem has a solution. Just run the
    algorithm and check if the objective value equals the length of a shortest
    $s_{1}$-$t_{1}$-path in the given graph.
\end{proof}

This covers the case if $s_{1} \neq s_{2}$ and $t_{1} \neq t_{2}$. From the
edge-disjoint path literature we know that the problem becomes easier, if one
assumes a common source $s$ and a common sink $t$ for all the paths. We can classify the
complexity of this case for our problem, using the following results.

\begin{theorem}
    Given a graph or digraph $G$, $\copic(\paths_{s, t}(G), \paths_{s, t}(G),
    \diag(a), c, c)$ is solvable in polynomial time, for cost vectors $a,c \geq 0$.
\end{theorem}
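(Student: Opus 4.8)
The plan is to reduce the problem to a single minimum-cost flow computation, exploiting that the hypothesis $a,c \geq 0$ makes the per-edge cost a convex function of the number of selected paths passing through that edge. First I would rewrite the objective. For two $s$-$t$-paths $P_1,P_2$, collecting the contributions of $f$ edge by edge gives
\[ f(P_1,P_2) = \sum_{e \in P_1 \triangle P_2} c_e + \sum_{e \in P_1 \cap P_2} (2c_e + a_e), \]
so an edge carrying exactly one of the two paths costs $c_e$ while an edge carrying both costs $2c_e + a_e$. Reading the number of paths through $e$ as a flow value in $\{0,1,2\}$, the marginal cost of the first unit is $c_e$ and that of the second unit is $c_e + a_e \geq c_e$; hence the edge cost is convex in the flow \emph{precisely} because $a_e \geq 0$. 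This is the point where the assumption on the sign of $a$ is used.

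Next I would build a flow network on the vertex set of $G$ by replacing each edge $e$ with two parallel unit-capacity arcs of cost $c_e$ and $c_e + a_e$, and compute a minimum-cost integer flow of value $2$ from $s$ to $t$, which is polynomial-time solvable in both the directed and the undirected setting. By the convexity just noted, every min-cost flow routes the cheap copy of $e$ before the expensive one, so an edge carrying flow $1$ is charged $c_e$ and an edge carrying flow $2$ is charged $2c_e + a_e$, matching the rewritten objective. I would then prove the correspondence in both directions: given any pair $(P_1,P_2)$, the sum of their unit flows is a feasible flow of value $2$ whose network cost equals $f(P_1,P_2)$; conversely, any integer flow of value $2$ decomposes into $s$-$t$-paths and cycles, and since all costs are nonnegative the cycles can be discarded, leaving exactly two $s$-$t$-paths (the arcs have unit capacity, so a value-$2$ flow yields two arc-disjoint unit paths) whose cost is the flow cost. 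Consequently the optimal flow value equals $\min_{P_1,P_2} f(P_1,P_2)$, and decomposing an optimal flow recovers an optimal pair.

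The hard part will be the undirected case, where two paths may traverse a common edge in opposite directions: such an edge lies in $P_1 \cap P_2$ and should contribute $2c_e + a_e$ to $f$, yet it carries net flow $0$, so the flow model would undercount its cost. To close this gap I would use an uncrossing argument. If $P_1$ uses $e = \{u,v\}$ as $u \to v$ and $P_2$ uses it as $v \to u$, I follow $P_1$ up to $u$ and then continue along the tail of $P_2$ leaving $u$, and symmetrically follow $P_2$ up to $v$ and continue along the tail of $P_1$ leaving $v$. The two resulting walks avoid $e$ entirely and together use exactly the original edge multiset minus both traversals of $e$; shortcutting them to simple paths only deletes further edges. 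Since $a,c \geq 0$, this operation never increases $f$, so there is an optimal solution in which no shared edge is traversed in opposite directions, and such a solution is faithfully represented by the flow model. Combining this with the directed case (where the issue does not arise) shows that the min-cost flow of value $2$ computes the optimum in polynomial time for both graphs and digraphs.
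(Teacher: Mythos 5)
Your proposal is correct and takes essentially the same approach as the paper: the paper's proof uses exactly this construction (set $b(s)=2$, $b(t)=-2$, double each edge $e$ into parallel copies of cost $c_e$ and $c_e+a_e$, compute a minimum-cost flow of value $2$, and decompose it into two path flows). The paper states the cost correspondence in a single line; your cycle-discarding step and the uncrossing argument for undirected edges traversed in opposite directions supply details that the paper's terse proof glosses over.
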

\begin{proof}
    We reduce to a minimum cost flow problem. Set $b(s) = 2$ and $b(t) = -2$ and
    double each edge/arc $e \in E$ to two versions $e_{1}, e_{2}$ with
    $\tilde{c}_{e_{1}} = c_e$ and $\tilde{c}_{e_{2}} = a_e + c_e$. Now a
    minimum cost flow in this network will be integral and can be decomposed into two path flows, each sending one unit from $s$ to $t$. The cost of the flow
    corresponds to the cost of these two paths in our problem.
\end{proof}

\begin{theorem}
    Given a graph or digraph $G$, $\copic(\paths_{s, t}(G), \paths_{s, t}(G),
    \diag(\infty), c, d)$ is NP-hard.
\end{theorem}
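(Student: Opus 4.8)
The plan is to prove NP-hardness by reducing from the \emph{arbitrary-terminal} version of the problem, namely $\copic(\paths_{s_1, t_1}(G), \paths_{s_2, t_2}(G), \diag(\infty), c, d)$, which the preceding corollaries establish to be NP-hard in both settings: for undirected graphs (with $d = 0$, via Eilam-Tzoreff's two edge-disjoint one shortest path problem) and for digraphs (with $c = d = 0$, via the directed edge-disjoint paths problem). The whole point is that the symmetric case $c = d$ is polynomial by the previous theorem, so the hardness must be extracted from the asymmetry between the two cost vectors. A super-source/super-sink gadget carrying asymmetric big-$M$ penalties will let me simulate distinct terminals while retaining a single common source $s$ and a single common sink $t$.

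Given such an arbitrary-terminal instance on $G$, I would first build $G'$ by adjoining a new source $s$, a new sink $t$, and four connector edges $e_1 = (s, s_1)$, $e_2 = (t_1, t)$, $e_3 = (s, s_2)$, $e_4 = (t_2, t)$ (oriented as $s \to s_i$ and $t_i \to t$ in the directed case, undirected otherwise). Fix a threshold $M$ strictly larger than the total of all edge costs under both $c$ and $d$. On the original edges I keep the costs unchanged, $c'_e = c_e$ and $d'_e = d_e$. On the connectors I install asymmetric penalties: $c'_{e_1} = c'_{e_2} = 0$ but $d'_{e_1} = d'_{e_2} = M$, and dually $d'_{e_3} = d'_{e_4} = 0$ but $c'_{e_3} = c'_{e_4} = M$. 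Intuitively, the $s_1$-$t_1$ connectors are free for the first path (scored by $c$) and forbidden for the second, while the $s_2$-$t_2$ connectors are free for the second path (scored by $d$) and forbidden for the first.

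Next I would argue that any solution of cost below $M$ is forced into the intended shape. Since the only edges incident to $s$ are $e_1, e_3$ and the only edges incident to $t$ are $e_2, e_4$, the first path $P_1$ must enter through $e_1$ or $e_3$ and leave through $e_2$ or $e_4$; because $c'_{e_3} = c'_{e_4} = M$ exceeds the threshold, $P_1$ is forced to use $e_1$ and $e_2$, so its interior is an $s_1$-$t_1$-path in $G$. Symmetrically $P_2$ is forced to use $e_3, e_4$, so its interior is an $s_2$-$t_2$-path in $G$. The four connectors are pairwise distinct and split as $\{e_1, e_2\}$ for $P_1$ and $\{e_3, e_4\}$ for $P_2$, so the only possible edge conflict between $P_1$ and $P_2$ occurs inside $G$; hence the infinite diagonal penalty $\diag(\infty)$ enforces exactly edge-disjointness of the two interior paths, matching the feasibility constraint of the source instance. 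The objective decomposes as $c'(P_1) + d'(P_2) = c(\text{interior of }P_1) + d(\text{interior of }P_2)$, so optimal values coincide and the correspondence between interior paths and $s$-$t$-paths is a cost-preserving bijection on the sub-$M$ regime. A polynomial algorithm for $\copic(\paths_{s,t}(G'), \paths_{s,t}(G'), \diag(\infty), c', d')$ would therefore solve the arbitrary-terminal instance, which gives the claim.

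The main obstacle I anticipate is purely in verifying that the asymmetric penalties rigidly enforce the routing --- in particular ruling out degenerate solutions in which a path revisits a connector or in which $P_1$ and $P_2$ swap the two terminal pairs --- and in checking that edge-disjointness in $G'$ collapses exactly to edge-disjointness in $G$ once the path-specific connector sets are accounted for. Choosing $M$ larger than every attainable finite objective makes these verifications routine, and because the construction and the argument are insensitive to edge orientation, the single gadget covers both the graph and digraph cases simultaneously.
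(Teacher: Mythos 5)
Your construction is the paper's own: the paper likewise adjoins new terminals $s,t$ and the four connector arcs/edges $(s,s_1),(s,s_2),(t_1,t),(t_2,t)$ with asymmetric $0/\infty$ linear costs (your big-$M$) to force $S_i$ to be an $s_i$-$t_i$-path, with $\diag(\infty)$ enforcing edge-disjointness. The only cosmetic difference is that you reduce from the already-established arbitrary-terminal COPIC corollaries (keeping the original costs $c,d$), whereas the paper inlines the underlying reductions --- from directed two disjoint paths in the digraph case, and from Eilam-Tzoreff's two edge-disjoint one shortest paths problem in the undirected case (setting $c_e=1$, $d_e=0$ and checking the resulting length against the shortest $s_1$-$t_1$-path) --- so the composed reduction chains coincide.
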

\begin{proof}
    For digraphs the statement follows from a reduction from directed two disjoint paths. Given such
    an instance we introduce the new terminals $s$ and $t$ and add arcs $(s,
    s_{1}),$ $(s, s_{2}),$ $(t_{1}, t),$ $(t_{2}, t)$. We use $Q = \diag(\infty)$ and
    as linear costs $c_{(s, s_{1})} = c_{(t_{1}, t)} = d_{(s, s_{2})} = d_{(t_{2}, t)} = 0$ and $c_{(s, s_{2})} = c_{(t_{2}, t)} = d_{(s, s_{1})} = d_{ (t_{1}, t)} = \infty$ and $c_e = d_e = 0$
    for all other edges. This enforces that paths $S_{i}$ are
    $s_{i}$-$t_{i}$-paths and the diagonal matrix with infinite entries ensures
    disjointness.

    In the undirected case we apply the same construction as above but using the undirected two edge-disjoint one shortest paths problem. 
	To solve the decision problem analyzed by Eilam-Tzoreff~\cite{eilam1998disjoint}, we create COPIC with
    $c_e = 1$ and $d_e = 0$ for all the edges in the original network to
    enforce that $S_{1}$ is a shortest path. After finding a finite cost
    solution to this problem we check if the length of $S_{1}$ is equal to the
    length of a shortest $s_{1}$-$t_{1}$-path in $G$.
\end{proof}

\section{Linearizable instances}\label{sec:lin}

In this section we explore for which cost matrices COPIC leads to an equivalent problem where there is essentially no interaction between two structures of COPIC.  

More precisely, we say that an interaction cost matrix $Q$ of a COPIC is \emph{linearizable}, if there exist vectors $a=(a_i)$ and $b=(b_i)$ such that for all $S_1\in\f_1$ and $S_2\in\f_2$ 
\[
	\sum_{i\in S_1}\sum_{j\in S_2}q_{ij}=\sum_{i\in S_1}a_i+\sum_{j\in S_2}b_j
\]
holds. In that case we say that the pair of vectors $a$ and $b$ together is a \emph{linearization} of $Q$.

Note that for an instance $\copic(\f_1,\f_2,Q,c,d)$, $f(S_1,S_2)=\sum_{i\in S_1}\bar{a}_i+\sum_{j\in S_2}\bar{b}_j$ for some $\bar{a}=(\bar{a}_i)$, $\bar{b}=(\bar{b}_i)$ and all $S_1\in \f_1$, $S_2\in \f_2$, if and only if $Q$ is linearizable. Hence, we extend our notion of linearizability and say that an instance $\copic(\f_1,\f_2,Q,c,d)$ is linearizable if and only if $Q$ is linearizable. 
Our aim is to characterize all linearizable instances of  COPIC, with respect to given solution sets $\f_1$ and $\f_2$.

Linearizable instances have been studied by various authors for the case of quadratic assignment problem \cite{CDW16, KP11, PK13}, quadratic spanning tree problem \cite{CP15} and bilinear assignment problem \cite{CSPB16}. Here we generalize the ideas from \cite{CSPB16} and suggest an approach for finding a characterization of linearizable instances of COPIC's.

An interaction cost matrix $Q$ of a COPIC has \emph{constant objective property with respect to $\f_1$} if for every $j\in [n]$ there exist a constant $K^{(1)}_j$, so that
\[
	\sum_{i\in S_1}q_{ij}=K^{(1)}_j \qquad \text{ for all }\ S_1\in\f_1.
\]
Similarly, $Q$ has \emph{constant objective property with respect to $\f_2$}  if for every $i\in [m]$ there exist a constant $K^{(2)}_i$, so that
\[
	\sum_{j\in S_2}q_{ij}=K^{(2)}_i \qquad \text{ for all }\ S_2\in\f_2.
\]
For $\f_i$, $i=1,2$, let  $\text{CVP}_i(\f_i$) be
the vector space of all matrices with constant objective property with respect to $\f_i$.


Combinatorial optimization problems with constant objective property have been studied by various authors \cite{Berenguer79, Burkard07, CusticKlinz16, Kaveh10}.

Let $\text{CVP}_1(\f_1)+\text{CVP}_2(\f_2)$ be the vector space of all interaction matrices $Q=(q_{ij})$ of COPIC, such that $q_{ij}=a_{ij}+b_{ij}$ $\forall i,j$, for some $A=(a_{ij})\in \text{CVP}_1(\f_1)$ and  $B=(b_{ij})\in \text{CVP}_2(\f_2)$.

\begin{lemma}[Sufficient conditions]\label{lm:sufLin}
If the interaction cost matrix $Q$ of $\copic(\f_1,\f_2,Q,c,d)$ is an element of $\text{CVP}_1(\f_1)+\text{CVP}_2(\f_2)$, then $Q$ is linearizable.
\end{lemma}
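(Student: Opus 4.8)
The plan is to unfold the hypothesis $Q \in \text{CVP}_1(\f_1) + \text{CVP}_2(\f_2)$ into an explicit decomposition and then evaluate the objective double sum directly, using the constant objective property of each summand to collapse one of the two inner sums in each piece. First I would write $Q = A + B$ with $A = (a_{ij}) \in \text{CVP}_1(\f_1)$ and $B = (b_{ij}) \in \text{CVP}_2(\f_2)$, which is exactly what membership in the sum space provides. By the definition of $\text{CVP}_1(\f_1)$, for every $j \in [n]$ there is a constant $\alpha_j$ with $\sum_{i \in S_1} a_{ij} = \alpha_j$ for all $S_1 \in \f_1$; symmetrically, by the definition of $\text{CVP}_2(\f_2)$, for every $i \in [m]$ there is a constant $\beta_i$ with $\sum_{j \in S_2} b_{ij} = \beta_i$ for all $S_2 \in \f_2$.

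Next I would split the interaction term along $q_{ij} = a_{ij} + b_{ij}$ and handle the two pieces separately. For the $A$-piece I would interchange the (finite) order of summation so that $\sum_{i \in S_1}\sum_{j \in S_2} a_{ij} = \sum_{j \in S_2}\bigl(\sum_{i \in S_1} a_{ij}\bigr) = \sum_{j \in S_2} \alpha_j$; the crucial point is that $\alpha_j$ is independent of the choice of $S_1$, so the inner sum is a genuine constant for each fixed $j$. For the $B$-piece the order is already convenient, giving $\sum_{i \in S_1}\sum_{j \in S_2} b_{ij} = \sum_{i \in S_1} \beta_i$, again because $\beta_i$ does not depend on $S_2$. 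Summing the two contributions yields $\sum_{i \in S_1}\sum_{j \in S_2} q_{ij} = \sum_{i \in S_1} \beta_i + \sum_{j \in S_2} \alpha_j$, valid simultaneously for every $S_1 \in \f_1$ and $S_2 \in \f_2$.

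Finally I would read off the linearization: setting $a_i := \beta_i$ for $i \in [m]$ and $b_j := \alpha_j$ for $j \in [n]$ matches the required identity $\sum_{i \in S_1}\sum_{j \in S_2} q_{ij} = \sum_{i \in S_1} a_i + \sum_{j \in S_2} b_j$, so $Q$ is linearizable by definition. The argument is essentially a bookkeeping verification with no structural obstruction; the only point demanding care is matching each collapsed constant to the correct side of the linearization, namely that the $\f_1$-constants $\alpha_j$ (indexed by $j$) become the $b$-vector over $[n]$ while the $\f_2$-constants $\beta_i$ (indexed by $i$) become the $a$-vector over $[m]$, and that the interchange of summation is applied to the $A$-summand rather than the $B$-summand. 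This indexing correspondence is the whole substance of the lemma.
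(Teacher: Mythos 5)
Your proof is correct and follows essentially the same route as the paper's: decompose $Q$ into a $\text{CVP}_1(\f_1)$-part and a $\text{CVP}_2(\f_2)$-part, exchange the order of summation in the first part so each inner sum collapses to its constant, and read off the linearization with the $\f_2$-constants $\beta_i$ as the $a$-vector and the $\f_1$-constants $\alpha_j$ as the $b$-vector (the paper's $a_i = K^{(2)}_i$, $b_j = K^{(1)}_j$). No gaps; the indexing correspondence you flag at the end is handled exactly as in the paper.
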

\begin{proof}
	Let $Q$ be of the form $Q=E+F$, where $E=(e_{ij})\in \text{CVP}_1(\f_1)$ and $F=(f_{ij})\in \text{CVP}_2(\f_2)$. Then 
\begin{align*}
	\sum_{i\in S_1}\sum_{j\in S_2}q_{ij} & = \sum_{i\in S_1}\sum_{j\in S_2} \left(e_{ij}+f_{ij}\right)\\
		& = \sum_{j\in S_2}\left(\sum_{i\in S_1} e_{ij}\right)+ \sum_{i\in S_1}\left(\sum_{j\in S_2} f_{ij}\right)\\
		& = \sum_{j\in S_2} K^{(1)}_j+ \sum_{i\in S_1}K^{(2)}_i.
\end{align*}
Hence $Q$ is linearizable, and $a=(a_i)$, $b=(b_j)$ with $a_i=K^{(2)}_i$, $b_j=K^{(1)}_j$ is a linearization of $Q$.
\end{proof}

Now we show that the opposite direction is also true, provided some additional conditions are satisfied. In fact, these additional conditions are satisfied for many well studied combinatorial optimization problems.

\begin{lemma}[Necessary conditions]\label{lm:nesLin}
Let $\f_1\subseteq 2^{[m]}$ and $\f_2\subseteq 2^{[n]}$ be such that:
\begin{enumerate}[label=(\roman*)]
\item There exist an $m$ vector $a=(a_i)$, an $n$ vector $b=(b_j)$ and two non-zero constants $K_a,K_b$, such that 
	\[\sum_{i\in S_1}a_i=K_a  \ \ \forall S_1\in \f_1\  \ \text{ and } \ \ \sum_{j\in S_2}b_j=K_b \ \ \forall S_2\in \f_2.
\]
\item If an $m\times n$ matrix $\bar{Q}=(\bar{q}_{ij})$ is such that            $\sum_{i\in S_1}\sum_{j\in S_2} \bar{q}_{ij}=0$ for all $S_1\in\f_1$, $S_2\in\f_2$, then $\bar{Q}\in \text{CVP}_1(\f_1)+\text{CVP}_2(\f_2)$.
\end{enumerate}
If $\copic(\f_1,\f_2,Q,c,d)$ is linearizable, then $Q\in \text{CVP}_1(\f_1)+\text{CVP}_2(\f_2)$.
\end{lemma}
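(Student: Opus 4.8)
The plan is to use the linearization of $Q$ to split its contribution into the two one-sided linear terms, realize each of those terms as a rank-one matrix lying in one of the two constant-objective spaces (this is where hypothesis (i) is used), and then apply hypothesis (ii) to the leftover matrix, whose double sum vanishes by construction.

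Concretely, since $\copic(\f_1,\f_2,Q,c,d)$ is linearizable there are vectors $\alpha=(\alpha_i)$ and $\beta=(\beta_j)$ with
\[
    \sum_{i\in S_1}\sum_{j\in S_2}q_{ij}=\sum_{i\in S_1}\alpha_i+\sum_{j\in S_2}\beta_j \qquad \text{for all } S_1\in\f_1,\ S_2\in\f_2.
\]
First I would build matrices reproducing these two linear terms as double sums. Using the vectors $a,b$ and the \emph{nonzero} constants $K_a,K_b$ from (i), define $E=(e_{ij})$ and $F=(f_{ij})$ by $e_{ij}=a_i\beta_j/K_a$ and $f_{ij}=\alpha_i b_j/K_b$. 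The key point is that normalizing by the nonzero constants makes the relevant one-sided sums come out constant: for each fixed $j$ one has $\sum_{i\in S_1}e_{ij}=(\beta_j/K_a)\sum_{i\in S_1}a_i=\beta_j$ independent of $S_1$, so $E\in\text{CVP}_1(\f_1)$, and symmetrically $\sum_{j\in S_2}f_{ij}=\alpha_i$ for each fixed $i$, so $F\in\text{CVP}_2(\f_2)$. Moreover the full double sums give $\sum_{i\in S_1}\sum_{j\in S_2}e_{ij}=\sum_{j\in S_2}\beta_j$ and $\sum_{i\in S_1}\sum_{j\in S_2}f_{ij}=\sum_{i\in S_1}\alpha_i$.

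Next I would set $\bar{Q}=Q-E-F$ and verify the hypothesis of condition (ii): by the displayed identities,
\[
    \sum_{i\in S_1}\sum_{j\in S_2}\bar{q}_{ij}=\Big(\sum_{i\in S_1}\alpha_i+\sum_{j\in S_2}\beta_j\Big)-\sum_{j\in S_2}\beta_j-\sum_{i\in S_1}\alpha_i=0
\]
for every $S_1\in\f_1$, $S_2\in\f_2$. Hence (ii) yields $\bar{Q}\in\text{CVP}_1(\f_1)+\text{CVP}_2(\f_2)$, and since $E\in\text{CVP}_1(\f_1)$, $F\in\text{CVP}_2(\f_2)$, and the target set is a vector space, $Q=\bar{Q}+E+F\in\text{CVP}_1(\f_1)+\text{CVP}_2(\f_2)$, as required.

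The only genuine difficulty is the construction of $E$ and $F$; the rest is bookkeeping. The subtle and essential role of hypothesis (i) is twofold: the constants $K_a,K_b$ must be nonzero so that we may normalize by them, and the vectors $a,b$ must give \emph{exactly} constant sums, which is precisely what converts the two one-sided linear terms $\sum_{i\in S_1}\alpha_i$ and $\sum_{j\in S_2}\beta_j$ into honest members of $\text{CVP}_2(\f_2)$ and $\text{CVP}_1(\f_1)$, respectively. Condition (ii) then does the substantive work of certifying that the zero-double-sum remainder decomposes across the two constant-objective spaces.
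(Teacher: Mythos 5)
Your proposal is correct and follows essentially the same route as the paper: subtract from $Q$ matrices realizing the two linear terms of the linearization, observe that the remainder has vanishing double sums, and apply hypothesis (ii). The only difference is cosmetic --- the paper merely asserts (from hypothesis (i)) the existence of matrices $\hat{E}\in\text{CVP}_1(\f_1)$, $\hat{F}\in\text{CVP}_2(\f_2)$ with the required one-sided sums, whereas you write them out explicitly as the rank-one matrices $e_{ij}=a_i\beta_j/K_a$ and $f_{ij}=\alpha_i b_j/K_b$, which is exactly the construction the paper has in mind.
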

\begin{proof}
	Assume that the conditions $(i)$ and $(ii)$ of Lemma~\ref{lm:nesLin} are satisfied, and that $Q$ is linearizable. We will show that $Q\in \text{CVP}_1(\f_1)+\text{CVP}_2(\f_2)$ by reconstructing the proof of Lemma~\ref{lm:sufLin} in reverse direction.

Since $Q$ is linearizable, there exist $a=(a_i)$ and $b=(b_j)$ such that 
\begin{equation}\label{eq:nes11}
\sum_{i\in S_1}\sum_{j\in S_2}q_{ij}=\sum_{i\in S_1} a_i + \sum_{j\in S_2}b_j \quad \forall S_1\in \f_i, S_2\in \f_2.
\end{equation}
Note that from $(i)$ it follows that there exist matrices $\hat{E}=(\hat{e}_{ij})\in \text{CVP}_1(\f_1)$ and $\hat{F}=(\hat{f}_{ij})\in \text{CVP}_2(\f_2)$ such that
\begin{equation}\label{eq:nes22}
	\sum_{j\in S_2}\hat{f}_{ij}=a_i \qquad \forall S_2\in\f_2,\ i\in M,
\end{equation}
\begin{equation}\label{eq:nes33}
	\sum_{i\in S_i}\hat{e}_{ij}=b_j \qquad \forall S_1\in\f_1,\ j\in N.
\end{equation}
Using \eqref{eq:nes22} and \eqref{eq:nes33}, we can rewrite \eqref{eq:nes11} as
\begin{align}
	\sum_{i\in S_1}\sum_{j\in S_2}q_{ij} & = \sum_{i\in S_1}\left( \sum_{j\in S_2}\hat{f}_{ij} \right) + \sum_{j\in S_2}\left( \sum_{i\in S_1}\hat{e}_{ij} \right)\nonumber\\
	& = \sum_{i\in S_1}\sum_{j\in S_2} \left(\hat{e}_{ij}+\hat{f}_{ij}\right)
\end{align}
for all $S_1\in\f_1$, $S_2\in\f_2$. Hence it follows that 
\begin{equation}
	\sum_{i\in S_1}\sum_{j\in S_2}\left(q_{ij}-(\hat{e}_{ij}+\hat{f}_{ij})\right)=0 \quad \forall S_1\in \f_i, S_2\in \f_2.
\end{equation}
Now, from $(ii)$ it follows that $Q-(\hat{E}+\hat{F})=E+F$ for some $E\in \text{CVP}_1(\f_1)$, $F\in \text{CVP}_2(\f_2)$, and hence, $Q=(E+\hat{E})+(F+\hat{F})\in \text{CVP}_1(\f_1) + \text{CVP}_2(\f_2)$. 
\end{proof}

From Lemma~\ref{lm:sufLin} and Lemma~\ref{lm:nesLin} it follows that $\text{CVP}_1(\f_1) + \text{CVP}_2(\f_2)$ is the set of all linearizable matrices, provided that the corresponding COPIC satisfies properties $(i)$ and $(ii)$ of Lemma~\ref{lm:nesLin}.

In most cases, property $(i)$ is straightforward  to check. For example, it is true for all COPIC's for which elements of $\f_1$ and $\f_2$ are of fixed cardinality. If $\f_1$ and $\f_2$ are $s$-$t$ paths in a graph, then again property $(i)$ is satisfied, although feasible solutions are of different cardinality. Condition $(i)$ is not satisfied for unconstrained solution sets, i.e., when $\f_1$ ($\f_2$) is $2^{[m]}$ ($2^{[n]}$).
\medskip

Now we show how Lemma~\ref{lm:sufLin} and Lemma~\ref{lm:nesLin} can be used to characterize linearizable instances for some specific COPIC's. In particular, we consider unconstrained solution sets $2^{[m]}$, bases of the uniform matroids $\base(\umat{m}{k})$, spanning trees of a complete graph $\base(\mathcal{M}(K_m))$ and perfect matchings of a complete bipartite graph $\pmatch(K_{m,m})$. For the case of  $\pmatch(K_{m,m})$ the set $[m]\times [m]$ will be our set of edges of the perfect bipartite graph $K_{m,m}$. 
Hence, in the case of $\copic(\f_1,\f_2,Q,c,d)$ where $\f_i=\pmatch(K_{m,m})$, the dimensions (number of indices) of the cost arrays $Q$ and $c$ or $d$ is increased by one, however our lemmas and $\text{CVP}_i(\pmatch(K_{m,m}))$ remain to be well defined.

\begin{theorem}\
\begin{enumerate}[label=(\roman*)]
	\item $\copic(\pmatch(K_{m,m}),\pmatch(K_{n,n}),Q,c,d)$ is linearizable if and only if there are some arrays $A$, $B$, $C$, $D$ such that $q_{ijk\ell}=a_{ijk}+b_{ij\ell}+c_{ik\ell}+d_{jk\ell}$.
	\item $\copic(\base(\mathcal{M}(K_m)),\base(\mathcal{M}(K_n)),Q,c,d)$ is linearizable if and only if there are some vectors $a$, $b$ such that $q_{ij}=a_{i}+b_{j}$.
	\item $\copic(\base(\umat{m}{k_1}),\base(\umat{n}{k_2}),Q,c,d)$ is linearizable if and only if there are some vectors $a$, $b$ such that $q_{ij}=a_{i}+b_{j}$.
	\item $\copic(\pmatch(K_{m,m}),\base(\mathcal{M}(K_n)),Q,c,d)$ is linearizable if and only if there are some arrays $A$, $B$, $C$ such that $q_{ijk}=a_{ij}+b_{ik}+c_{jk}$.
	\item $\copic(\base(\mathcal{M}(K_m)) ,\base(\umat{n}{k}) ,Q,c,d)$ is linearizable if and only if there are some vectors $a$, $b$ such that $q_{ij}=a_{i}+b_{j}$.
	\item $\copic(\pmatch(K_{m,m}) ,\base(\umat{n}{s}) ,Q,c,d)$ is linearizable if and only if there are some arrays $A$, $B$, $C$ such that $q_{ijk}=a_{ij}+b_{ik}+c_{jk}$.
\end{enumerate}
\end{theorem}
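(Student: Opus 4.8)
The plan is to handle all six parts uniformly through Lemmas~\ref{lm:sufLin} and~\ref{lm:nesLin}. Together these say that, as soon as a pair $(\f_1,\f_2)$ satisfies hypotheses $(i)$ and $(ii)$ of Lemma~\ref{lm:nesLin}, the linearizable matrices are exactly the elements of $\text{CVP}_1(\f_1)+\text{CVP}_2(\f_2)$. So the theorem reduces to (a) checking $(i)$ and $(ii)$ for each pair, and (b) computing $\text{CVP}_1(\f_1)+\text{CVP}_2(\f_2)$ and matching it to the stated decomposition. Condition $(i)$ is immediate: every family here has fixed cardinality (a perfect matching of $K_{m,m}$ uses $m$ edges, a spanning tree of $K_n$ uses $n-1$ edges, a base of $\umat{n}{k}$ uses $k$ elements), so $a=\mathbf{1}$ and $b=\mathbf{1}$ give the positive constants $K_a,K_b$ required by $(i)$.

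I would discharge condition $(ii)$ once and for all, independently of the particular families. Writing incidence vectors $x,y$ for $S_1,S_2$, the hypothesis of $(ii)$ reads $x^{T}\bar Q\,y=0$ for every feasible $x,y$. A linear functional vanishing on a set vanishes on its affine hull, so fixing $y$ and varying $x$ yields $(x-x')^{T}\bar Q\,y=0$, and then varying $y$ yields $(x-x')^{T}\bar Q\,(y-y')=0$ for all feasible $x,x',y,y'$. Hence $\bar Q$ vanishes on $L_1\times L_2$, where $L_i$ is the span of differences of incidence vectors of $\f_i$. Taking orthogonal decompositions $\r^m=L_1\oplus L_1^{\perp}$ and $\r^n=L_2\oplus L_2^{\perp}$ with projections $P_1,P_2$, this is precisely $P_1\bar Q P_2=0$, whence $\bar Q=P_1^{\perp}\bar Q+P_1\bar Q P_2^{\perp}$; the first summand has every column in $L_1^{\perp}$ and so lies in $\text{CVP}_1(\f_1)$, the second has every row in $L_2^{\perp}$ and so lies in $\text{CVP}_2(\f_2)$. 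Thus $(ii)$ holds for all six pairs, and the substance of the theorem is the CVP computation.

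The core step is to describe $\text{CVP}(\f)$ for the three building blocks, which reduces to characterizing the weight vectors whose $\f$-sum is the same over every feasible set (equivalently, the space $L^{\perp}$). For $\base(\umat{m}{k})$, a single-element swap between two $k$-sets forces any two coordinates to agree, so these are exactly the constant vectors. For $\base(\M(K_m))$, a fundamental-cycle exchange gives $w_e=w_f$ for every exchangeable pair, and since the graphic matroid of $K_m$ is connected this again forces all entries equal. Consequently, for both of these blocks $\text{CVP}$ consists of arrays that are constant in the block's own index. For $\pmatch(K_{m,m})$ I would use the classical assignment-linearization fact: a weight vector on the edges of $K_{m,m}$ is constant over all perfect matchings iff it has the form $u_i+v_j$. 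I would prove this via the quadrangle identity $w_{ij}+w_{k\ell}=w_{i\ell}+w_{kj}$ coming from a single $2\times 2$ transposition of a matching, and then set $v_j=w_{1j}$ and $u_i=w_{i1}-w_{11}$.

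Finally I would add the two CVP spaces in each case and relabel the potentials. Parts $(ii),(iii),(v)$ combine two ``constant-in-own-index'' spaces into $q_{ij}=a_i+b_j$; parts $(iv),(vi)$ combine a matching space $u_{ik}+v_{jk}$ with a constant-in-index space to give $q_{ijk}=a_{ij}+b_{ik}+c_{jk}$; and part $(i)$ combines $u_{ik\ell}+v_{jk\ell}$ with $s_{ijk}+t_{ij\ell}$ into $q_{ijk\ell}=a_{ijk}+b_{ij\ell}+c_{ik\ell}+d_{jk\ell}$. In each equivalence the direction $\Leftarrow$ is Lemma~\ref{lm:sufLin} and $\Rightarrow$ is Lemma~\ref{lm:nesLin}. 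I expect the main obstacle to be the matching case: pinning down the $u_i+v_j$ characterization of $\text{CVP}(\pmatch)$ rigorously (in particular verifying that the transposition exchanges reach every edge pair, so that the quadrangle identity really forces the additive form) and then carefully bookkeeping the four-index decomposition of part $(i)$, which is exactly where the argument generalizes the bilinear-assignment analysis of~\cite{CSPB16}.
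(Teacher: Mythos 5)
Your proposal is correct, and it reaches the theorem by a genuinely different route than the paper in the one place that matters. Both you and the paper work inside the frame of Lemma~\ref{lm:sufLin} and Lemma~\ref{lm:nesLin} and both need the same single-family facts (constant objective property over $\pmatch(K_{m,m})$ means $w_{ij}=u_i+v_j$, over $\base(\M(K_n))$ or $\base(\umat{n}{k})$ means a constant vector), but you diverge on how hypothesis $(ii)$ of Lemma~\ref{lm:nesLin} is discharged. The paper verifies $(ii)$ separately for each pair of families by a combinatorial swap computation on the interaction array itself: it picks $S_{PM}',S_{PM}''$ differing in a $2\times 2$ transposition and $S_{ST}',S_{ST}''$ differing in one element, derives the identity
\begin{equation*}
q_{111}+q_{ij1}+q_{1jk}+q_{i1k}=q_{11k}+q_{ijk}+q_{1j1}+q_{i11},
\end{equation*}
and then writes down explicit potentials such as $a_{ij}=q_{ij1}-\tfrac{1}{2}q_{1j1}-\tfrac{1}{2}q_{i11}+\tfrac{1}{3}q_{111}$. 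Your projection argument instead proves $(ii)$ once and for all: if $x^{T}\bar Q y=0$ on $\f_1'\times\f_2'$ then bilinearity gives $P_1\bar Q P_2=0$, and the decomposition $\bar Q=(I-P_1)\bar Q+P_1\bar Q(I-P_2)$ has its first term in $\text{CVP}_1(\f_1)$ (columns orthogonal to all differences of incidence vectors) and its second in $\text{CVP}_2(\f_2)$ (rows orthogonal to all differences); I checked this and it is airtight, with no property of the families used anywhere. So your argument shows that hypothesis $(ii)$ of Lemma~\ref{lm:nesLin} is in fact redundant --- it holds for \emph{every} pair $(\f_1,\f_2)$ --- which is a strengthening the paper does not observe; with it, ``linearizable iff $Q\in\text{CVP}_1(\f_1)+\text{CVP}_2(\f_2)$'' holds whenever the fixed-cardinality condition $(i)$ does, and each of the six cases reduces purely to computing the two CVP spaces (your quadrangle-identity and exchange arguments do this correctly, and your relabelled sums match all six stated forms, including the four-potential form in part (i)). What the paper's route buys in exchange is constructiveness: it produces closed-form potentials directly from the entries of $Q$, whereas your $E$ and $F$ come from orthogonal projections. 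One shared caveat, affecting the theorem itself rather than either proof: the ``only if'' directions of the single-family CVP characterizations need the exchange moves to exist (e.g.\@ $k_i$ strictly between $0$ and the ground-set size for uniform matroids, $n\geq 3$ for trees), a nondegeneracy assumption both you and the paper leave implicit.
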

\begin{proof}
We present a  complete proof for $(iv)$, and indicate how other statements can be shown analogously. 

In the case of $\copic(\pmatch(K_{m,m}),\base(\mathcal{M}(K_n)),Q,c,d)$, the
interaction costs are represented in a three-dimensional array $Q$, since for
convenience we represent the cost vector of $\f_1=\pmatch(K_{m,m})$ in two indices.
It is well known that a linear assignment problem instance $R=(r_{ij})$ has the
constant objective property if and only if $r_{ij}=s_i+t_j$, for some vectors $s$
and $t$. Hence $\text{CVP}_1(\pmatch(K_{m,m}))=\{A=(a_{ijk})\colon
a_{ijk}=b_{ik}+c_{jk} \text{ for some } B=(b_{ij}),C=(c_{ij})\}$. A spanning tree
problem on a complete graph has the constant objective property if and only if the cost vector is constant, therefore $\text{CVP}_2(\base(\mathcal{M}(K_n)))=\{A=(a_{ijk})\colon a_{ijk}=b_{ij} \text{ for some } B=(b_{ij})\}$. Hence, $Q$ is an element of $\text{CVP}_1(\pmatch(K_{m,m}))+\text{CVP}_2(\base(\mathcal{M}(K_n)))$ if and only if  there are some $A$, $B$ and $C$ such that
\begin{equation}\label{STlin}
	q_{ijk}=a_{ij}+b_{ik}+c_{jk}.
\end{equation} 

Lemma~\ref{lm:sufLin} tells us that \eqref{STlin} is a sufficient condition for $Q$ to be linearizable. To show that it is also a necessary condition, we just need to show that properties $(i)$ and $(ii)$ of Lemma~\ref{lm:nesLin} are true for $\copic(\pmatch(K_{m,m}),\base(\mathcal{M}(K_n)),Q,c,d)$. $(i)$ is obviously true, hence it remains to show that if $Q$ is such that
\[
	\sum_{(i,j)\in S_1}\sum_{k\in S_2} q_{ijk}=0\qquad \forall S_1\in\pmatch(K_{m,m}),\ S_2\in\base(\mathcal{M}(K_n)),
\]
then $Q\in \text{CVP}_1(\pmatch(K_{m,m}))+\text{CVP}_2(\base(\mathcal{M}(K_n)))$.

Let $i,j\in \{2,3,\ldots,m\}$ be fixed, and let $S_{PM}',S_{PM}''\in\pmatch(K_{m,m})$ be such that $S_{PM}'\setminus S_{PM}''=\{(1,1),(i,j)\}$ and $S_{PM}''\setminus S_{PM}'=\{(1,j),(i,1)\}$. 
Further, let $k\in \{2,3,\ldots,n\}$ be fixed, and $S_{ST}',S_{ST}''\in\base(\mathcal{M}(K_n))$ be such that $S_{ST}'\setminus S_{ST}''=\{1\}$ and $S_{ST}''\setminus S_{ST}'=\{k\}$. Note that such $S_{PM}',S_{PM}'',S_{ST}',S_{ST}''$ exist for all $i,j\in \{2,3,\ldots,m\}$, $k\in \{2,3,\ldots,n\}$. 

Let us assume that $Q$ satisfies property $(ii)$ of Lemma~\ref{lm:nesLin}. Then, in particular, we have that
\begin{equation}\label{eqcor0}
	\sum_{(i,j)\in S_{PM}'}\sum_{k\in S_{ST}'} q_{ijk}+\sum_{(i,j)\in S_{PM}''}\sum_{k\in S_{ST}''} q_{ijk}=\sum_{(i,j)\in S_{PM}'}\sum_{k\in S_{ST}''} q_{ijk}+\sum_{(i,j)\in S_{PM}''}\sum_{k\in S_{ST}'} q_{ijk},
\end{equation}
which, after cancellations, gives us
\begin{equation}\label{eqcor1}
	q_{111}+q_{ij1}+q_{1jk}+q_{i1k}=q_{11k}+q_{ijk}+q_{1j1}+q_{i11} 
\end{equation}
for all $i,j\in \{2,3,\ldots,m\}$, $k\in \{2,3,\ldots,n\}$. Note that \eqref{eqcor1} holds true even if $i,j$ or $k$ is equal to $1$, since in that case everything chancels out. Therefore, $q_{ijk}$ can be expressed as
\begin{equation}
q_{ijk}=a_{ij}+b_{ik}+c_{jk} \qquad \forall i,j\in [m],\ \forall k\in [n],
\end{equation}
where 
\[a_{ij}:=q_{ij1}-\frac{1}{2}q_{1j1}-\frac{1}{2}q_{i11}+\frac{1}{3}q_{111},\]
\[b_{ik}:=q_{i1k}-\frac{1}{2}q_{11k}-\frac{1}{2}q_{i11}+\frac{1}{3}q_{111},\]
\[c_{jk}:=q_{1jk}-\frac{1}{2}q_{11k}-\frac{1}{2}q_{1j1}+\frac{1}{3}q_{111},\]
i.e., $Q\in \text{CVP}_1(\pmatch(K_{m,m}))+\text{CVP}_2(\base(\mathcal{M}(K_n)))$. That proves statement $(iv)$ of the theorem.

Statements $(i)$ and $(ii)$ of the theorem can be proved by considering equation \eqref{eqcor0} with two pairs of 
$S_{PM}',S_{PM}''$ for the case of $\copic(\pmatch(K_{m,m}),\pmatch(K_{n,n}),Q,c,d)$, and two pairs of $S_{ST}',S_{ST}''$ for the case of $\copic(\base(\mathcal{M}(K_m)),\base(\mathcal{M}(K_n)),Q,c,d)$. Using analogous approach, the remaining statements of the theorem can be shown.
\end{proof}

As we mentioned before, property $(i)$ of Lemma~\ref{lm:nesLin} does not hold for unconstrained solution set $2^{[m]}$ ($2^{[n]}$), nevertheless, it is not hard to show that $\text{CVP}_1(\f_1) + \text{CVP}_2(\f_2)$ characterizes all linearizable matrices even if $\f_1=2^{[m]}$ or $\f_2=2^{[n]}$.

\begin{theorem}
	$\copic(\f_1,\f_2,Q,c,d)$ with $\f_1=2^{[m]}$ ($\f_2=2^{[n]}$) is linearizable if and only if  $Q\in\text{CVP}_2(\f_2)$ ($Q\in\text{CVP}_1(\f_1)$).
\end{theorem}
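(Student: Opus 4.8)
The plan is to prove both equivalences at once by symmetry: interchanging the roles of $\f_1$ and $\f_2$, of $c$ and $d$, and transposing $Q$ turns the statement for $\f_1=2^{[m]}$ into the statement for $\f_2=2^{[n]}$. So I would prove the case $\f_1=2^{[m]}$ in full and then simply remark that the case $\f_2=2^{[n]}$ follows by the identical argument.

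For sufficiency I would note that $\text{CVP}_2(\f_2)\subseteq\text{CVP}_1(\f_1)+\text{CVP}_2(\f_2)$, since the zero matrix lies in $\text{CVP}_1(\f_1)$, so this implication is already a special case of Lemma~\ref{lm:sufLin}. Spelled out directly: if $Q\in\text{CVP}_2(\f_2)$ then $\sum_{j\in S_2}q_{ij}=K^{(2)}_i$ for every $i\in[m]$ and every $S_2\in\f_2$, whence $\sum_{i\in S_1}\sum_{j\in S_2}q_{ij}=\sum_{i\in S_1}K^{(2)}_i$, which is a linearization with $a_i=K^{(2)}_i$ and $b_j=0$.

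The substantive direction is necessity, and the key idea is to exploit the richness of $\f_1=2^{[m]}$, which contains both the empty set and all singletons. This lets me probe the linearization identity pointwise rather than through the general machinery of Lemma~\ref{lm:nesLin}. Assuming $Q$ is linearizable, fix vectors $a,b$ with $\sum_{i\in S_1}\sum_{j\in S_2}q_{ij}=\sum_{i\in S_1}a_i+\sum_{j\in S_2}b_j$ for all $S_1\in 2^{[m]}$ and $S_2\in\f_2$. First I would substitute $S_1=\emptyset$: both the double sum and the $a$-term vanish, forcing $\sum_{j\in S_2}b_j=0$ for every $S_2\in\f_2$. Then substituting $S_1=\{i\}$ and using this gives $\sum_{j\in S_2}q_{ij}=a_i+\sum_{j\in S_2}b_j=a_i$ for every $S_2\in\f_2$, i.e.\ exactly the constant objective property with respect to $\f_2$ with $K^{(2)}_i=a_i$. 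Hence $Q\in\text{CVP}_2(\f_2)$.

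I do not expect a genuine obstacle: the proof is short precisely because the unconstrained family is flexible enough to isolate individual rows of $Q$ and to annihilate the $b$-vector. The only real subtlety worth flagging is conceptual rather than technical, namely recognizing that the failure of property $(i)$ of Lemma~\ref{lm:nesLin} for $2^{[m]}$ is harmless here: the general necessity argument is simply replaced by the direct empty-set and singleton substitutions, so the unconstrained case need not be subsumed by that lemma at all.
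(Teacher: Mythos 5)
Your proof is correct and follows essentially the same route as the paper's: sufficiency via Lemma~\ref{lm:sufLin} (the zero matrix lying in $\text{CVP}_1(2^{[m]})$), and necessity by probing the linearization identity with $S_1=\emptyset$ to force $\sum_{j\in S_2}b_j=0$ and with singletons $S_1=\{i\}$ to pin down the row sums. The only difference is presentational: you argue necessity directly, whereas the paper runs the same two substitutions as a proof by contradiction.
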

\begin{proof}
	Assume that $\f_1=2^{[m]}$.
	Note that $\text{CVP}_1(2^{[m]})$ contains only the $m\times n$ zero matrix, hence Lemma~\ref{lm:sufLin} implies that elements of $\text{CVP}_2(\f_2)$ are linearizable. 

Now let us assume that $Q$ is linearizable and not an element of $\text{CVP}_2(\f_2)$. Then there must exist some $i'\in [m]$ and $S_2,S_2'\in\f_2$ such that 
$\sum_{j\in S_2}q_{i'j}\neq \sum_{j\in S_2'}q_{i'j}$. 
Let $a=(a_i)$ and $b=(b_i)$ be a linearization of $Q$. Since $\{i'\}\in 2^{[m]}$, we have that
\[
	\sum_{j\in S_2}q_{i'j}=\sum_{i\in \{i'\}}\sum_{j\in S_2}q_{ij}=a_{i'}+\sum_{j\in S_2}b_j,
\]
\[
	\sum_{j\in S_2'}q_{i'j}=\sum_{i\in \{i'\}}\sum_{j\in S_2'}q_{ij}=a_{i'}+\sum_{j\in S_2'}b_j.
\]
Hence, $\sum_{j\in S_2}b_j\neq \sum_{j\in S_2'}b_j$. However, since $\emptyset \in 2^{[m]}$ we have
\[
	0=\sum_{i\in \emptyset}\sum_{j\in S_2}q_{ij}=\sum_{j\in S_2}b_j \quad \text{ and }\quad
	0=\sum_{i\in \emptyset}\sum_{j\in S_2'}q_{ij}=\sum_{j\in S_2'}b_j
\]
which implies that $\sum_{j\in S_2}b_j= \sum_{j\in S_2'}b_j$, a contradiction.
\end{proof}

\section{Conclusion}\label{sec:conclusion}

We introduced a general model to study combinatorial optimization problems with
interaction costs and showed that many classical hard combinatorial optimization problems are special cases. In many cases, interaction costs can be identified as the origin of the
hardness of these problems. Therefore we
considered special structures of interaction costs, and their
impact on the computational complexity of the underlying combinatorial optimization problems. 
We presented a general approach based on multi-parametric programming to solve
instances parametrized with the rank of the interaction cost matrix $Q$.
Complementary to that, we analyzed problems with diagonal interaction cost matrix
$Q$, which can be used to enforce disjointness constraints. Even for
this special type of interaction costs, we can show that for many common sets of
feasible solutions, that have no matroid structure, COPIC becomes NP-hard.
We also identified conditions on the interaction costs so that COPIC
can be reduced to an equivalent instance with no interaction costs.

To further characterize how interaction costs impact the computational
complexity of different combinatorial optimization problems, the following
questions could be addressed.

\begin{enumerate}
    \item Are the polynomially solvable cases of COPIC where matrix $Q$ has
        fixed rank $r$ W[1]-hard?
    \item For cases of COPIC with diagonal matrix that can be efficiently
        solved, analyze the parameterized complexity with respect to the
        bandwith of $Q$.
    \item Can $\copic(\base(\umat{m}{k}), \paths_{s,t}(G), \diag(a), c, d)$ be
        solved in polynomial time, if $a \geq 0, c \geq 0$ and $d \geq 0$?
    \item Is $\copic(\base(\M_{1}), \base(\M_{2}), \diag(a), c, d)$ solvable in
        polynomial time, without any restrictions on $\M_{1}, \M_{2}, a, c$ and
        $d$?
\end{enumerate}

For the case of diagonal COPIC it would be interesting to
study further types of sets of feasible solutions. For example the matching-cut problem
analyzed by Bonsma~\cite{bonsma2009complexity} can be also formulated as a
special case of diagonal COPIC, so analyzing graph cuts as feasible sets in
diagonal COPIC is an interesting candidate for further research.

Additionally, understanding the influence of interaction costs with other special matrix structures,
besides fixed rank and diagonal matrices, to the computational complexity of
combinatorial optimization problems would be of interest.

\bibliographystyle{abbrv}
\bibliography{bipartite-comb}

\end{document}